\numberwithin{equation}{section}
\theoremstyle{plain}
\newtheorem{theorem}{Theorem}[section]
\newtheorem{lemma}[theorem]{Lemma}
\newtheorem{proposition}[theorem]{Proposition}
\newtheorem{corollary}[theorem]{Corollary}
\theoremstyle{definition}
\newtheorem{definition}[theorem]{Definition}
\newtheorem{definitiontheorem}[theorem]{Definition-Theorem}
\newtheorem{definitionproposition}[theorem]{Definition-Proposition}
\newtheorem{notation}[theorem]{Notation}
\newtheorem{remark}[theorem]{Remark}
\newtheorem{question}[theorem]{Question}
\let\c@equation\c@theorem  
\newcommand{\lra}{\longrightarrow}
\newcommand{\Z}{\mathbb{Z}}
\newcommand{\N}{\mathbb{N}}
\newcommand{\kk}{\mathds{k}}
\newcommand{\mc}{\mathcal}
\newcommand{\mf}{\mathfrak}
\newcommand{\wh}{\widehat}
\newcommand{\defeq}{\vcentcolon=}
\newcommand{\tens}{\otimes}
\newcommand{\com}[2]{#1 _ { (#2)}}
\newcommand{\comm}[3]{#1 _ { (#2) (#3)}}
\newcommand{\ord}{\textup{ord}}
\newcommand{\End}{\textup{End}}
\newcommand{\Aut}{\textup{Aut}}
\newcommand{\Id}{\textup{id}}
\newcommand{\blank}{\underline{\hspace{2.5mm}}}
\newenvironment{enum}
	{\begin{enumerate}[ label=\textup{(\alph*)}] }
	{\end{enumerate}}
\def\@tocline#1#2#3#4#5#6#7{\relax
  \ifnum #1>\c@tocdepth 
  \else
    \par \addpenalty\@secpenalty\addvspace{#2}%
    \begingroup \hyphenpenalty\@M
    \@ifempty{#4}{%
      \@tempdima\csname r@tocindent\number#1\endcsname\relax
    }{%
      \@tempdima#4\relax
    }%
    \parindent\z@ \leftskip#3\relax \advance\leftskip\@tempdima\relax
    \rightskip\@pnumwidth plus4em \parfillskip-\@pnumwidth
    #5\leavevmode\hskip-\@tempdima
      \ifcase #1
       \or\or \hskip 1em \or \hskip 2em \else \hskip 3em \fi%
      #6\nobreak\relax
    \dotfill\hbox to\@pnumwidth{\@tocpagenum{#7}}\par
    \nobreak
    \endgroup
  \fi}
\begin{document}

\title[On actions of doubles on finite dimensional algebras]{On actions of Drinfel'd doubles on finite dimensional algebras}

\author{Zachary Cline}
\address{Department of Mathematics, Temple University, Philadelphia, Pennsylvania 19122, USA}
\email{zcline@temple.edu}

\bibliographystyle{abbrv}

\begin{abstract}
      Let $q$ be an $n^{th}$ root of unity for $n > 2$ and let $T_n(q)$ be the Taft (Hopf) algebra of dimension $n^2$.
      In 2001, Susan Montgomery and Hans-J\"urgen Schneider classified all non-trivial $T_n(q)$-module algebra structures on an $n$-dimensional associative algebra $A$.  
      They further showed that each such module structure extends uniquely to make $A$ a module algebra over the Drinfel'd double of $T_n(q)$.
      We explore what it is about the Taft algebras that leads to this uniqueness, by examining actions of (the Drinfel'd double of) Hopf algebras $H$ ``close'' to the Taft algebras on finite-dimensional algebras analogous to $A$ above.
      Such Hopf algebras $H$ include the Sweedler (Hopf) algebra of dimension 4, bosonizations of quantum linear spaces, and the Frobenius-Lusztig kernel $u_q(\mf{sl}_2)$.
\end{abstract}

\subjclass[2010]{
16T05,
81R50,
16P10
}

\keywords{
Drinfel'd double,
module algebra,
Taft algebra,
small quantum group
}

\maketitle

\section{Introduction} \label{sect:intro}

Throughout, let $ \kk $ be an algebraically closed field of characteristic 0. 
The unadorned tensor product, $\tens$, will mean $\tens_\kk$, and all algebraic structures will be over $\kk$ unless otherwise stated.

Our work is first motivated by the problem of finding solutions to the quantum Yang-Baxter equation, which provide a source of link invariants and play a role in the theory of quantum integrable systems \cite{jimboybe, kassel}.
For a vector space $V$, a map $c \in \Aut_\kk(V \tens V)$ is called a solution of the \emph{quantum Yang-Baxter equation} if the identity
\[
  (c \tens \Id_V)(\Id_V \tens c)(c \tens \Id_V) = (\Id_V \tens c)(c \tens \Id_V)(\Id_V \tens c)
\]
holds in $\Aut_\kk(V \tens V \tens V)$.
In \cite{quantumgroups}, Drinfel'd introduced the notion of \emph{quasitriangular} Hopf algebras, whose modules each lead to a solution of the quantum Yang-Baxter equation.
He also introduced the \emph{quantum double} of a finite-dimensional Hopf algebra $H$ (now called the \emph{Drinfel'd double} of $H$), denoted $D(H)$, which is a canonical quasitriangular Hopf algebra in which $H$ embeds. 
Thus, modules of a finite-dimensional Hopf algebra $H$ which admit an extension to the structure of a $D(H)$-module give solutions of the quantum Yang-Baxter equation.

Towards the second motivation of our work, take $A$ to be an associative algebra, and recall that an action of a group $G$ on $A$ by algebra automorphisms captures the symmetry of $A$ in a sense. 
However, we can capture more symmetry of $A$ if we consider more generally actions of a Hopf algebra $H$ on $A$ so that $A$ arises as an \emph{$H$-module algebra}.
For example, along with automorphisms, such an action can capture derivations of $A$; both group algebras and universal enveloping algebras of a Lie algebra are key examples of Hopf algebras.
Even more symmetry of $A$ can be investigated if that action can be extended so that $A$ admits a non-trivial $D(H)$-module algebra structure.

Thus, for the sake of both studying symmetries of associative algebras and for finding solutions of the quantum Yang-Baxter equation, we are interested in the question of when actions of a finite-dimensional Hopf algebra $H$ on $A$ leads to a non-trivial action of $D(H)$ on $A$.
In particular, we explore the question of when a group ($G$-)action on $A$ by algebra automorphisms can extend non-trivially to an action of a Hopf algebra $H$ on $A$, and when this action can then extend non-trivially to an action of $D(H)$ on $A$.

The scope of this paper is based on the work of Susan Montgomery and Hans-J\"urgen Schneider in \cite{montschneid} on actions of the $n$-dimensional \emph{Taft Hopf algebra}, $T_n(q)$, generated by a grouplike element $g$ and a $(g,1)$-skew primitive element $x$, subject to the relations
\[
  g^n = 1, \quad x^n = 0, \quad gx = qxg.
\]
Here, $n \in \N_{\geq 2}$ and $q$ is a primitive $n^{th}$ root of unity.
In \cite{montschneid}, Montgomery and Schneider classified the $n$-dimensional $T_n(q)$-module algebras with no nonzero nilpotent elements, for which $x$ does not act by zero.
In fact, $x$ acting by nonzero is exactly the condition that this module structure is \textit{inner-faithful}, i.e., that the action does not factor through any proper Hopf quotient of $T_n(q)$ (see, e.g., Corollary~\ref{cor:ifprims}).
Moreover, by Lemma~\ref{lem:dim_bound} below, the value $n$ is the smallest possible dimension of an inner-faithful $T_n(q)$-module algebra with no nonzero nilpotent elements. 
Their classification was the following.

\begin{theorem}\cite[Theorem~2.5]{montschneid} \label{thm:taft}
   Take $n \geq 2$.
   Let $A$ be an $n$-dimensional inner-faithful $T_n(q)$-module algebra with no nonzero nilpotent elements.
   Then there exists an element $u \in A$ and nonzero scalars $\beta, \gamma \in \kk$ such that $A = \kk[u]/(u^n - \beta)$, $g \cdot u = q u$, and $x \cdot u = \gamma 1_A$. \qed
\end{theorem}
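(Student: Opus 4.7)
The plan is to first pin down the abstract algebra structure of $A$, then use the $g$-action to encode its symmetry combinatorially, and finally leverage inner-faithfulness together with the commutation $gx = qxg$ to force both the permutation structure and the element $u$.

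Since $A$ is a finite-dimensional $\kk$-algebra with no nonzero nilpotent elements, its Jacobson radical vanishes and $A$ is semisimple; over the algebraically closed field $\kk$, Artin--Wedderburn writes $A$ as a product of matrix algebras, and the no-nilpotent hypothesis forces each factor to be $1 \times 1$. Hence $A \cong \kk^n$ as an algebra, with primitive idempotents $e_0, \ldots, e_{n-1}$. The algebra automorphism $g$ permutes the $e_i$ by some $\sigma \in S_n$. Applying the skew-derivation Leibniz rule for $x$ to $e_i = e_i^2$ immediately gives $x \cdot e_i \in \kk e_i + \kk e_{\sigma(i)}$; write $x \cdot e_i = a_i e_i + b_i e_{\sigma(i)}$.

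The relation $gx = qxg$ combined with Leibniz produces a recursion along each $\sigma$-orbit by which $a_i$ and $b_i$ are determined (up to a global scalar) by their value at one basepoint; on an orbit of length $c$ the recursion closes up to give an equation of the form $a_i = q^{\pm c} a_i$. Since $q$ is a primitive $n$th root of unity, this forces $a_i = 0$, and hence via the counit identity $x \cdot 1_A = 0$ also $b_i = 0$, on every $\sigma$-orbit of length strictly less than $n$. Inner-faithfulness rules out $x$ acting as zero, so $\sigma$ must have at least one orbit of length $n$; since $\sigma$ acts on only $n$ idempotents, $\sigma$ is itself a single $n$-cycle. Relabel the idempotents so that $\sigma(i) = i+1 \pmod n$.

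With $\sigma$ an $n$-cycle the recursion is solved explicitly in terms of a single nonzero scalar $\alpha$, giving a closed form for $x \cdot e_i$. Set $u \defeq \sum_{i=0}^{n-1} q^{-i} e_i$. Since the matrix $\bigl(q^{-ik}\bigr)_{0 \leq i,k \leq n-1}$ is Vandermonde and hence invertible, the powers $1, u, \ldots, u^{n-1}$ form a $\kk$-basis of $A$, so $A = \kk[u]$. Direct computation then yields $g \cdot u = qu$, $u^n = 1_A$, and $x \cdot u = \alpha(1-q) \cdot 1_A$, a nonzero scalar multiple of $1_A$ since $\alpha \neq 0$ and $q \neq 1$. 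Rescaling $u$ adjusts $\beta$ to any prescribed nonzero value.

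The main obstacle is the cycle-structure step: teasing out from the three ingredients (skew-primitivity, the commutation relation, and the counit identity) that $x$ must annihilate every short $\sigma$-orbit, so that inner-faithfulness forces $\sigma$ to be a single $n$-cycle. Once this combinatorial fact is settled, everything else reduces to a direct Vandermonde-style calculation.
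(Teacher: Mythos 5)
Your argument is correct, and it is worth noting that the paper gives no proof of this statement at all --- it is quoted from Montgomery and Schneider with a citation --- so the only in-paper point of comparison is the generalization to $H_n(\zeta,m,t)$ in Section~\ref{sect:qls}. Your route (Artin--Wedderburn reduction to $A \cong \kk^n$, the permutation $\sigma$ of primitive idempotents induced by $g$, the skew-Leibniz constraint $x \cdot e_i \in \kk e_i + \kk e_{\sigma(i)}$, the vanishing of the $x$-action on every $\sigma$-orbit of length less than $n$, and the Vandermonde change of basis) is essentially the original Montgomery--Schneider analysis of skew derivations of $\kk^n$. The paper's generalization instead starts from the eigenspace decomposition of the $g$-action (Remark~\ref{rem:astructure}, resting on Lemma~\ref{lem:dim_bound}): faithfulness of $\langle g \rangle$ forces all $n$ eigenspaces $A_i$ to be nonzero, hence one-dimensional with $A_i = \kk u^i$, after which the commutation relation places $x \cdot u$ in a single eigenspace and no orbit combinatorics is needed. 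The two decompositions are discrete Fourier transforms of one another; your version buys a self-contained derivation that $g$ acts with full order $n$ (extracted from the nonvanishing of the $x$-action alone), while the eigenspace route is shorter precisely because it never has to rule out short orbits.

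One point to tidy up. With the relation $gx = qxg$ that you quote (and that the paper displays), the computation $g \cdot (x \cdot u) = q\, x \cdot (g \cdot u) = q^2\, x \cdot u$ places $x \cdot u$ in the $q^2$-eigenspace $\kk u^2$, so your closed form should come out as $x \cdot u = \alpha(1-q)\,u^2$ rather than $\alpha(1-q)\,1_A$. The conclusion $x \cdot u = \gamma 1_A$ is consistent only with the opposite convention $xg = qgx$ used by Montgomery and Schneider; the paper itself acknowledges exactly this discrepancy in the discussion following Proposition~\ref{prop:minexistence}. This is a convention mismatch inherited from the statement rather than a defect in your strategy --- the orbit argument, which only needs $q^{\pm c} \neq 1$ for $0 < c < n$, is unaffected --- but the final direct computation must be carried out in the convention matching the relation you actually use, or the asserted formula for $x \cdot u$ will not be what you compute.
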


By scaling $u$, we can assume without loss of generality that $u^n = 1_A$ in $A$ above.
Thus, $A$ is in fact~isomorphic as an algebra to the group algebra $\kk G$, where $G = G(T_n(q)) \cong \Z / n\Z$ is the group of grouplike elements of $T_n(q)$.
Moreover, note that since $G$ is abelian, $G \cong \widehat G$, the character group of $G$.
(This isomorphism is in general not unique.)
The action of $ \kk G \subseteq T_n(q)$ on $A \cong \kk G$ is induced by the character group:
Fix generators $g \in \widehat G$ and $u \in G$ so that $\langle g, u \rangle = q$; then, in $A \cong  \kk G$, we get that $g \cdot u^m = q^m u^m = \langle g, u^m \rangle u^m$.

Thus, Montgomery and Schneider classified all the inner-faithful actions of $T_n(q)$ on the group algebra of its grouplike elements $ \kk G(T_n(q))$, extending the action of $ \kk G(T_n(q))$ on itself as just described. 
We set the following notation.

\begin{notation}[$A(H)$] \label{not:ah}
   For a Hopf algebra $H$ with a finite abelian group of grouplike elements $G \defeq G(H)$, let $A(H)$ denote an inner-faithful $H$-module algebra that is isomorphic to $ \kk G$ as an algebra so that $ \kk G \subset H$ acts on $A(H) \cong \kk G$ as $\kk \widehat G$ does in the manner described above.
\end{notation}

Montgomery and Schneider showed further that for $n \geq 3$, each such action of $T_n(q)$ on $A(T_n(q))$ can be extended uniquely to an action of $D(T_n(q))$ on $A(T_n(q))$; we recall the details of their result in Theorem~\ref{thm:taftext}.
Therefore, each module algebra $A(T_n(q))$ gives a solution to the quantum Yang-Baxter equation, and the symmetries of $A(T_n(q))$ coming from the action of $D(T_n(q))$ are, in a sense, determined uniquely by the symmetries coming from the action of $T_n(q)$.
Motivated by their work, we investigate the following questions.
\begin{question} \label{questions}
   Let $H$ be a finite-dimensional Hopf algebra with an abelian group of grouplike elements.
   \begin{enumerate}
      \item
         Do the module algebra structures $A(H)$ as described in Notation~\ref{not:ah} exist?
   \end{enumerate}
   If (a) is affirmative, then:
   \begin{enumerate}[resume]
      \item
         What are the possible $H$-module structures on $A(H)$?
      \item
         What are the possible $D(H)$-module algebra structures on $A(H)$ extending that in (b)?
         How many extensions are there? 
         In particular, is there a unique extension as in the case of the Taft algebras?
   \end{enumerate}
\end{question}

\begin{remark}
  The first case to consider is, naturally, the case $H = \kk G$ for $G$ a finite abelian group.
  Here, $A(H) = H$ with the action coming from any faithful action of $\widehat G \cong G$ on $\kk G$ by algebra automorphisms, which addresses \mbox{Question~\ref{questions}(a,b).}
  Note that $D(\kk G) \cong \kk G \tens \kk G$ as Hopf algebras with the tensor product Hopf algebra structure.
  The second copy of $\kk G$ corresponds to the original $H$, and the first copy corresponds to the dual $(\kk G)^* \cong \kk G$. 
  Thus, any extension of an action of $\kk G$ on $A(\kk G)$ to one of $D(\kk G)$ on $A(\kk G)$ is given by any other action (not necessarily faithful) of $\widehat G \cong G$ on $\kk G$ by algebra automorphisms.
\end{remark}

Because the answers to Question~\ref{questions} are interesting for the Taft algebras $T_n(q)$, we will answer these questions for some pointed, finite-dimensional Hopf algebras related to Taft algebras.
In Section~\ref{sect:basics}, we provide background information pertaining to actions of pointed Hopf algebras and their Drinfel'd doubles that will be used throughout.
Section~\ref{sect:taft} goes over the case of the Taft algebras in more detail, and gives an answer to Question~\ref{questions}(c) for the Sweedler algebra $T_2(-1)$.
Section~\ref{sect:qls} is dedicated to a family of coradically graded Hopf algebras, $H_n(\zeta,m,t)$, for which the Taft algebras are a subclass; these Hopf algebras arise as bosonizations of quantum linear spaces from Andruskiewitsch and Schneider's work \cite{as-qls}.
Explicit computations are given for the dual $H_n(\zeta,m,t)^*$, with the dual pairing given, and for $D(H_n(\zeta,m,t))$ before addressing Question~\ref{questions}.
A non-trivial lifting of $H_4(\zeta,1,2)$, namely the generalized Taft algebra $T(4,2,1)$, is the subject of Section~\ref{sect:gentaft}.
Again, explicit computations of the dual and double are given for $T(4,2,1)$.
It is known that a Taft algebra can be considered as the positive Borel part of the Frobenius-Lusztig kernel $u_q(\mf{sl}_2)$.
Section~\ref{sect:uqsl2} answers Question~\ref{questions} for the full small quantum group $u_q(\mf{sl}_2)$, while Appendix~\ref{sect:uqdouble} is devoted to the computation of the presentations of $u_q(\mf{sl}_2)^*$ and of $D(u_q(\mf{sl}_2))$ needed specifically for this work, which may be of independent interest. 
In particular, our method for answering Question~\ref{questions} for $H = u_q(\mf{sl}_2)$ works best when the comultiplication of its Drinfel'd double is uncomplicated.
Our main results are summarized as follows.

\begin{theorem} \label{thm:main}
  Consider the finite-dimensional pointed Hopf algebras $T_n(q)$, $T_2(-1)$, $H_n(\zeta,m,t)$, $T(4,2,1)$, and $u_q(\mf{sl}_2)$ discussed above.
  Then, Question~\ref{questions} is addressed for these Hopf algebras, as detailed in Table~\ref{tab:results}.
\end{theorem}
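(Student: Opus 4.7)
The plan is to treat each of the Hopf algebras $H \in \{T_n(q),\, T_2(-1),\, H_n(\zeta,m,t),\, T(4,2,1),\, u_q(\mf{sl}_2)\}$ separately, but along a common template dictated by the three parts of Question~\ref{questions}. For part~(a), I would show existence of $A(H)$ by exhibiting an explicit inner-faithful module algebra structure on $\kk G(H)$: start from the canonical action of $\kk G(H)$ on itself via the character pairing as in Notation~\ref{not:ah}, and then extend the action of the skew-primitive generators of $H$ to a generator $u$ of $\kk G(H)$ in the most general form compatible with (i)~the skew-derivation property forced by the coproduct, and (ii)~the $n$-th power relation $u^n = 1$ in $A(H)$. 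For part~(b), I would use inner-faithfulness (via Corollary~\ref{cor:ifprims}) to rule out the degenerate possibilities and classify the remaining actions up to rescaling of $u$ and of the skew-primitive generators.

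For part~(c), the strategy is to use the known presentation of $D(H)$ together with the fact that any extension of the given $H$-action to a $D(H)$-action is determined by the values on the generators of $H^* \subset D(H)$. First I would compute $H^*$ and the dual pairing explicitly; this is standard for the Taft and Sweedler cases, is carried out in Section~\ref{sect:qls} for $H_n(\zeta,m,t)$, in Section~\ref{sect:gentaft} for $T(4,2,1)$, and in Appendix~\ref{sect:uqdouble} for $u_q(\mf{sl}_2)$. Once $D(H)$ is in hand, the grouplike part $G(D(H))$ must act on $A(H)$ by algebra automorphisms preserving the given character action, and each skew-primitive generator of $H^*$ must act as a twisted derivation whose value on $u$ lies in $\kk[u]$. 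The cross relations in $D(H)$ (the Drinfel'd double commutation relations coming from the Yetter--Drinfel'd compatibility) then cut the parameter space down; matching these relations against the commutation of the corresponding operators on $A(H)$ reduces the problem to a finite system of polynomial equations in a handful of scalars, whose solution set enumerates the extensions.

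The principal obstacle is that for $u_q(\mf{sl}_2)$, the comultiplication in $D(H)$ is considerably less transparent than in the Taft or quantum linear space cases, because the dual $H^*$ is not itself pointed in as clean a way. This is why Appendix~\ref{sect:uqdouble} is devoted to producing a presentation of $D(u_q(\mf{sl}_2))$ in which the comultiplication is explicit enough to read off the skew-derivation conditions directly; without this, the compatibility equations for a $D(H)$-action become intractable. Once that presentation is available, the computations for $u_q(\mf{sl}_2)$ parallel the earlier ones, though with more generators and thus more scalars to pin down.

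Finally, I would assemble the outputs of these case-by-case analyses into Table~\ref{tab:results}: for each $H$, list whether $A(H)$ exists, the number of inequivalent $H$-module structures on $A(H)$, and the number (or parametrization) of $D(H)$-extensions. The Taft case from Theorem~\ref{thm:taft} and \cite[Theorem~2.5]{montschneid} serves as the model entry, and the guiding comparison is whether the uniqueness phenomenon observed there (one $D(T_n(q))$-extension for $n\geq 3$) persists or fails for the nearby Hopf algebras.
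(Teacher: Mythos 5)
Your proposal follows essentially the same route as the paper: Theorem~\ref{thm:main} is proved there by exactly the case-by-case template you describe --- establish (non)existence and classification of $A(H)$ via the eigenspace decomposition under $G(H)$ and inner-faithfulness, compute $H^*$ with an explicit perfect pairing, present $D(H)$ via the cross relations, and then solve the resulting scalar equations to enumerate extensions, with the appendix supplying the tractable presentation of $D(u_q(\mf{sl}_2))$ for the same reason you identify. The only point to keep in view when executing the plan is that the template must accommodate negative outcomes, namely the gcd obstruction to the existence of $A(H_n(\zeta,m,t))$ and the nonexistence of any $D(T(4,2,1))$-extension, which your closing paragraph already allows for.
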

The results of Montgomery and Schneider for $T_n(q)$ are included in Table~\ref{tab:results} for comparison, and the proof of Theorem~\ref{thm:main} is the focus of the rest of the work.

\begin{center}
  \begin{table}[h!]
    \begin{tabular}{ |c|c|c|c| }
      \hline
      \cellcolor[gray]{.9} \textbf{$H$}
        & \cellcolor[gray]{.9} \textbf{Actions of $H$ on $A(H)$}
        & \cellcolor[gray]{.9}  \textbf{Extension to actions of $D(H)$}
        & \cellcolor[gray]{.9} \textbf{\# / Parametrization} \\
        \cellcolor[gray]{.9} (gens. of $H$)
        & \cellcolor[gray]{.9} ( $\forall H$, $A(H)$ is gen. by $u$)
        & \cellcolor[gray]{.9}
        & \cellcolor[gray]{.9} \textbf{of extns. to} \\
        \cellcolor[gray]{.9} (gens. of $H^*$)
        & \cellcolor[gray]{.9} on Question~\ref{questions}(a,b)
        & \cellcolor[gray]{.9} on Question~\ref{questions}(c)
        & \cellcolor[gray]{.9} \textbf{actions of $D(H)$} \\      
      \hline
      $T_n(q)$
        & $g \cdot u = qu, \ \ x \cdot u = \gamma 1 $
        & $G \cdot u = q^{-1}u, \ \ X \cdot u = \lambda u^2$
        & \multirow{3}{*}{$1$} \\
        $(g,x)$
        & $0 \neq \gamma \in \kk$
        & $\lambda \in \kk, \ \gamma \lambda = q^{-1}-1$
        & \\
        $(G,X)$
        & [Theorem \ref{thm:taft}]
        & [Theorem \ref{thm:taftext}]
        & \\
      \hline
      \cellcolor[gray]{.9} $T_2(-1)$
        & \cellcolor[gray]{.9} $g \cdot u = -u, \ \ x \cdot u = \gamma 1$
        & \cellcolor[gray]{.9} $G \cdot u = -u, \ \ X \cdot u = \lambda 1 $
        & \cellcolor[gray]{.9} \\
        \cellcolor[gray]{.9} $(g,x)$
        & \cellcolor[gray]{.9} $0 \neq \gamma \in \kk$
        & \cellcolor[gray]{.9} $\lambda \in \kk$
        & \cellcolor[gray]{.9}  \\
        \cellcolor[gray]{.9} $(G,X)$
        & \cellcolor[gray]{.9}[Theorem \ref{thm:taft}]
        & \cellcolor[gray]{.9} [Proposition \ref{prop:sweedlerext}]
        & \cellcolor[gray]{.9} \multirow{-3}{*}{$\kk$} \\
      \hline
      \multirow{4}{*}{$H_n(\zeta,m,t)$}
          & exists if $\text{gcd}(mt, n) = m$: 
          & $Y \cdot u = \zeta^d u,$
          & \\
          & $y \cdot u = \zeta u,$
          & $ X \cdot u = \delta u^{n+1-t}$,
          & \multirow{2}{*}{$t$, if $2m \neq n$ } \\
          & $x \cdot u = \gamma u^{t+1}$,
          &  $m \equiv -dt \pmod n$,
          & \\
          & $0 \neq \gamma \in \kk$
          & $\delta \in \kk$,
          & \multirow{2}{*}{$t \times \kk$, if $2m = n$ } \\
        $(y,x)$
          & \multirow{2}{*}{[Proposition \ref{prop:minexistence}]}
          & $\gamma \delta = \frac {\zeta^{-m} - 1}{(n-t)_{\zeta^m}} $ if $2m \neq n$
          & \\
        $(Y,X)$
          &
          & [Theorem \ref{thm:ahnzmt}]
          & \\
      \hline
      \cellcolor[gray]{.9} $T(4,2,1)$
        & \cellcolor[gray]{.9} $g \cdot u = qu, \ \ x \cdot u = \gamma u^3$
        & \cellcolor[gray]{.9}
        & \cellcolor[gray]{.9} \\
        \cellcolor[gray]{.9} $(g,x)$
        & \cellcolor[gray]{.9} $\gamma \in \kk, \ \gamma^2 = 2q$ 
        & \cellcolor[gray]{.9} \multirow{-2}{*}{(None)}
        & \cellcolor[gray]{.9} \\
        \cellcolor[gray]{.9} $(G,X)$
        & \cellcolor[gray]{.9} [Proposition \ref{prop:at421}]
        & \cellcolor[gray]{.9} [Proposition \ref{prop:t421extensions}]
        & \cellcolor[gray]{.9} \multirow{-3}{*}{$0$}  \\
      \hline
        \multirow{4}{*}{$u_q(\mf{sl}_2)$}
          & $K \cdot u = q^2 u,$
          & $a \cdot u = qu, \ \ b \cdot u = (q - q^{-1}) 1,$
          & {} \\
          & $F \cdot u = \gamma 1,$
          & $c \cdot u = 0, \ \ d \cdot u = q^{-1} u$,
          & {} \\
          & $E \cdot u = \delta u^2$
          & --- or ---
          & \\
          & $\gamma, \delta \in \kk, \ \gamma \delta = -q$ 
          & $a \cdot u = q^{-1}u, \ \ b \cdot u = 0, $
          & {} \\
        $(K,E,F)$
          & \multirow{2}{*}{[Proposition \ref{prop:montuq}]}
          & $c \cdot u = (q - q^{-1}) 1, \ \ d \cdot u = q u $
          & {} \\
        $(a,b,c,d)$
          & 
          & [Theorem \ref{thm:uqextension}]
          & \multirow{-6}{*}{$2$} \\            
      \hline
    \end{tabular}
    \medskip
    \caption{Summary of Main Results} \label{tab:results}
  \end{table}
\end{center}

\vspace{-.2in}

Now we recall related results in the literature that may be of interest.
In \cite{cfm-ydcats}, Cohen, Fischman, and Montgomery examine conditions on a Hopf algebra $H$ and left $H$-module $H$-comodule algebra $A$ under which $A$ can be realized as a $D(H)$-module algebra.
In particular, they show that if $H$ has a bijective antipode and either (i) $A$ is a faithful $A \# H$-module, or (ii) $A / A^{coH}$ is $H$-Galois and $A$ is $H$-commutative, then $A$ is a $D(H)$-module algebra.
Chen and Zhang classified all $D(T_2(-1))$-module algebras of dimension 4 up to isomorphism as $D(T_2(-1))$-modules in \cite{chen-zhang}, in particular giving all $D(T_2(-1))$-module algebra structures on $M_2(\kk)$.
In \cite{kw-quivers}, Kinser and Walton examine actions of Taft algebras on path algebras of quivers, and extend such actions to $D(T_n(q))$.

\medskip
We also wish to briefly mention some questions and further directions.
First, Question~\ref{q:nichols} points out that there are many ways to generalize Taft algebras, such as examining quantum linear spaces of higher rank over abelian, non-cyclic groups, or by considering more generally Nichols algebras (of Cartan type) in the Yetter-Drinfeld category ${}_G^G \mc{YD}$ for some abelian group $G$.
It is possible that similar results hold in one or more of these cases.
Question~\ref{q:borel} pertains to the relationship between the number of ways to extend an action of $u_q(\mf g)$ on $A(u_q(\mf g))$ to an action of $D(u_q(\mf g))$ and the number of ways to extend an action of a Borel subalgebra of $u_q(\mf g)$ to an action of its double.
It arises from the very limited data we have, i.e. for $\mf g = \mf{sl}_2$.


\section{Definitions and basic concepts} \label{sect:basics}

For an exposition of coalgebras, Hopf algebras, and their representations, see \cite{montgomery} and \cite{radford}.
Here, we recall some of the basic notions and specify the notation used throughout.
We will denote the comultiplication and counit maps of a coalgebra by $\Delta$ and $ \epsilon $, respectively.
The set of \emph{grouplike elements} of a coalgebra $G(C)$ are the nonzero elements $c$ such that $\Delta(c) = c \tens c$.
For $g,h \in G(C)$, the space of \emph{$(g,h)$-skew primitive elements} $P_{g,h}(C)$ is the set of elements $c \in C$ such that $\Delta(c) = g \tens c + c \tens h$.
The symbol $S$ will be used for the antipode of a Hopf algebra $H$.
Sweedler notation will also be used for the comultiplication throughout: we will write $\com a 1 \tens \com a 2$ for $ \Delta(a)$.
For an algebra $A$ and a left $A$-module $M$, we will denote the action of $a \in A$ on $m \in M$ by $a \cdot m$.
For a coalgebra $C$ and a left $C$-comodule $M$, we will signify the coaction by $\rho: M \to C \tens M$, and use the modified Sweedler notation $\rho(m) = \com m {-1} \tens \com m 0$ for $m \in M$.

Recall that a coalgebra $C$ is called \emph{simple} if its only subcoalgebras are $0$ and $C$.
The \emph{coradical} of $C$ is the (direct) sum of its simple subcoalgebras and is denoted $C_0$.
A coalgebra (or bialgebra, Hopf algebra) is called \emph{pointed} if all its simple subcoalgebras are $1$-dimensional, or equivalently, if all its simple left (or right) comodules are $1$-dimensional.
In fact, $C$ is \emph{pointed} if and only if $C_0 =  \kk G(C)$.
It is well-known that any bialgebra generated by grouplike and skew-primitive elements is pointed.
Andruskiewitsch and Schneider conjecture that, conversely, all finite-dimensional pointed Hopf algebras, $H$, over an algebraically closed field of characteristic $0$ are generated by grouplike and skew-primitive elements, \cite[Conjecture~5.7]{as-pha}; Angiono verified this conjecture in the case when $G(H)$ is abelian, \cite[Theorem~2]{angiono}.
The \emph{coradical filtration} of $C$ is defined inductively by $C_0$ being the coradical and $C_i = \Delta^{-1}( C_{i-1} \tens C + C \tens C_0)$ for all $i > 0$.
We denote the associated graded coalgebra by $\text{gr}(C)$.
A graded coalgebra (or bialgebra, Hopf algebra) $C = \bigoplus_{i \geq 0} C(i)$ is called \emph{coradically graded} if $\bigoplus_{j = 0}^i C(j) = C_i$ for all $i \geq 0$, where $\{C_i\}$ is the coradical filtration.
For a Hopf algebra $H$, if $H_0$ is a Hopf subalgebra, then $\text{gr}(H)$ is coradically graded, and $H$ is called a \emph{lifting} of $\text{gr}(H)$.

In Section~\ref{subsect:innerfaithful}, we recall some facts about inner-faithful module algebras over pointed Hopf algebras.
Section~\ref{subsect:qsymbols} introduces the $q$-binomial coefficients and related symbols.
We give more details about the~Drinfel'd double construction in Section~\ref{subsect:drinfelddouble}.
In Section~\ref{subsect:duality}, we record the definition of a perfect duality between two Hopf algebras.
In Section~\ref{subsect:ydmodules}, the definition of Yetter-Drinfel'd modules and bosonizations are recalled.

\subsection{Inner-faithful module algebras} \label{subsect:innerfaithful}

For $H$ a Hopf algebra, an \emph{$H$-module algebra} $A$ is an algebra (or monoid) in the monoidal category of $H$-modules.
In other words, $A$ is an $H$-module such that $h \cdot (ab) = (\com h 1 \cdot a) (\com h 2 \cdot b)$ and $h \cdot 1_A = \epsilon(h) 1_A$ for all $h \in H$ and $a,b \in A$.
Throughout, we consider module algebras over some pointed Hopf algebras that are faithful in the following sense.

\begin{definition}
   Let $H$ be a Hopf algebra and $M$ a left $H$-module.
   We say that $M$ is an \emph{inner-faithful} $H$-module, or that the action of $H$ on $M$ is \emph{inner-faithful} provided $I \cdot M \neq 0$ for any nonzero Hopf ideal $I$ of $H$.
   In other words, the action of $H$ on $M$ is inner-faithful provided the action on $M$ does not factor through any proper Hopf quotient of $H$. 
   If $A$ is an $H$-module algebra such that the action of $H$ on $A$ is inner-faithful, we call $A$ an \emph{inner-faithful} $H$-module algebra.
\end{definition}

Clearly, if the action of $H$ on $M$ is faithful, then it is inner-faithful.
Since all of the Hopf algebras we will consider in this work are pointed, the following results will be useful.

\begin{lemma} \label{lem:primsinideals}
   Let $H$ be a pointed Hopf algebra and $I$ a nonzero Hopf ideal of $H$. 
   Then $I$ contains a nonzero element of $P_{g,1}(H)$ for some $g \in G(H)$. 
\end{lemma}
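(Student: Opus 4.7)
My plan is to combine two classical facts about pointed coalgebras: the Heyneman–Radford trace theorem, which says that a coalgebra map is injective if and only if its restriction to the first term of the coradical filtration is injective, and the Taft–Wilson theorem, which for a pointed coalgebra produces a direct-sum decomposition
$$H_1 = \kk G(H) \oplus \bigoplus_{g, h \in G(H)} V_{g, h}, \quad V_{g, h} \text{ a chosen complement of } \kk(g-h) \text{ in } P_{g, h}(H).$$
Since the projection $\pi \colon H \to H/I$ is not injective, Heyneman–Radford forces $I \cap H_1 \neq 0$, so the task reduces to extracting an element of some $P_{g,1}(H)$ from $I \cap H_1$. I then split into two cases according to whether $I$ meets the coradical $H_0$.

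If $I \cap H_0 \neq 0$: linear independence of grouplikes yields distinct $g, h \in G(H)$ with $\pi(g) = \pi(h)$, i.e.\ $g - h \in I$. A direct computation, using $\Delta(h^{-1}) = h^{-1} \tens h^{-1}$, shows that right-multiplication by $h^{-1}$ sends $P_{g, h}(H)$ into $P_{gh^{-1}, 1}(H)$; applying this to $g - h$ produces the nonzero element $gh^{-1} - 1 \in I \cap P_{gh^{-1}, 1}(H)$.

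If $I \cap H_0 = 0$: then $\pi$ is injective on $G(H)$, and (since $H/I$ is again pointed) $H/I$ carries a Taft–Wilson decomposition indexed by grouplikes in $\pi(G(H)) \subseteq G(H/I)$, all distinct. Pick any nonzero $x \in I \cap H_1$ and write $x = x_0 + \sum y_{g, h}$ with $x_0 \in \kk G(H)$ and $y_{g, h} \in V_{g, h}$; since $x \notin H_0$, some $y_{g, h}$ is nonzero. Decomposing each $\pi(y_{g, h}) = \mu_{g, h}(\pi(g) - \pi(h)) + z_{g, h}$ along the Taft–Wilson decomposition of $(H/I)_1$ and comparing components of the equation $\pi(x) = 0$, I will conclude that every $z_{g, h}$ vanishes. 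Therefore $y_{g, h} - \mu_{g, h}(g - h) \in I \cap P_{g, h}(H)$ for each pair, and this element is nonzero whenever $y_{g, h}$ is, because $V_{g, h}$ and $\kk(g - h)$ are complementary in $P_{g, h}(H)$. Right-multiplying by $h^{-1}$ then delivers the required nonzero element of $I \cap P_{gh^{-1}, 1}(H)$.

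The main subtlety I anticipate is the diagonal case $g = h$, where there is no ``trivial'' skew-primitive $g - h$ to subtract. Here the analysis above degenerates to the cleaner statement that $y_{g, g} \in P_{g, g}(H) = g \cdot P(H)$ itself lies in $I$, and left-multiplying by $g^{-1}$ yields a nonzero primitive belonging to $P_{1, 1}(H)$, which is exactly $P_{g, 1}(H)$ with $g = 1$. Beyond this, once the decomposition machinery is set up, the remaining work amounts to careful bookkeeping of the Taft–Wilson components under $\pi$.
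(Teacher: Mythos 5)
Your proof is correct, and its endgame is identical to the paper's: locate a nonzero element of $I \cap P_{g,h}(H)$ and translate it by $h^{-1}$ into $P_{gh^{-1},1}(H)$. The difference is in how the skew-primitive element of $I$ is produced. The paper does this in one stroke by citing Takeuchi's theorem, which says directly that a non-injective coalgebra map out of a pointed coalgebra must fail to be injective on some $P_{g,h}$; your two cases (whether or not $I$ meets the coradical) are precisely the two ways that theorem's conclusion can occur, and you re-derive it from Heyneman--Radford plus the Taft--Wilson decomposition, carrying out the component bookkeeping by hand. What you buy is a self-contained argument from more primitive standard facts (including the quotient $H/I$ being pointed and $\pi(H_1) \subseteq (H/I)_1$), at the cost of length; the paper buys brevity at the cost of a black-box citation. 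Your handling of the potential trouble spots is sound: the injectivity of $\pi$ on $G(H)$ in the second case is what keeps the Taft--Wilson summands of $(H/I)_1$ indexed by distinct pairs, so the components $z_{g,h}$ genuinely must vanish separately, and the diagonal case $g=h$ correctly degenerates to an honest primitive in $P_{1,1}(H)$.
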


   \begin{proof}
      Consider the projection map $f: H \to H/I$.
      Since $I \neq 0$, $f$ is not injective.
      Therefore, by \cite[6.1.1]{takeuchi}, we can fix some $g,h \in G(H)$, with $f \mid _{P_{g,h}(H)}$ not injective.
      Choose nonzero $x \in P_{g,h}(H)$ such that $f(x) = 0$ (i.e. $x \in I$), and take $x' = xh^{-1}$.
      Then $x' \in P_{gh^{-1},1}(H) \cap I$ and $x' \neq 0$, or else $x = x' h = 0$.
   \end{proof}

\begin{corollary} \label{cor:ifprims}
   Let $H$ be a pointed Hopf algebra and $A$ an $H$-module algebra.
   Then the action of $H$ on $A$ is inner-faithful if and only if for each $g \in G(H)$ and nonzero $x \in P_{g,1}(H)$ we have that $x \cdot A \neq 0$.
\end{corollary}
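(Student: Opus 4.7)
The plan is to deduce both implications directly from Lemma~\ref{lem:primsinideals} together with the observation that the two-sided ideal generated by a nonzero $(g,1)$-skew primitive element is automatically a Hopf ideal.

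For the reverse direction I would argue by contrapositive. Assume the action of $H$ on $A$ is not inner-faithful; by definition there exists a nonzero Hopf ideal $I \subseteq H$ with $I \cdot A = 0$. Applying Lemma~\ref{lem:primsinideals} to $I$ produces a nonzero element $x \in I \cap P_{g,1}(H)$ for some $g \in G(H)$, and then $x \cdot A \subseteq I \cdot A = 0$. This contradicts the hypothesis and yields the implication.

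For the forward direction I would again argue by contrapositive. Suppose $x \cdot A = 0$ for some $g \in G(H)$ and some nonzero $x \in P_{g,1}(H)$. The annihilator $\mathrm{Ann}_H(A) = \{h \in H : h \cdot A = 0\}$ is a two-sided ideal of $H$, so the two-sided ideal $J = HxH$ generated by $x$ is contained in $\mathrm{Ann}_H(A)$. The key point is that $J$ is in fact a Hopf ideal: from $\Delta(x) = g \otimes x + x \otimes 1$ we get $\Delta(J) \subseteq H \otimes J + J \otimes H$, the counit $\epsilon(x) = 0$ forces $\epsilon(J) = 0$, and since $S$ is an antihomomorphism with $S(x) = -g^{-1}x \in J$ we obtain $S(J) \subseteq J$. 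Thus $J$ is a nonzero Hopf ideal acting trivially on $A$, contradicting inner-faithfulness.

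I do not expect any serious obstacle here — the only subtlety is verifying that $HxH$ is closed under $\Delta$ and $S$, and this is immediate from the skew-primitive relation. Once Lemma~\ref{lem:primsinideals} is in hand, the corollary is a short, formal consequence.
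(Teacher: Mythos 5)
Your proposal is correct and follows essentially the same route as the paper: one direction is Lemma~\ref{lem:primsinideals} applied to a Hopf ideal annihilating $A$, and the other uses that the two-sided ideal generated by a $(g,1)$-skew primitive element is a Hopf ideal contained in the annihilator. The only difference is that you spell out the verification that $HxH$ is closed under $\Delta$, $\epsilon$, and $S$, which the paper asserts without proof.
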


   \begin{proof}
      First, suppose the action is not inner-faithful.
      Choose a nonzero Hopf ideal $I$ such that $I \cdot A = 0$.
      Then by Lemma~\ref{lem:primsinideals}, $I$ contains some $(g,1)$-skew primitive element $x$ for some $g \in G(H)$.
      Thus, $x \cdot A = 0$.
      
      Now suppose there exists $g \in G(H)$ and nonzero $x \in P_{g,1}(H)$ such that $x \cdot A = 0$.
      Since the principal ideal $(x)$ is a Hopf ideal, we have obtained a nonzero Hopf ideal which acts by zero on $A$, and thus, the action of $H$ on $A$ is not inner-faithful.
   \end{proof}

These results actually give us a lower bound on the $\kk$-vector space dimension of inner-faithful module algebras with no nonzero nilpotent elements.

\begin{lemma} \label{lem:dim_bound}
   Suppose that a finite group $G$ acts faithfully by algebra automorphisms on a finite-dimensional $\kk$-algebra $A$ with no nonzero nilpotent elements. 
   Then 
   \[
      \dim_\kk(A) \geq \max \{\ord(g): g \in G \} .
   \]
\end{lemma}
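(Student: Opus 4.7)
The plan is to pin down the structure of $A$ using the reducedness hypothesis, and then to produce $m$ linearly independent elements of $A$ via an eigenspace decomposition for an element $g \in G$ of maximal order $m$.

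First, I would show that $A \cong \kk^n$ where $n = \dim_\kk A$. Since $A$ is finite-dimensional with no nonzero nilpotent elements, its Jacobson radical (a nilpotent two-sided ideal in this setting) vanishes, so $A$ is semisimple. The Artin--Wedderburn theorem, together with the algebraic closedness of $\kk$, gives $A \cong \prod_i M_{k_i}(\kk)$; each $k_i$ must equal $1$, since for $k \geq 2$ the elementary matrix $E_{12}$ is a nonzero nilpotent element of $M_k(\kk)$. Consequently every $\kk$-algebra automorphism of $A$ permutes the $n$ primitive idempotents, so the faithful action of $G$ yields an injection $G \hookrightarrow S_n$.

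Second, I would fix $g \in G$ of maximal order $m$. Since $\kk$ has characteristic zero and is algebraically closed, it contains all $m$-th roots of unity, so $g$ acts diagonalizably on $A$ and one obtains an eigenspace decomposition $A = \bigoplus_{\zeta} A_\zeta$ indexed by the group of $m$-th roots of unity. Because $g$ acts as an algebra automorphism, $A_\zeta \cdot A_{\zeta'} \subseteq A_{\zeta\zeta'}$, so this is an algebra grading. I would then exhibit an eigenvector $u \neq 0$ whose eigenvalue $\zeta$ has order exactly $m$ by examining the cycle structure of $g$ on the $n$ primitive idempotents: a cycle of length $c$ contributes, upon Fourier decomposition, all $c$-th roots of unity as eigenvalues of $g$ on the corresponding $c$-dimensional $\langle g\rangle$-stable subspace, and one combines these contributions to realize a primitive $m$-th root of unity as an actual eigenvalue.

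Finally, given such a $u$, the conclusion is immediate: since $A$ has no nonzero nilpotent elements, every power $u^k$ is nonzero, and $u^k \in A_{\zeta^k}$ lies in pairwise distinct eigenspaces as $k$ ranges over $0, 1, \ldots, m-1$. Hence $1, u, u^2, \ldots, u^{m-1}$ are $\kk$-linearly independent, forcing $\dim_\kk A \geq m$. I expect the hardest step to be the production of a full-order-eigenvalue eigenvector in the second paragraph: it requires a careful analysis of the cycle structure of $g$ on the primitive idempotents (in particular, of how cycles of lengths $c_1, \ldots, c_k$ with $\mathrm{lcm}(c_i) = m$ assemble to produce an eigenvalue of order $m$), whereas the bookkeeping with powers of $u$ in the last step is routine once $u$ is in hand.
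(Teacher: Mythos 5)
Your first and third steps are correct and are essentially the paper's: reduce to an eigenspace decomposition for an element $g$ of maximal order $m$, produce a nonzero $u$ whose eigenvalue is a \emph{primitive} $m$-th root of unity, and conclude because the powers $u^0, u^1, \dots, u^{m-1}$ are nonzero (reducedness) and lie in distinct eigenspaces. The reduction $A \cong \kk^n$ with $G$ permuting the primitive idempotents is also a correct addition not made explicit in the paper. The fatal problem is the step you yourself flagged as hardest: it cannot be carried out. Once $g$ acts by a permutation of the $n$ primitive idempotents with cycle lengths $c_1, \dots, c_k$, the spectrum of $g$ on $A$ is exactly the union over $i$ of \emph{all} $c_i$-th roots of unity; a primitive $m$-th root of unity is a $c_i$-th root of unity only if $m \mid c_i$, and since each $c_i$ divides $m = \mathrm{lcm}(c_1,\dots,c_k)$, this forces some $c_i = m$. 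Cycles of lengths $2$ and $3$ have least common multiple $6$ but contribute no eigenvalue of order $6$, and you cannot ``combine contributions'' by multiplying eigenvectors supported on different cycles, because those products vanish (the underlying idempotents are orthogonal). Concretely, $\Z/6\Z$ acts faithfully on $\kk^5$ by the permutation $(1\,2)(3\,4\,5)$ of the primitive idempotents, and $\dim_\kk(\kk^5) = 5 < 6$, so the statement as literally written fails.

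You should know that the paper's own proof contains exactly the same gap: it asserts without justification that ``there exists $j$ such that $(j,n)=1$ and $A_j \neq 0$,'' and the example above shows this assertion is false for general reduced $A$. The assertion (and hence the argument) is valid under stronger hypotheses, e.g.\ when $A$ is a domain --- then $\{i : A_i \neq 0\}$ is closed under addition, hence is a subgroup of $\Z/n\Z$, and faithfulness forces it to be all of $\Z/n\Z$ --- or when $g$ acts on the idempotents by a single $n$-cycle, which is the situation in the paper's intended applications (where $A(H) \cong \kk G$ carries a distinguished eigenvector $u$ with $g \cdot u = qu$ by construction). So your proposal does not close the gap; it relocates it to the cycle-structure analysis, where it becomes visibly unfixable without additional hypotheses.
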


   \begin{proof}
      Let $ g \in G $ and $n = \ord(g)$. 
      Since $\langle g \rangle$ is finite abelian, the action of $g$ on $A$ is diagonalizable with
      \[
          A = \bigoplus_ { i = 0 } ^ { n - 1 } A_i,  \hspace{15mm} %
          A_i = \{ a \in A : g \cdot a = q^i a \},
      \]
      where $q$ is a fixed primitive $n^{th}$ root of unity. 
      Because $\ord(g) = n$, and the action is faithful, there exists $j$ such that $( j , n ) = 1$ and $A_j \neq 0$. 
      Without loss of generality, by choosing a different $q$, we can take $j = 1$. 
      Choose nonzero $u \in A_1$. 
      Since $A$ has no nonzero nilpotent elements, $u^i \neq 0$ for all $i$. 
      Also, $g \cdot u^i = q^i u^i $ for all $i$, showing that $u^i \in A_i$. 
      Thus, $A_i \neq 0$ for all $i$. 
      Therefore, $\dim_\kk(A) \geq n$.
   \end{proof} 
   
\begin{remark} \label{rem:astructure}
  For any Hopf algebra $H$, Lemma~\ref{lem:dim_bound} shows that if $G(H)$ is cyclic of order $n$, then the smallest possible dimension of an inner-faithful $H$-module algebra with no nonzero nilpotent elements is $n$, and that if such a lower bound is met, then these $H$-module algebras would be exactly $A(H)$ as in Notation~\ref{not:ah}.
   Fix a generator $g \in G(H)$.
   Then, for a generator $u \in A(H)$ such that $A(H) \cong \kk[u]/(u^n - 1)$, there is a primitive $n^{th}$ root of unity $q \in \kk$ with $g \cdot u = q u$.
   Alternatively, for a fixed $q$, we can choose $u \in A(H)$ such that $g \cdot u = qu$ and $A(H) = \kk[u]/(u^n - 1)$.
   Here, we write the eigenspaces of the $g$-action
   \[
      A_i = \{ a \in A: g \cdot a = q^i a \},
   \]
   noting that $A = \bigoplus _{i=0}^{n-1} A_i$ and $A_i =\kk u^i$.
   We will use this notation throughout.
\end{remark}

\subsection{$q$-Symbols} \label{subsect:qsymbols}

In all of the Hopf algebras $H$ that we consider in this work, there will be a relation of the form $yx = qxy$, for $q \in \kk$ where $x,y \in H$.
It is thus helpful to consider the \emph{quantum binomial coefficients}, ${\binom n m}_q$, which are defined using any $x,y$ such that $yx = qxy$ by
\begin{equation} \label{eq:skewbinom}
   (x + y)^n = \sum_{m=0}^n {\binom n m}_q x^{n-m} y^m.
\end{equation}

The $q$-binomial coefficients are related to the following symbols.
For any integer $n \geq 0$, set
\begin{gather*}
  (n)_q \defeq 1 + q + q^2 + \ldots + q^{n-1} = \frac{q^n - 1}{q-1} \quad \text{(if $q \neq 1$)}; \\
  (n)_q! \defeq (1)_q (2)_q \cdots (n)_q = \frac{(q-1)(q^2-1) \cdots (q^n-1)}{(q-1)^n} \quad \text{ (if $q \neq 1$)}.
\end{gather*}
By convention, we also define $(0)_q! = 1$.

The relationship between these symbols and $q$-binomial coefficients is given by \cite[Proposition~7.2.1(a)]{radford}:
If $(n-1)_q! \neq 0$, then one obtains that
$
  {\binom n m}_q = \frac {(n)_q!} {(m)_q! (n-m)_q!}.
$

We also have the following variation, which will be useful for the computation of $D(u_q(\mf{sl}_2))$ in Appendix~\ref{sect:uqdouble}.
Let $q \neq \pm 1 \in \kk$.
For any integer $n$, set
\begin{gather*}
  [n]_q = \frac {q^n - q^{-n}}{q - q^{-1}}
    = q^{n-1} + q^{n-3} + \cdots + q^{-n+1}.
\end{gather*}
For integers $0 \leq m \leq n$, set $[m]_q! = [1]_q [2]_q \cdots [m]_q$.
The relationship between these and the symbols $(k)_q$ defined above is given by
   $[n]_q = q^{-(n-1)} (n)_{q^2}$ and
   $[n]_q! = q^{-n(n-1)/2} (n)_{q^2}!$.

\subsection{The Drinfel'd double} \label{subsect:drinfelddouble}

We remind the reader of the construction of the Drinfel'd double of a finite-dimensional Hopf algebra.
Recall the transpose actions of a Hopf algebra $H$ on its dual $H^\circ$:
\[
   \langle a \succ p, b \rangle \defeq \langle p, ba \rangle, \quad 
   \langle p \prec a, b \rangle \defeq \langle p, ab \rangle, \quad 
   \text{for }a, b \in H,\  p \in H^\circ .
\]
When $H$ is finite-dimensional, $H^\circ = H^*$ is a Hopf algebra with multiplication given by $\Delta^*$ and comultiplication given by $m^*$. 
Thus, for $p \in H^*$, $\langle p, ab \rangle = \langle p, m(a \otimes b) \rangle = \langle m^*(p), a \otimes b \rangle = \langle p_{(1)}, a \rangle \langle p_{(2)}, b \rangle$.
Therefore, $a \succ p = \langle p_{(2)}, a \rangle p_{(1)}$ and $p \prec a = \langle p_{(1)}, a \rangle p_{(2)}$. 
Combining these two facts gives
\begin{equation*}
  a \succ p \prec b = \langle \com{p}{1}, b \rangle \langle \com{p}{3}, a \rangle \com{p}{2}.
\end{equation*}

\begin{definition}
   Let $H$ be a finite-dimensional Hopf algebra with antipode $S$. 
   (Recall that the antipode $S$ is then necessarily invertible.) 
   The \emph{Drinfel'd double}, $D(H)$, of $H$, is the Hopf algebra with coalgebra structure given by the tensor product coalgebra structure 
   \begin{equation} \label{eq:doublecomult}
      D(H) = H^{* cop} \otimes H,
   \end{equation}
   with multiplication given by
   \begin{equation} \label{eq:doublemult} 
      (p \otimes a)(q \otimes b) 
        ~=~ p \left( a_{(1)} \succ q \prec S^{-1}(a_{(3)})\right) \otimes a_{(2)}b 
        ~=~ \langle \com q 1, S^{-1}(\com a 3) \rangle \langle \com q 3, \com a 1 \rangle p  \; \com q 2 \tens \com a 2 b,
   \end{equation}
   with unit $\epsilon \tens 1$, and with antipode
   \begin{equation*} 
      S_{D(H)}(p \otimes a) = (\epsilon \otimes S(a)) (p \circ S^{-1} \otimes 1) \quad \text{for } p \in H^*, a \in H.
   \end{equation*}
   Simple tensors in $D(H)$ are written as $p \bowtie a$.
\end{definition}

Note that both $H$ and $H^{* cop}$ embed in $D(H)$, and we will think of elements of the former two as elements of the latter, by identifying $p \bowtie 1$ with $p$ and $\epsilon \bowtie a$ with $a$.
These identifications are justified by the following.

\begin{lemma} \label{lem:gens}
   Let $ H $ be a finite-dimensional Hopf algebra. 
   Then for $p, q \in H^*$ and $a, b \in H$, we have the following identities in $D(H)$: 
     $(p \bowtie 1)(\epsilon \bowtie a) = p \bowtie a$,
     $(p \bowtie 1)(q \bowtie 1) = pq \bowtie 1$,
     $(\epsilon \bowtie a)(\epsilon \bowtie b) = \epsilon \bowtie ab $,
     $S_{D(H)}(p \bowtie 1) = S_{H^{* cop}}(p) \bowtie 1$, and 
     $S_{D(H)}(\epsilon \bowtie a) = \epsilon \bowtie S_H(a)$.
   
   As a consequence, if $\{ a_i \}_{i = 1} ^n$ is a set of generators for $H$ and $\{ p_i \}_{i = 1} ^m$ is a set of generators for $H^*$, then $\{ p_i \bowtie 1 \}_{i = 1} ^m \cup \{\epsilon \bowtie a_i \}_{i = 1} ^n$ generates $D(H)$ as an algebra. 
   \qed
\end{lemma}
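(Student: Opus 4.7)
The plan is to verify the five identities by direct substitution into the multiplication formula \eqref{eq:doublemult} and the antipode formula, and then deduce the generation statement as a formal corollary. The only facts needed are that $1 \in H$ and $\epsilon \in H^*$ are grouplike (so $\Delta(1) = 1 \tens 1$ and $\Delta_{H^*}(\epsilon) = \epsilon \tens \epsilon$), the counit axiom, and that $S(1) = 1$, $\epsilon \circ S^{-1} = \epsilon$.

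First I would handle identity (a), $(p \bowtie 1)(\epsilon \bowtie a) = p \bowtie a$, by plugging $q = \epsilon$ and $a_L = 1$ into \eqref{eq:doublemult}: every factor coming from the Sweedler decomposition of $1$ and of $\epsilon$ collapses to the identity, and $p \cdot \epsilon = p$, $1 \cdot a = a$. Identity (b), $(p \bowtie 1)(q \bowtie 1) = pq \bowtie 1$, follows by setting the $H$-arguments to $1$: the pairings become $\epsilon(\com{q}{1})\epsilon(\com{q}{3})$, which by the counit axiom collapses $\com{q}{1}\tens\com{q}{2}\tens\com{q}{3}$ to $q$, leaving $pq \tens 1$. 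Identity (c), $(\epsilon \bowtie a)(\epsilon \bowtie b) = \epsilon \bowtie ab$, is symmetric: with $p = q = \epsilon$, the Sweedler components of $\epsilon$ are all $\epsilon$, and the pairings $\epsilon(S^{-1}(\com a 3)) \epsilon(\com a 1) = \epsilon(\com a 3)\epsilon(\com a 1)$ collapse $\com a 2$ to $a$, giving $\epsilon \tens ab$.

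Next I would derive (d) and (e) by feeding (b) and (c) into the antipode formula. For (d), $S_{D(H)}(p \bowtie 1) = (\epsilon \bowtie 1)(p \circ S^{-1} \bowtie 1)$, and (b) collapses this to $(p \circ S^{-1}) \bowtie 1$. The only subtle point is to observe that the antipode of $H^{* cop}$ is $(S^{H^*})^{-1} = (S^{-1})^*$, i.e., precisely the operator $p \mapsto p \circ S^{-1}$; this is the one bit of bookkeeping that must be kept straight, since the $cop$ convention is where signs / inverses enter. For (e), $S_{D(H)}(\epsilon \bowtie a) = (\epsilon \bowtie S(a))(\epsilon \circ S^{-1} \bowtie 1) = (\epsilon \bowtie S(a))(\epsilon \bowtie 1)$, and (c) collapses this to $\epsilon \bowtie S(a)$.

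Finally, the generation claim follows formally: by (a), every simple tensor satisfies $p \bowtie a = (p \bowtie 1)(\epsilon \bowtie a)$; by iterating (b), $p \bowtie 1$ is a polynomial in the $p_i \bowtie 1$; by iterating (c), $\epsilon \bowtie a$ is a polynomial in the $\epsilon \bowtie a_i$; and these simple tensors span $D(H)$. There is no real obstacle here — each identity is a one-line computation once the Sweedler indices and the $cop$ convention on $H^*$ are carefully tracked, and the only place where one could slip is in identifying $S_{H^{* cop}}$ with $p \mapsto p \circ S^{-1}$ in the proof of (d).
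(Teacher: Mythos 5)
Your proof is correct. The paper states this lemma without proof (it is marked \verb|\qed| as a routine consequence of the definitions), and your direct verification via \eqref{eq:doublemult} and the antipode formula is exactly the standard argument one would supply; in particular you correctly handle the one genuinely delicate point, namely that $S_{H^{*\,cop}} = (S_{H^*})^{-1} = (S_H^{-1})^*$ is the map $p \mapsto p \circ S^{-1}$, which matches the formula $S_{D(H)}(p \bowtie 1) = (p \circ S^{-1}) \bowtie 1$.
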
 

From now on, we suppress the $\bowtie$ notation. 
It is clear that the relations between generators of $H$ and $H^*$ will also be relations in $D(H)$. 
Thus, to achieve an algebra presentation of $D(H)$, it remains to show how elements of $H$ move past those of $H^*$.
We will compute relations giving this ``commutation'' between elements of $H$ and $H^*$ using the following consequence of \eqref{eq:doublemult}: 
For any $p \in H^*$ and $a \in H$, we have in $D(H)$:
\begin{equation} \label{eq:doublemixed}
   ap ~=~ (\com a 1 \succ p \prec S^{-1}(\com a 3))  \com a 2 ~=~ \langle \com p 1, S^{-1}(\com a 3) \rangle \langle \com p 3, \com a 1 \rangle \com p 2  \com a 2.
\end{equation}
The explicit computation of the double of many finite-dimensional, pointed Hopf algebras will be given later in this article (see Section~\ref{subsubsect:hnzmtdouble}, Section~\ref{sect:t421double}, and Appendix~\ref{sect:uqdouble}).

\subsection{Perfect dualities} \label{subsect:duality}

For computing presentations of Drinfel'd doubles, we will first need presentations of dual Hopf algebras, in such a way that we know the dual pairing.
One helpful way for thinking about dual Hopf algebras is perfect dualities, which we recall from \cite[Definition~V.7.1]{kassel}.
Let $H$ and $K$ be Hopf algebras and $\langle \ , \  \rangle$ a bilinear form on $H \times K$.
We say $H$ and $K$ are \emph{in duality}, or that the bilinear form induces a duality between them, if the following hold for any $u,v \in H$ and $x,y \in K$:
\begin{equation}
  \begin{gathered}
    \langle uv, x \rangle = \langle u, \com x 1 \rangle \langle v, \com x 2 \rangle, \quad%
      \langle u, xy \rangle = \langle \com u 1, x \rangle \langle \com u 2, y \rangle, \\ 
    \langle 1, x \rangle = \epsilon_K(x), \quad %
      \langle u, 1 \rangle = \epsilon_H(u), \quad%
      \langle S_H(u), x \rangle = \langle u, S_K(x) \rangle. 
  \end{gathered} \label{eq:duality}
\end{equation}
With $\phi: H \to K^*$ and $\psi: K \to H^*$ defined by $\phi(u)(x) = \langle u, x \rangle = \psi(x)(u)$, we say the duality between $H$ and $K$ is \emph{perfect} if $\phi$ and $\psi$ are injective.
Observe that a perfect duality between finite-dimensional Hopf algebras induces an isomorphism $K \cong H^*$.

\subsection{Yetter-Drinfel'd modules and bosonizations} \label{subsect:ydmodules}

Let $H$ be a Hopf algebra. 
A \emph{(left-left) Yetter-Drinfel'd module} $M$ over $H$ is simultaneously a left $H$-module and a left $H$-comodule, satisfying the compatibility condition
\[
  \rho(h \cdot m) = \com h 1 \com m {-1} S(\com h 3) \tens \com h 2 \cdot \com m 0,
\]
for all $h \in H$ and $m \in M$.
We will denote the category of Yetter-Drinfel'd modules over $H$ by ${}_H^H \mc{YD}$.
If $H =\kk \Gamma$ is the group algebra of a group $\Gamma$, we will write ${}_\Gamma^\Gamma \mc{YD}$ for ${}_{\kk \Gamma}^{\kk \Gamma} \mc{YD}$.

Without going into further detail at this time, we remark that ${}_H^H \mc{YD}$ is a braided monoidal category, with braiding $c_{M,N}: M \tens N \to N \tens M$ given by
\begin{equation} \label{eq:ydbraiding}
   c_{M,N}(m \tens n) = \com m {-1} \cdot n \tens \com m 0.
\end{equation}

Since ${}_H^H \mc{YD}$ is a \textit{braided} monoidal category, the braiding \eqref{eq:ydbraiding} allows us to define Hopf algebras in ${}_H^H \mc{YD}$, typically called \emph{braided Hopf algebras}.
First, a \emph{braided bialgebra $B$ in ${}_H^H \mc{YD}$} is simultaneously an \emph{algebra} and \emph{coalgebra} in ${}_H^H \mc{YD}$ such that the morphisms $\Delta$ and $\epsilon$ are also algebra maps.
Here, the algebra structure of $B \tens B$ is defined using the braiding $c_{B,B}$ in place of the typical twist map $\tau$.
If the identity of a braided bialgebra $B$ has a convolution inverse $S$, which is also a morphism in ${}_H^H \mc{YD}$, then $B$ is called a \emph{braided Hopf algebra in ${}_H^H \mc{YD}$}.

If $B$ is a braided Hopf algebra in ${}_H^H \mc{YD}$, then in particular, $B$ is a left $H$-module algebra and a left $H$-comodule coalgebra.
Thus, $B \tens H$ is a $\kk$-algebra and a $\kk$-coalgebra via the smash product and smash coproduct structures, respectively.
By combining these structures, and using the antipode of $B$ and $H$, we get that $B \tens H$ is in fact a Hopf algebra over $\kk$, which we describe as follows.

\begin{definitiontheorem}[{\cite[Theorems~11.6.7, 11.6.9]{radford}}]
   Let $H$ be a Hopf algebra over $\kk$ and let $B$ be a braided Hopf algebra in ${}_H^H \mc{YD}$.
   Then $B \tens H$ is a Hopf algebra over $\kk$ with 
   \begin{itemize}
      \item
         unit $1_B \tens 1_H$,
      \item
         multiplication $(a \tens h)(b \tens k) = a (\com h 1 \cdot b) \tens \com h 2 k$,
      \item
         counit $\epsilon(b \tens h) = \epsilon_B(b) \epsilon_H(h)$,
      \item
         comultiplication $\Delta(b \tens h) = (\com b 1 \tens \comm b 2 {-1} \com h 1) \tens (\comm b 2 0 \tens \com h 2)$,
      \item
         and antipode $S(b \tens h) = (1 \tens S_H(\com b {-1} h))(S_B(\com b 0) \tens 1)$.
   \end{itemize}
   This Hopf algebra is called the \emph{bosonization} or \emph{biproduct} of $B$ and $H$, and is denoted by $B \# H$. \qed
\end{definitiontheorem}

Bosonizations have become an essential tool in the classification of pointed Hopf algebras, thanks to Radford's abstract characterization of those Hopf algebras that can be realized as bosonizations \cite[Theorem~3]{r-biproducts}.
Andruskiewitsch and Schneider have used Radford's result as a launching point for a very active program of classifying finite-dimensional pointed Hopf algebras \cite{a-ofdha, as-pha}.

For any $V \in {}_H^H \mc{YD}$, there is a canonical graded braided Hopf algebra $\mf B(V) \in {}_H^H \mc{YD}$, called a \emph{Nichols algebra}. 
These were first discovered by Warren D. Nichols and appeared in \cite{nichols}.
For a current survey of the Nichols algebras pertinent to the classification program of finite-dimensional pointed Hopf algebras, see the work of Andruskiewitsch and Angiono \cite{nicholsalgbible}.
If $V$ is a braided vector space of type $(A_1)^{\times \theta}$, then the Nichols algebra $\mf B(V)$ is called a \emph{quantum linear space over $H$} when $\mf B(V) \in {}_H^H \mc{YD}$ \cite{as-qls}.
We study these more in Section~3.

\section{The Taft algebras} \label{sect:taft}

Let $n \geq 2$ and $q \in \kk$ be a primitive $n^{th}$ root of unity.
Recall that the \emph{Taft algebra} $T_n(q)$ is generated by a grouplike element $g$ and a $(g,1)$-skew primitive element $x$, satisfying the following relations:
\[
   g^n = 1, \ \  x^n = 0, \ \ gx = qxg.
\]
Note that $\dim_\kk(T_n(q)) = n^2$.

In this section, we will consider Question~\ref{questions} in the Introduction for the Taft algebras $T_n(q)$.
Recall that Montgomery and Schneider have already answered Question~\ref{questions}(a,b) for actions of the Taft algebras $T_n(q)$ on the algebra $A(T_n(q))$ given in Notation~\ref{not:ah}; see Theorem~\ref{thm:taft}. 
They further answered Question~\ref{questions}(c) on actions of the double $D(T_n(q))$ on $A(T_n(q))$ for the case $n > 2$ as recalled below.

\begin{lemma} \cite[Lemma~4.4]{montschneid} \label{lem:taftdouble}
  The Hopf algebra $D(T_n(q))$ is generated by grouplike elements $g$ and $G$, a $(g,1)$-skew primitive element $x$, and a $(1,G)$-skew primitive element $X$, subject to the relations
  \begin{gather*}
     g^n = G^n = 1, \quad
        x^n = X^n = 0, \quad
        gx = qxg, \quad
        GX = qXG, \\
     gG = Gg, \quad 
        xG = qGx, \quad
        gX = q^{-1}Xg, \quad
        xX = Xx + G - g.
  \end{gather*}
  
  \vspace{-.28in} \qed
  
\end{lemma}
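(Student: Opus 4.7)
The plan is to build $D(T_n(q))$ directly from its coalgebra decomposition $T_n(q)^{*cop} \tens T_n(q)$ and then to read off the multiplicative structure from \eqref{eq:doublemult}--\eqref{eq:doublemixed}. First I would produce a Hopf-algebra presentation of $T_n(q)^*$ itself: define a bilinear pairing on $T_n(q) \times T_n(q)$ by $\langle G,g \rangle = q$, $\langle G,x \rangle = \langle X,g \rangle = 0$, $\langle X,x \rangle = 1$ on generators and extend to the rest of the algebra via the compatibility conditions \eqref{eq:duality}. A direct check shows that the pairing is Hopf (compatible with multiplication, unit, and antipode), and a dimension count shows that it is perfect, so it realizes $T_n(q)^*$ as the Hopf algebra generated by a grouplike $G$ and a $(G,1)$-skew primitive $X$ satisfying $G^n = 1$, $X^n = 0$, and $GX = qXG$. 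Upon passing to $T_n(q)^{*cop}$, the comultiplication of $X$ becomes $X \tens G + 1 \tens X$, so that inside $D(T_n(q))$ the element $X$ is $(1,G)$-skew primitive while $G$ remains grouplike; this, together with the intrinsic relations of $T_n(q)$, accounts for the eight ``single-side'' relations in the statement by Lemma~\ref{lem:gens}.

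Next, the four cross-relations between $\{g,x\}$ and $\{G,X\}$ are forced by the formula $ap = \langle p_{(1)}, S^{-1}(a_{(3)}) \rangle \langle p_{(3)}, a_{(1)} \rangle p_{(2)} a_{(2)}$ from \eqref{eq:doublemixed}. For each pair $(a,p) \in \{g,x\} \times \{G,X\}$, I would expand $\Delta^{(2)}(a)$ and $\Delta^{(2)}(p)$ (the latter with respect to $\Delta_{H^*}$, \emph{not} $\Delta_{H^{*cop}}$), and substitute, using $S^{-1}(g) = g^{-1}$ and $S^{-1}(x) = -q g^{-1} x$. The pairs $(g,G)$, $(g,X)$, $(x,G)$ each collapse to a single nonzero summand and yield $gG = Gg$, $gX = q^{-1} Xg$, and $xG = qGx$. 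The interesting case is $(x,X)$, which yields nine candidate summands from the triple expansions; exactly three survive and contribute $G$ (from $x_{(3)}=x$, $X_{(1)}=X$), $Xx$ (from $x_{(2)}=x$, $X_{(2)}=X$), and $-g$ (from $x_{(1)}=x$, $X_{(3)}=X$), producing $xX = Xx + G - g$.

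Finally, closure is a dimension count. The listed relations suffice to rewrite any word in $\{g,x,G,X\}$ as a $\kk$-linear combination of monomials $G^i X^j x^k g^\ell$ with $0 \leq i,j,k,\ell \leq n-1$, so the presented algebra has dimension at most $n^4 = \dim_\kk T_n(q) \cdot \dim_\kk T_n(q)^* = \dim_\kk D(T_n(q))$. Since the canonical surjection from this presented algebra onto $D(T_n(q))$ is then a dimension-preserving algebra map, it is an isomorphism. The principal obstacle is the bookkeeping in the nine-term computation of $xX$: one must track the sign introduced by $S^{-1}(x)$, correctly use $\Delta_{H^*}$ rather than $\Delta_{H^{*cop}}$ on the $X$-factor, and identify which of the nine pairings $\langle X_{(1)}, S^{-1}(x_{(3)}) \rangle \langle X_{(3)}, x_{(1)} \rangle$ are nonzero.
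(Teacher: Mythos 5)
Your proposal is correct and follows essentially the same route the paper takes: the lemma is cited from Montgomery--Schneider, but it is exactly the case $m=t=1$, $\zeta=q$ of the paper's own computation of $D(H_n(\zeta,m,t))$ (via Corollary~\ref{cor:taftasqls}), which proceeds precisely as you do --- dualize via a perfect pairing, invoke Lemma~\ref{lem:gens} for the one-sided relations, and apply \eqref{eq:doublemixed} for the cross-relations. One small bookkeeping slip in your parenthetical attributions: the surviving summand with $x_{(3)}=x$, $X_{(1)}=X$ contributes $\langle X, S^{-1}(x)\rangle\,\epsilon\, g = -g$, while the one with $x_{(1)}=x$, $X_{(3)}=X$ contributes $\langle X, x\rangle\, G\cdot 1 = +G$ (you have these two swapped), though the resulting relation $xX = Xx + G - g$ is unaffected.
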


Note that $X$ is $(1,G)$-skew primitive in $D(T_n(q))$, whereas it is $(G,1)$-skew primitive in $T_n(q)^* \cong T_n(q)$, because $D(T_n(q))$ contains a copy of $T_n(q)^{* cop}$.

\begin{theorem}\cite[Theorem~4.5]{montschneid} \label{thm:taftext}
   Take $n > 2$.
   Let $A = \kk[u]/(u^n - \beta)$ for $0 \neq \beta \in \kk$ be an $n$-dimensional inner-faithful $T_n(q)$-module algebra with no nonzero nilpotent elements, such that $g \cdot u = qu$ and $x \cdot u = \gamma 1_A$ for $0 \neq \gamma \in \kk$.
   Then, by defining $G \cdot u = q^{-1} u$ and $X \cdot u = \gamma^{-1}(q^{-1} - 1) u^2$, we obtain that $A(T_n(q))$ is a $D(T_n(q))$-module algebra.
   Moreover, all $D(T_n(q))$-module algebra structures on $A(T_n(q))$ are of this form.
   \qed
\end{theorem}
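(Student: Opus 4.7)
The strategy is to determine step by step the actions of the generators $G$ and $X$ of $D(T_n(q))$ on the algebra generator $u \in A(T_n(q))$, using the defining relations from Lemma~\ref{lem:taftdouble} together with the module algebra axioms. Since $u$ generates $A$ as an algebra, and since the coproducts $\Delta(G) = G \tens G$ and $\Delta(X) = 1 \tens X + X \tens G$ give Leibniz-type rules for the action on products, the values $G \cdot u$ and $X \cdot u$ determine the action on all of $A$. Because $G$ is grouplike with $G^n = 1$ and commutes with $g$, it acts as an algebra automorphism preserving each $g$-eigenspace $A_i = \kk u^i$, so $G \cdot u = \mu u$ for some $\mu \in \kk$ with $\mu^n = 1$. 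Applying the cross-relation between $x$ and $G$ to $u$ then pins down $\mu = q^{-1}$.

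Next, the relation $gX = q^{-1}Xg$ applied to $u$ forces $X \cdot u$ to lie in a single $g$-eigenspace of $A$, which under the hypothesis $n > 2$ is precisely $A_2 = \kk u^2$; hence $X \cdot u = \lambda u^2$ for some $\lambda \in \kk$. The defining commutator relation $xX - Xx = G - g$ of $D(T_n(q))$, applied to $u$ and expanded using the Leibniz rule for $X$ together with the known value $x \cdot u = \gamma 1_A$, then reduces to $\gamma \lambda = q^{-1} - 1$, which determines $\lambda = \gamma^{-1}(q^{-1} - 1)$. This establishes the uniqueness half of the theorem.

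For existence, one extends the actions of $G$ and $X$ to all of $A$ via the Leibniz rules induced by their coproducts, and verifies that the remaining relations of $D(T_n(q))$, namely $G^n = 1$, $X^n = 0$, $GX = qXG$, and the cross-commutations, hold as operators on $A$. The module algebra axioms reduce each of these checks to an identity on powers of $u$, which in turn becomes an identity involving $q$-binomial symbols; the key vanishing $X \cdot u^n = 0$ follows from $(n)_{q^{-1}} = 0$, which holds because $q^{-1}$ is a primitive $n$-th root of unity.

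The main obstacle is this verification: one must iterate the Leibniz rule for $X$ on powers of $u$ and carry through the resulting $q$-binomial bookkeeping. The hypothesis $n > 2$ enters essentially in the argument for $X \cdot u$, where it ensures that the $q^2$-eigenspace of $g$ is genuinely the one-dimensional $A_2 = \kk u^2$ rather than coinciding with $A_0$ or spanning a larger subspace; when $n = 2$ this rigidity fails, which is exactly why the Sweedler algebra admits a $\kk$-parameter family of extensions rather than a unique one, cf.\ Proposition~\ref{prop:sweedlerext}.
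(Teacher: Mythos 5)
Your proposal is correct and uses essentially the method this paper itself employs for the analogous results: the paper does not reprove Theorem~\ref{thm:taftext} (it is cited from Montgomery--Schneider), but your argument --- decompose $A$ into $g$-eigenspaces, use the commutation relations of $D(T_n(q))$ to place $G \cdot u$ and $X \cdot u$ in the correct one-dimensional eigenspaces, and then apply $xX - Xx = G - g$ to $u$ to pin down the remaining scalar --- is exactly the strategy of the proofs of Proposition~\ref{prop:sweedlerext} and of the generalization Theorem~\ref{thm:ahnzmt}, and your account of where $n>2$ enters (namely $A_2 \neq A_0$, equivalently $1+q \neq 0$, so the commutator relation actually determines $\lambda$) is consistent with the $\kk$-parameter family in the Sweedler case. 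One caveat: Lemma~\ref{lem:taftdouble} is written in this paper's convention $gx = qxg$, whereas Theorem~\ref{thm:taftext} is quoted in Montgomery--Schneider's convention $xg = qgx$ (the paper flags this discrepancy after Proposition~\ref{prop:minexistence}); if you apply the relation $gX = q^{-1}Xg$ literally together with $g \cdot u = qu$ and $x \cdot u = \gamma 1_A$, the eigenspace computation lands $X \cdot u$ in $A_0$ rather than $A_2$, so you must fix one self-consistent set of relations before running your steps --- once you do, every step you outline, including the verification $X \cdot u^n = \lambda\,(n)_{q^{-1}} u^{n+1} = 0$ and the operator identity $X^n = 0$ via $q$-factorials, goes through.
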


The original theorem in \cite{montschneid} has the assumption $n > 1$, not $n > 2$.
We now discuss this disparity.

\subsection{The Sweedler algebra ($n = 2$)} \label{subsect:sweedler}

We begin with the following remark pertaining to Theorem~\ref{thm:taftext} in the case when $n = 2$.
\begin{remark}
   The proof of Theorem~\ref{thm:taftext} in \cite{montschneid} fails for $n = 2$ at the point when one considers the action of $H^{*cop} \subset D(H)$, and applies \cite[Theorem~2.2]{montschneid}. 
   To specify the action of $H^{*cop}$, one uses integers $0 \leq s, t \leq n-1$ with $t(1-s) \equiv 1 \mod n$. 
   It is shown then that $t = n-1$, from which it is concluded that $s = 2$. 
   This is valid if $ n > 2 $. 
   However, for $n = 2$, we get that $s = 0$, and \cite[Theorem~2.2]{montschneid} actually gives us different information than when $n > 2$. 
   We explore here the case when $n = 2$, that is, when $H$ is the Sweedler Hopf algebra, $T_2(-1)$.
\end{remark}

Since Theorem~\ref{thm:taft} applies to the case $n=2$, we know all actions of $T_2(-1)$ on $A(T_2(-1))$ as in Notation~\ref{not:ah}.
As an algebra, $A(T_2(-1)) \cong \kk[u]/(u^2 - 1)$, with the actions given by $g \cdot u = -u$ and $x \cdot u = \gamma 1_A$ for some nonzero $\gamma \in \kk$.
Considering the remark above, we now examine Question~\ref{questions}(c) for $H = T_2(-1)$.

\begin{proposition} \label{prop:sweedlerext}
   Recall the notation of Lemma~\ref{lem:taftdouble} for $n=2$, and thus $q=-1$.
   Fix an action of $T_2(-1)$ on $A(T_2(-1)) = \kk[u]/(u^2 - 1)$ as in Theorem~\ref{thm:taft},
   \[
      g \cdot u = -u, \quad x \cdot u = \gamma 1_A,
   \]
   for some nonzero $\gamma \in \kk$.
   Then, for any $\delta \in \kk$, by defining 
   \[
      G \cdot u = -u, \quad X \cdot u = \delta 1_A,
   \]
   we obtain that $A(T_2(-1))$ is a $D(T_2(-1))$-module algebra. 
   Moreover, all extensions of the action of $T_2(-1)$ on $A(T_2(-1))$ to $D(T_2(-1))$ are of this form.
\end{proposition}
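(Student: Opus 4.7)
The plan is to reduce the problem of finding all $D(T_2(-1))$-module algebra structures on $A = A(T_2(-1)) = \kk[u]/(u^2-1)$ extending the given $T_2(-1)$-action to the problem of determining the actions of the generators $G$ and $X$ of $D(T_2(-1))$ on the single algebra generator $u$. Since $A$ is generated by $u$, any such extension is uniquely determined by the pair $(G \cdot u, X \cdot u)$, so the task reduces to identifying which pairs are consistent with the relations of Lemma~\ref{lem:taftdouble} and the module algebra axioms.

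The first step is to exploit the $g$-weight decomposition $A = A_0 \oplus A_1 = \kk 1_A \oplus \kk u$ from Remark~\ref{rem:astructure}. The relation $gG = Gg$ forces $G$ to preserve each weight space, so $G \cdot u = \alpha u$ for some $\alpha \in \kk$, and $G^2 = 1$ gives $\alpha = \pm 1$. Applying $xG = -Gx$ to $u$ yields $\alpha \gamma 1_A = -\gamma 1_A$; since $\gamma \neq 0$, this pins down $\alpha = -1$. Analogously, $gX = -Xg$ applied to $u$ forces $g \cdot (X \cdot u) = X \cdot u$, so $X \cdot u \in A_0 = \kk 1_A$; write $X \cdot u = \delta 1_A$ with $\delta \in \kk$ arbitrary for now.

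The next step is to verify that \emph{every} $\delta \in \kk$ produces a valid module algebra. The remaining relations $GX = -XG$, $X^2 = 0$, and $xX - Xx = G - g$ should be checked on $u$: the first two are immediate, and the crucial one collapses trivially because $x \cdot 1_A = X \cdot 1_A = 0$ (via the module-algebra unit axiom together with $\epsilon(x) = \epsilon(X) = 0$), while $(G - g) \cdot u = -u - (-u) = 0$. The module-algebra axiom for $X$ on $u^2 = 1_A$ then reads $u(X \cdot u) + (X \cdot u)(G \cdot u) = \delta u - \delta u = 0$, matching $X \cdot 1_A$; the other module-algebra axioms follow from the preexisting $T_2(-1)$-structure and from $G$ being grouplike.

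I expect the main subtlety to be recognizing why a whole family of extensions appears here, in contrast to the unique extension of Theorem~\ref{thm:taftext} for $n > 2$. This is a degeneracy of the relation $xX - Xx = G - g$ at $q = -1$: for general $n$, expanding this relation on $u$ via the skew Leibniz rule (using $x \cdot u^2 = \gamma(q+1)u$) produces the constraint $\delta \gamma (q+1) = q^{-1} - q$, which determines $\delta$ uniquely from $\gamma$. At $q = -1$ both sides vanish identically, so no constraint on $\delta$ survives and one obtains the $\kk$-parameter family claimed in the proposition.
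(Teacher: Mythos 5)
Your proof is correct and follows essentially the same route as the paper: decompose $A$ into $g$-eigenspaces, use $gG = Gg$ together with $xG = -Gx$ to force $G \cdot u = -u$, use $gX = -Xg$ to place $X \cdot u$ in $\kk 1_A$, and observe that $(xX - Xx) \cdot u = (G-g)\cdot u = 0$ imposes no constraint on $\delta$. Your extra verification of the converse (which the paper dismisses as "easily verified") and your closing remark locating the source of the $\kk$-parameter family in the degeneration of the commutator relation at $q = -1$ are both sound and consistent with the paper's own Remark preceding the proposition.
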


   \begin{proof}
      That $A(T_2(-1))$ is a $D(T_2(-1))$-module algebra with the given action of $G$ and $X$ is easily verified, so we show that all extensions of the action of $T_2(-1)$ on $A(T_2(-1))$ to an action $D(T_2(-1))$ are of this form.
      Fix an action of $T_2(-1)$ on $A(T_2(-1))$.
      That is, we have $A \defeq A(T_2(-1)) = \kk[u]/(u^2-1)$ with the action of $T_2(-1)$ on $A$ given by $g \cdot u = -u$ and $x \cdot u = \gamma 1_A$.
      We can decompose $A$ by the eigenspaces of the action of $g$ as in Remark~\ref{rem:astructure}: $A = A_0 \oplus A_1$ with $A_0 = \kk 1_A$ and $A_1 = \kk u$.
      Now assume this action can be extended to an action of $D(T_2(-1))$.
      Since $A$ is a $D(T_2(-1))$-module algebra, ${g \cdot (G \cdot u) = G \cdot (g \cdot u) = - G \cdot u }$. 
      Hence, $G \cdot u \in A_1$, so $G \cdot u = \alpha u$ for some $ \alpha \in \kk $. 
      Also, we have $x \cdot ( G \cdot u ) = - G \cdot ( x \cdot u ) = - G \cdot \gamma 1_A = - \gamma 1_A$, so $ \alpha = -1.$ 
      Finally, $g \cdot (X \cdot u) = - X \cdot (g \cdot u) = X \cdot u$ implies that $X \cdot u \in A_0 = \kk 1_A$, so $X \cdot u = \delta 1_A$ for some $\delta \in \kk$.
      (Note that $(xX - Xx) \cdot u = (G - g) \cdot u = 0$, so no restrictions on $\delta$ need to be imposed.)
   \end{proof}

All the results about the Taft algebras, including the Sweedler algebra --- Montgomery and Schneider's results (stated in Theorem~\ref{thm:taft} and Theorem~\ref{thm:taftext}) as well as Proposition~\ref{prop:sweedlerext} --- can be realized as a corollary of results about a generalization of Taft algebras, which we consider next.

\section{$H_n(\zeta,m,t)$, a coradically graded generalization of Taft algebras} \label{sect:qls}

We wish to answer Question~\ref{questions} for a family of coradically graded Hopf algebras that contains the Taft algebras. 
In the language of Nichols algebras, $T_n(q)$ is of Cartan type $A_1$ and has rank $1$.
In fact, $T_n(q) \cong \mf B(V) \# \kk \Gamma$, where $(V,c) = \kk x$ is a one-dimensional braided vector space with braiding $c(x \tens x) = q x \tens x$, $\Gamma = \langle g \rangle$ the cyclic group of order $n$, $g \cdot x = qx$, and $\rho(x) = g \tens x$.
That is, $T_n(q)$ is a bosonization of the quantum linear space $\mf B(V)$ of rank 1.
Thus, we consider more generally all rank $1$ quantum linear spaces over finite cyclic groups.

\subsection{The Hopf Algebras $H_n(\zeta, m, t)$} \label{subsect:hnzmt}

We describe all bosonizations of quantum linear spaces of rank 1 over finite cyclic groups.
In our consideration of these, we will need the following result in group theory.

\begin{lemma} \label{lem:numtheory}
   If $G$ is a cyclic group of order $n$, and an element $g \in G$ has order $n/k$ for some $k | n$, then there exists a generator $y$ of $G$ such that $g = y^k$.
   \qed
\end{lemma}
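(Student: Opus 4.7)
The plan is to reduce the statement to a standard number-theoretic fact about congruences. Fix a generator $z$ of $G$, so every element is a power of $z$. Write $g = z^a$ for some $a \in \Z/n\Z$. Since the order of $z^a$ in a cyclic group of order $n$ equals $n/\gcd(a,n)$, the hypothesis $\ord(g) = n/k$ gives $\gcd(a,n) = k$. In particular, $k \mid a$, so we may write $a = kb$ with $\gcd(b, n/k) = 1$.

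Any other generator of $G$ has the form $y = z^c$ with $\gcd(c, n) = 1$, and for such $y$ we have $y^k = z^{ck}$. Thus $y^k = g$ is equivalent to $ck \equiv kb \pmod n$, which simplifies to the congruence
\[
  c \equiv b \pmod{n/k}.
\]
So the problem reduces to the following: given $b$ with $\gcd(b, n/k) = 1$, produce an integer $c$ simultaneously satisfying $c \equiv b \pmod{n/k}$ and $\gcd(c,n) = 1$.

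Set $m = n/k$ and let $S$ be the set of primes dividing $n$ but not $m$. If $p$ is a prime divisor of $n$ with $p \mid m$, then $p \nmid b$, so any $c \equiv b \pmod m$ automatically satisfies $p \nmid c$. Hence it remains only to arrange that $p \nmid c$ for each $p \in S$. Since the primes in $S$ are coprime to $m$ by construction, the Chinese Remainder Theorem lets us choose $c$ with $c \equiv b \pmod m$ and $c \equiv 1 \pmod p$ for every $p \in S$. Such a $c$ is coprime to every prime divisor of $n$, so $\gcd(c,n) = 1$, and then $y \defeq z^c$ is the desired generator with $y^k = g$.

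I do not anticipate any real obstacle; the only subtle point is verifying that the CRT step can be carried out, which hinges on the observation that primes in $S$ are automatically coprime to $m$, so the moduli are pairwise coprime.
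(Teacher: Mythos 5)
Your argument is correct and complete. The paper states this lemma without proof (it is given as a standard fact, with the proof omitted), so there is no argument to compare against; your reduction to the congruence $c \equiv b \pmod{n/k}$ with $\gcd(c,n)=1$ is the natural one, and the CRT step --- which amounts to proving surjectivity of the reduction map $(\Z/n\Z)^\times \to (\Z/(n/k)\Z)^\times$ --- is handled correctly, including the subtle point that primes of $n$ not dividing $n/k$ are coprime to $n/k$, so the moduli are pairwise coprime.
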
 

Let $\Gamma$ be a finite cyclic group of order $n$.       
In the notation of \cite{as-qls}, a quantum linear space of rank 1 over $\Gamma$, denoted $\mc R (g, \chi)$, is entirely determined by a choice of $g \in \Gamma$ and $\chi \in \widehat \Gamma$ such that $\chi(g) \neq 1$.
Fix a non-identity element $g \in \Gamma$.
Then $g$ has order $n/m$ for some $m | n$, and by Lemma~\ref{lem:numtheory}, we can choose a generator $y$ of $\Gamma$ so that $g = y^m$.
Similarly, fix a non-identity element $\chi \in \widehat \Gamma$.
Then $\chi(y)$ is an $n^{th}$ root of unity, say of order $n/t$ with $t | n$, and again by Lemma~\ref{lem:numtheory}, we can choose a primitive $n^{th}$ root of unity, $\zeta$, such that $\chi(y) = \zeta^t$.
We have $ \chi(g) = \zeta^{mt}$, and $N = \ord(\chi(g)) = \frac{n}{\text{gcd}(n, mt)}$.
Our assumption that $\chi(g) \neq 1$ means precisely that $n \nmid mt$.
In this case, $\mc R (g, \chi) $ has a single generator, $x$, and a single relation, $x^N = 0$. 
By definition, $\mc R (g, \chi)$ is a braided Hopf algebra in ${}_\Gamma ^\Gamma \mc{YD}$ with $\rho(x) = g \tens x = y^m \tens x$ and $y \cdot x = \chi(y) x = \zeta^t x$.
Therefore, the bosonization $\mc R(g, \chi) \#\kk \Gamma$ is a Hopf algebra.
The structure of $\mc R(g, \chi) \#\kk \Gamma$ is similar to that of a Taft algebra, as we now describe.

\begin{definitionproposition} \label{def:hnzmt}
	Let $m,t$ be positive integer divisors of $n$ such that $n \nmid mt$ and let $\zeta$ be a primitive $n^{th}$ root of unity.
	Define $H_n(\zeta, m, t)$ as the $\kk$-algebra generated by $y$ and $x$, subject to the relations
	\[
	  y^n = 1, \quad \quad x^N = 0 \text{ for } N = \ord(\zeta^{mt}), \quad \quad
	    yx = \zeta^t xy.
	\]
   The algebra $H_n(\zeta, m, t)$ has a unique Hopf algebra structure determined by 
   \begin{gather*}
      \Delta(y) = y \tens y, \quad
      \Delta(x) = y^m \tens x + x \tens 1, \quad 
      \epsilon(y) = 1, \quad
      \epsilon(x) = 0, \quad
      S(y) = y^{-1}, \quad
      S(x) = -y^{-m} x.
   \end{gather*}
   Here, $H_n(\zeta, m, t) \cong \mc R(g, \chi) \#\kk\Gamma$, where $g = y^m$ and $\chi(y) = \zeta^t$.
   Such Hopf algebras have dimension $Nn$. \qed
\end{definitionproposition}

For a fixed $n$, a natural first question is whether each choice of $\zeta$, $m$, and $t$ determines a unique Hopf algebra.
Unsurprisingly, the answer is negative; however, an isomorphism class does uniquely determine $n$, $m$, and $t$.
To show this, we require the following lemma characterizing certain primitive elements.

\begin{lemma}{\cite[Corollary~5.3]{as-qls}} \label{lem:primitives}
   Let $0 \leq b < n$.
   Then 
   \[
      P_{y^b, 1}(H_n(\zeta, m, t)) = \begin{cases}
        \kk x +\kk(y^b - 1), & \text{ if } b \equiv m \mod n \\
        \kk (y^b - 1), & \text{ otherwise}.
      \end{cases}
   \]
   
   \vspace{-.25in} \qed
\end{lemma}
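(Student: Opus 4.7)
The plan is to determine $P_{y^b,1}(H_n(\zeta,m,t))$ directly from the PBW basis $\{y^i x^j : 0 \le i < n,\ 0 \le j < N\}$, where $N = \ord(\zeta^{mt})$. First I would obtain an explicit comultiplication formula. Because $yx = \zeta^t xy$ in $H$, the elements $y^m \tens x$ and $x \tens 1$ of $H \tens H$ satisfy $(x \tens 1)(y^m \tens x) = \zeta^{-mt}(y^m \tens x)(x \tens 1)$, so the quantum binomial identity \eqref{eq:skewbinom} yields
\[
  \Delta(y^i x^j) \;=\; \sum_{k=0}^{j} {\binom{j}{k}}_{\zeta^{-mt}} \, y^{\,i + m(j-k)}\, x^k \tens y^i x^{j-k}.
\]

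Next I would write a general element as $z = \sum_{i,j} \alpha_{ij}\, y^i x^j$, impose $\Delta(z) = y^b \tens z + z \tens 1$, and split the equation by the total $x$-degree on the two tensor factors, which is preserved summand by summand in the formula above. For each fixed $j$, the right-hand side is supported only on $x$-bidegrees $(0,j)$ and $(j,0)$, so any ``middle'' contribution of bidegree $(k, j-k)$ with $0 < k < j$ on the left must vanish on its own. Three coefficient comparisons then finish the $j \ge 1$ case: matching $k = j$ yields $\sum_i \alpha_{ij}\, y^i x^j \tens (y^i - 1) = 0$, forcing $\alpha_{ij} = 0$ for $i \ne 0$; matching $k = 0$ gives $\alpha_{0j}(y^{mj} - y^b) = 0$, so $\alpha_{0j} = 0$ unless $mj \equiv b \pmod n$; and for $j \ge 2$ the middle terms produce linearly independent monomials $y^{m(j-k)} x^k \tens x^{j-k}$ with coefficients $\alpha_{0j}\, {\binom{j}{k}}_{\zeta^{-mt}}$, forcing $\alpha_{0j} = 0$. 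The $j = 0$ piece reduces to the classical fact that $P_{y^b,1}(\kk\Gamma) = \kk(y^b - 1)$ in a group algebra.

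Combining these constraints leaves precisely $z \in \kk(y^b - 1)$ in general, with an additional $\kk x$ summand exactly when $j = 1$ and $m \equiv b \pmod n$, which is the statement of the lemma. The most delicate point of the argument is the non-vanishing of the middle $q$-binomials in the third comparison: since $0 < k < j < N$, each $q$-integer $(i)_{\zeta^{-mt}}$ for $1 \le i \le j$ is nonzero, so the relevant quantum factorials, and hence ${\binom{j}{k}}_{\zeta^{-mt}}$, are nonzero. This is the only step that exploits the explicit order of $\zeta^{mt}$, and is what cuts the skew-primitives off at $x$-degree $1$.
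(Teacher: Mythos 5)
Your argument is correct and complete. Note that the paper does not actually prove this lemma --- it is quoted from Andruskiewitsch--Schneider \cite[Corollary~5.3]{as-qls} --- but your direct coefficient comparison against the bigrading of $H \tens H$ by $x$-degree is exactly the technique the paper itself uses for its analogous in-house computations of skew-primitives (the lemma for $T(4,2,1)$ and Proposition~\ref{prop:uqprims} for $u_q(\mf{sl}_2)$): expand $\Delta$ on the PBW basis via the quantum binomial formula, compare the $\blank \tens 1$, $\blank \tens x^j$, and middle-bidegree terms, and use the non-vanishing of ${\binom j k}_{\zeta^{-mt}}$ for $0<k<j<N$ to kill all $x$-degrees $\geq 2$. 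The one point worth stating explicitly for completeness is the trivial reverse inclusion, i.e.\ that $x$ and $y^b-1$ do lie in the asserted spaces of skew-primitives, which your phrasing leaves implicit.
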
 

\begin{proposition}
   Let $m, \wh m, t, \wh t$ be positive divisors of $n$ such that $n$ divides neither $mt$ nor $\wh m \wh t$.
   Let $\zeta, \wh \zeta$ be primitive $n^{th}$ roots of unity in $\kk$. 
   Then $H_n(\zeta, m, t) \cong H_{\wh n}(\wh \zeta, \wh m, \wh t)$ if and only if $n = \wh n$, $m = \wh m$, $t = \wh t$, and there exists $ f \in (\Z / n\Z)^\times$ such that $(\wh \zeta)^{ft} = \zeta^t$ and $fm \equiv m \mod n$.
   As a consequence, for fixed $n \in \N$ and a fixed primitive $n^{th}$ root of unity $\zeta$, each choice of $m,t \in \N$ with both dividing $n$ and $n \nmid mt$ yields a unique isomorphism class of Hopf algebras $H_n(\zeta,m,t)$.
\end{proposition}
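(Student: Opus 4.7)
The plan is to establish both implications of the biconditional, leveraging the pointed structure of $H_n(\zeta, m, t)$ with group of grouplikes $\langle y \rangle \cong \Z/n\Z$ together with the explicit description of skew-primitives in Lemma~\ref{lem:primitives}.

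For the reverse direction, given $n = \wh n$, $m = \wh m$, $t = \wh t$, and $f \in (\Z/n\Z)^\times$ satisfying $\wh \zeta^{ft} = \zeta^t$ and $fm \equiv m \mod n$, I will define a Hopf algebra map $\phi: H_n(\zeta, m, t) \to H_n(\wh \zeta, m, t)$ on generators by $y \mapsto \wh y^f$ and $x \mapsto \wh x$. The two explicit conditions on $f$ are precisely what is needed to preserve the defining relations: $\wh \zeta^{ft} = \zeta^t$ preserves $yx = \zeta^t xy$ via $\wh y^f \wh x = \wh \zeta^{ft} \wh x \wh y^f$, while $fm \equiv m \mod n$ preserves the coproduct $\Delta(x) = y^m \tens x + x \tens 1$ via $\wh y^{fm} = \wh y^m$. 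The relations $y^n = 1$ and $x^N = 0$ with $N = n/\gcd(mt,n)$ are automatic, since $f$ is a unit and $N$ depends only on $(n, mt)$. Bijectivity follows by inverting with $f^{-1}$, which inherits analogous conditions.

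For the forward direction, suppose $\phi: H_n(\zeta, m, t) \to H_{\wh n}(\wh \zeta, \wh m, \wh t)$ is a Hopf isomorphism. Since both are pointed, $\phi$ restricts to an isomorphism on grouplikes, yielding $n = \wh n$ and $\phi(y) = \wh y^f$ for some unit $f$ mod $n$. Applying Lemma~\ref{lem:primitives} to $\phi(x) \in P_{\wh y^{fm}, 1}$ and using surjectivity of $\phi$ (which rules out $\phi(x)$ lying in the span of grouplikes, since that span has dimension $n < nN = \dim H_n$) forces $fm \equiv \wh m \mod n$ and $\phi(x) = \alpha \wh x + \beta(\wh y^{\wh m} - 1)$ with $\alpha \neq 0$; since $\gcd(fm, n) = \gcd(m, n) = m$ and $\gcd(\wh m, n) = \wh m$, this yields $m = \wh m$. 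Pushing $yx = \zeta^t xy$ through $\phi$ and expanding produces two constraints: scaling by $\alpha$ yields $\wh \zeta^{f \wh t} = \zeta^t$, and the residual terms force $\beta = 0$ (using $\zeta^t \neq 1$ from $t < n$ and $\wh y^{m+f} \neq \wh y^f$ from $m < n$). Comparing orders on both sides of $\wh \zeta^{f \wh t} = \zeta^t$, which are $n/\wh t$ and $n/t$ respectively (since $f$ is coprime to $n$), yields $t = \wh t$. The corollary then follows immediately by specializing to $\wh \zeta = \zeta$.

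The main obstacle I anticipate is the careful bookkeeping in the analysis of $\phi(x)$: one must combine Lemma~\ref{lem:primitives} with the gcd and order arguments at just the right moments to extract all four conclusions ($m = \wh m$, $t = \wh t$, $\beta = 0$, and the two conditions on $f$) from only the coproduct and commutation relations, without discarding the $\beta$-term prematurely.
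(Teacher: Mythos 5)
Your proposal is correct and follows essentially the same route as the paper: the explicit map $y \mapsto \wh y^f$, $x \mapsto \wh x$ for sufficiency, and for necessity the restriction to grouplikes, Lemma~\ref{lem:primitives} plus surjectivity to pin down $\phi(x)$ and force $fm \equiv \wh m$, and the relation $yx = \zeta^t xy$ to kill $\beta$ and extract $(\wh\zeta)^{f\wh t} = \zeta^t$. Your minor variations (a gcd computation for $m = \wh m$ and an order comparison for $t = \wh t$, where the paper writes $\wh\zeta = \zeta^e$ and reuses the unit argument) are equivalent to the paper's steps.
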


   \begin{proof}
      Let $y,x$ denote the generators of $H_n(\zeta, m, t)$, and $\hat y, \hat x$ the generators of $H_{\wh n}(\wh \zeta, \wh m, \wh t)$.
      Assume the conditions on $\wh n$, $\wh m$, $\wh t$, and $f$.
      The isomorphism between the two is defined by sending $y$ to $\wh y^f$ and $x$ to $\wh x$.
      One can easily check that this defines a Hopf algebra isomorphism.
      
      On the other hand, suppose $H_n(\zeta, m, t)$ and $H_{\wh n}(\wh \zeta, \wh m, \wh t)$ are isomorphic and let $\phi$ denote an isomorphism between them.
      By counting grouplike elements, $n = \wh n$.
      Moreover, $\phi(y)$ must be a grouplike element of order $n$.
      Thus, there exists $f \in (\Z/n\Z)^\times$ such that $\phi(y) = \wh y ^f$.
      Since $(\phi \tens \phi) \circ \Delta = \Delta \circ \phi$, we must have $\phi(x) \in P_{\wh y^{fm}, 1}(H_{\wh n}(\wh \zeta, \wh m, \wh t))$.
      By Lemma~\ref{lem:primitives}, 
      \[
         P_{\wh y^{fm}, 1}(H_{\wh n}(\wh \zeta, \wh m, \wh t)) = \begin{cases}
           \kk \wh x +\kk (\wh y^{fm} - 1), & fm \equiv \wh m \mod n \\
           \kk (\wh y^{fm} - 1), & \text{ otherwise.}
         \end{cases}
      \]
      Since $\phi(x)$ and $\wh y^f$ must generate $H_{\wh n}(\wh \zeta, \wh m, \wh t)$, it must be that $fm \equiv \wh m \mod n$ and $\phi(x) = \alpha \wh x + \beta (\wh y^{fm} - 1)$ for some $\alpha, \beta \in \kk$ with $\alpha \neq 0$.
      Now, since $m$ and $\wh m$ both divide $n$, and $f$ is a unit mod $n$, the equation $fm \equiv \wh m \mod n$ implies $m = \wh m$.
      We must have 
      \begin{align*}
         0 &= \phi(yx - \zeta^t xy) 
         ~=~ \wh y^f (\alpha \wh x + \beta ( \wh y^{fm} - 1)) - \zeta^t (\alpha \wh x + \beta ( \wh y^{fm} - 1)) \wh y^f \\
         &= ((\wh \zeta)^{f \wh t} - \zeta^t) \alpha \wh x \wh y^f + (1 - \zeta^t) \beta \wh y^f (\wh y^{fm} - 1).
      \end{align*}
      Thus, since $\zeta^t \neq 1$ and $\wh y^{fm} \neq 1 \  (\text{as } n \nmid mt$), we must have $\beta = 0$.
      Also, since $\alpha \neq 0$, we have $(\wh \zeta)^{f \wh t} = \zeta^t$.
      Since $\zeta$ and $\wh \zeta$ are primitive $n^{th}$ roots of unity, $\wh \zeta = \zeta^e$ for some $e \in (\Z / n\Z)^\times$.
      Therefore, $ ef\wh t \equiv t \mod n$, and just as for $m=\wh m$, we see that $t = \wh t$.
   \end{proof}   
   
Not only do the Hopf algebras just presented include Taft algebras; they also include the coradically graded generalized Taft algebras.

\begin{definition} \label{def:gentaft}
  For natural numbers $n,N$ satisfying $N \mid n$, a primitive $N^{th}$ root of unity $q \in \kk$, and $\alpha \in \kk$ arbitrary, the \emph{generalized Taft algebra} $T(n,N,\alpha)$ is the Hopf algebra generated by a grouplike element $g$ and a $(g,1)$-skew primitive element $x$, subject to the relations
   \[
      g^n = 1, \quad x^N = \alpha (g^N - 1), \quad gx = qxg.
   \]
\end{definition}
   
\begin{proposition} \label{prop:gtaftasqls}
   The Hopf algebra $H_n(\zeta, m, t)$ is isomorphic to a generalized Taft algebra of the form $T(n, N, 0)$ if and only if $m=1$. 
   In this case, $q = \zeta^t$, and $N = n/t$.
   Moreover, any generalized Taft algebra of the form $T(n, N, 0)$ can be realized as such.
\end{proposition}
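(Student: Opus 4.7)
The plan is to prove the \emph{moreover} clause before the backward direction, because it provides the bridge needed to invoke the isomorphism classification of the preceding proposition.

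For the forward direction, assume $m = 1$, and set $q \defeq \zeta^t$ and $N \defeq n/t$. Since $\zeta$ is a primitive $n^{th}$ root of unity and $t \mid n$, the element $q$ has order exactly $n/t = N$, so it qualifies as the ``$q$'' in the definition of $T(n, N, 0)$. I claim the assignment $y \mapsto g$, $x \mapsto x$ extends to a Hopf algebra isomorphism $H_n(\zeta, 1, t) \to T(n, N, 0)$: the defining relations $y^n = 1$, $x^N = 0$, and $yx = \zeta^t xy$ map respectively to $g^n = 1$, $x^N = 0$, and $gx = qxg$, while (crucially using $m = 1$) the coproduct $\Delta(x) = y \tens x + x \tens 1$ matches $g \tens x + x \tens 1$. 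Both Hopf algebras have dimension $Nn$, so the surjection is automatically an isomorphism.

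For the moreover clause, I need to show that every $T(n, N, 0)$ with its given primitive $N^{th}$ root $q$ arises as $H_n(\wh\zeta, 1, n/N)$ for some primitive $n^{th}$ root $\wh\zeta$ satisfying $\wh\zeta^{n/N} = q$. Fix any primitive $n^{th}$ root $\zeta_0$; then $\zeta_0^{n/N}$ is a primitive $N^{th}$ root, so $q = (\zeta_0^{n/N})^k$ for some $k$ with $\gcd(k, N) = 1$. It suffices to lift $k$ to an integer $a$ satisfying $a \equiv k \pmod N$ and $\gcd(a, n) = 1$; then $\wh\zeta \defeq \zeta_0^a$ is a primitive $n^{th}$ root with $\wh\zeta^{n/N} = q$. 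Existence of such $a$ is a direct CRT exercise: for primes $p \mid N$, the condition $a \equiv k \pmod N$ already forces $p \nmid a$ since $\gcd(k, N) = 1$; for primes $p \mid n$ with $p \nmid N$, one may independently prescribe $a \not\equiv 0 \pmod p$.

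For the backward direction, suppose $H_n(\zeta, m, t) \cong T(n, N, 0)$. Applying the moreover clause, $T(n, N, 0) \cong H_n(\wh\zeta, 1, n/N)$, and composing yields $H_n(\zeta, m, t) \cong H_n(\wh\zeta, 1, n/N)$. The isomorphism classification from the preceding proposition (which forces $m = \wh m$ under any such isomorphism) then gives $m = 1$, and simultaneously identifies $t = n/N$, consistent with the ``in this case'' assertion. The only genuinely non-formal step in the whole proof is the CRT lift in the moreover step; everything else is a matter of matching presentations and invoking the preceding classification.
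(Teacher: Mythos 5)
Your proof is correct. The forward direction and the realization of every $T(n,N,0)$ as some $H_n(\zeta,1,n/N)$ are essentially the paper's argument; the one place you go beyond the paper is in justifying the existence of a primitive $n^{th}$ root $\wh\zeta$ with $\wh\zeta^{n/N}=q$ via a CRT lift, a step the paper simply asserts (``choose a primitive $n^{th}$ root of unity $\zeta$ such that $\zeta^t = q$''), so your added detail is welcome rather than redundant. Where you genuinely diverge is the backward direction: the paper analyzes an isomorphism $\phi: T(n,N,0)\to H_n(\zeta,m,t)$ directly, using Lemma~\ref{lem:primitives} to force $\phi(x)$ into $P_{y^{fm},1}$ and concluding $m\equiv e \bmod n$ for a unit $e$, hence $m=1$ since the only unit dividing $n$ is $1$; you instead compose with the ``moreover'' isomorphism to get $H_n(\zeta,m,t)\cong H_n(\wh\zeta,1,n/N)$ and invoke the isomorphism classification of the preceding proposition to read off $m=1$ and $t=n/N$. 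Since that classification was itself proved with the same skew-primitive analysis, the two routes are morally equivalent, but yours avoids re-running the argument and is arguably cleaner; the paper's is self-contained at the cost of repetition. The only caveat worth a sentence in your write-up is that applying the classification to $H_n(\wh\zeta,1,n/N)$ tacitly requires $n \nmid \wh m\wh t = n/N$, i.e.\ $N>1$, which holds because $H_n(\zeta,m,t)$ is noncommutative (equivalently, has dimension $Nn$ with $N = \ord(\zeta^{mt})\ge 2$) and so cannot be isomorphic to the group algebra $T(n,1,0)$.
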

 
   \begin{proof}
      Assume $m = 1$.
      Then $N = n/t$ and $q = \zeta^t$ is a primitive $N^{th}$ root of unity by definition.
      Now, let $g = y$ and note that $x$ is $(g,1)$-skew primitive.
      One easily checks that $g$ and $x$ satisfy all the relations of $T(n, N, 0)$, so by a dimension count, the two are isomorphic.
      
      On the other hand, suppose $\phi: T(n, N, 0) \to H_{n}(\zeta, m, t)$ is an isomorphism.
      By considering the groups of grouplike elements, there exists $e \in (\Z / n\Z)^\times$ such that $\phi(g) = y^e$.
      Since $\phi$ is a map of coalgebras, $\phi(x)$ must be $(y^e, 1)$-skew primitive.
      By Lemma~\ref{lem:primitives}, 
      \[
         P_{y^e, 1}(H_{n}(\zeta, m, t)) = \begin{cases}
            \kk x + \kk (y^e - 1), & \text{if } e \equiv m \mod n \\
            \kk (y^e - 1), & \text{otherwise}.
         \end{cases}
      \]
      Since $\phi(g)$ and $\phi(x)$ must generate $H_{n}(\zeta, m, t)$, we must have $m \equiv e \mod n$.
      Thus, since the only unit mod $n$ that divides $n$ is 1, we get $m = 1$.
      
      Now, let $T(n, N, 0)$ be a generalized Taft algebra. 
      By definition, $N$ divides $n$, and $T(n, N, 0)$ is generated by a grouplike element $g$ and a $(g, 1)$-skew primitive element $x$ subject to the relations $g^n = 1$, $x^N = 0$, and $gx = qxg$ for some primitive $N^{th}$ root of unity $q$.
      Let $t = n/N$ and choose a primitive $n^{th}$ root of unity $\zeta$ such that $\zeta^t = q$.
      It is now easy to see that $T(n, N, 0)$ is precisely $H_n(\zeta, 1, t)$.
   \end{proof}
   
Now the following consequence is clear.
   
\begin{corollary} \label{cor:taftasqls}
   The Hopf algebra $H_n(\zeta, m, t)$ is isomorphic to a Taft algebra if and only if $m=t=1$. 
   In that case, $H_n(\zeta, m, t) \cong T_n(\zeta)$.
   If, further, $n$ is prime, then every Hopf algebra of the form $H_n(\zeta,m,t)$ is a Taft algebra.
   \qed
\end{corollary}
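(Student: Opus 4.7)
The plan is to deduce this corollary as a direct consequence of Proposition~\ref{prop:gtaftasqls}, after observing that a Taft algebra $T_n(q)$ is precisely the generalized Taft algebra $T(n,n,0)$ in the notation of Definition~\ref{def:gentaft} (that is, with $N = n$ and $\alpha = 0$). Under this identification, Proposition~\ref{prop:gtaftasqls} already handles the generalized Taft case and only needs to be specialized.

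For the first statement, I would argue both directions. If $H_n(\zeta,m,t) \cong T_n(q)$ for some primitive $n^{th}$ root of unity $q$, then in particular $H_n(\zeta,m,t)$ is isomorphic to a generalized Taft algebra $T(n,N,0)$ with $N=n$, so Proposition~\ref{prop:gtaftasqls} forces $m=1$ and $N = n/t = n$, hence $t=1$. Conversely, if $m=t=1$, the same proposition yields $H_n(\zeta,1,1) \cong T(n,n,0) = T_n(q)$ where $q = \zeta^t = \zeta$, so $H_n(\zeta,1,1) \cong T_n(\zeta)$.

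For the final statement, assume $n$ is prime and let $m,t$ be positive divisors of $n$ with $n \nmid mt$. The only divisors of $n$ are $1$ and $n$, so $m,t \in \{1,n\}$. Of the four possibilities for $(m,t)$, the three with at least one coordinate equal to $n$ give $mt \in \{n, n^2\}$, each of which is divisible by $n$, contradicting the defining hypothesis $n \nmid mt$ of Definition-Proposition~\ref{def:hnzmt}. The only surviving case is $m=t=1$, and by the first part this means $H_n(\zeta,m,t) \cong T_n(\zeta)$.

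There is no genuine obstacle here: both claims are immediate once one notes that $T_n(q) = T(n,n,0)$ and invokes Proposition~\ref{prop:gtaftasqls}, together with the elementary observation about divisors of a prime. The entire proof should fit in a short paragraph.
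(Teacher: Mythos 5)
Your proof is correct and follows exactly the route the paper intends: the corollary is stated there as an immediate consequence of Proposition~\ref{prop:gtaftasqls} (with no written proof), obtained by identifying $T_n(q)$ with $T(n,n,0)$ and, for the prime case, noting that the hypothesis $n \nmid mt$ forces $m=t=1$. Nothing further is needed.
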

   
A consequence of Lemma~\ref{lem:primitives} and Corollary~\ref{cor:ifprims} is the following:

\begin{corollary} \label{cor:hnzmtinnerfaithful}
   A left $H_n(\zeta, m, t)$-module $M$ is inner-faithful if and only if $G(H_n(\zeta, m, t)) = \langle y \rangle$ acts faithfully on $M$ and $x \cdot M \neq 0$.
\end{corollary}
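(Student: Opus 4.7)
The plan is to apply Corollary~\ref{cor:ifprims}, whose proof goes through verbatim with an arbitrary $H$-module $M$ in place of an $H$-module algebra, because the principal two-sided ideal generated by any $(g,1)$-skew primitive in a Hopf algebra is automatically a Hopf ideal. So inner-faithfulness of $M$ is equivalent to the requirement that $z \cdot M \neq 0$ for every $g \in G(H_n(\zeta,m,t)) = \langle y \rangle$ and every nonzero $z \in P_{g,1}(H_n(\zeta,m,t))$. By Lemma~\ref{lem:primitives} this is an explicit finite check, since each $P_{y^b,1}$ for $0 \le b < n$ has dimension at most two.

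The forward direction is immediate: specializing to $z = y^b - 1$ for $1 \le b < n$ shows that $\langle y \rangle$ acts faithfully on $M$, and specializing to $z = x \in P_{y^m,1}$ gives $x \cdot M \neq 0$.

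For the converse, assume $\langle y \rangle$ acts faithfully on $M$ and $x \cdot M \neq 0$. Any nonzero $z = \beta(y^b - 1)$ with $1 \le b < n$ acts nontrivially by the faithfulness hypothesis. The only remaining case is $z = \alpha x + \beta(y^m - 1) \in P_{y^m,1}$ with $\alpha \neq 0$. Decompose $M = \bigoplus_{i=0}^{n-1} M_i$ into eigenspaces for the (diagonalizable, since $y^n = 1$) action of $y$, where $y$ acts on $M_i$ by $\zeta^i$. The relation $yx = \zeta^t xy$ yields $x(M_i) \subseteq M_{i+t}$ (with indices taken mod $n$), while $y^m - 1$ preserves $M_i$ and acts there by the scalar $\zeta^{im} - 1$. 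Because $t \mid n$ together with $n \nmid mt$ force $1 \le t < n$, the summands $M_i$ and $M_{i+t}$ are distinct. Hence if $z \cdot v = 0$ for every $v \in M_i$, the two components $\alpha(x \cdot v) \in M_{i+t}$ and $\beta(\zeta^{im} - 1)v \in M_i$ must vanish separately; with $\alpha \neq 0$ this forces $x \cdot M = 0$, contradicting the hypothesis.

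The only step that requires genuine work is handling the two-dimensional space $P_{y^m,1}$, where a priori a nonzero combination $\alpha x + \beta(y^m - 1)$ could conceivably act as zero even though neither $x$ nor $y^m - 1$ does individually; the separation of $z \cdot v$ into disjoint $y$-eigenspace components is what rules this out.
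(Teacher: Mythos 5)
Your proof is correct and follows essentially the same route as the paper's: reduce via Corollary~\ref{cor:ifprims} and Lemma~\ref{lem:primitives} to checking the skew-primitive spaces, then use the $y$-eigenspace decomposition together with the observation that $x$ shifts $M_i$ to $M_{i+t}$ while $y^m-1$ preserves $M_i$, so a combination $\alpha x + \beta(y^m-1)$ with $\alpha \neq 0$ cannot annihilate $M$ unless $x$ does. Your explicit remark that Corollary~\ref{cor:ifprims} applies to arbitrary modules (not just module algebras) is a careful touch that the paper leaves implicit.
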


   \begin{proof}
      The forward direction is clear.
      Assume, then, that $\langle y \rangle$ acts faithfully and that $x \cdot M \neq 0$.
      Then every nonzero multiple of $y^b - 1$ does not act by zero for every $b$.
      Thus, we only need to check that each nonzero element of $\kk x + \kk (y^m - 1)$ acts by nonzero by Corollary~\ref{cor:ifprims} and Lemma~\ref{lem:primitives}.
      Since $x$ and $y^m - 1$ do not act by zero, this is equivalent to showing that $x$ does not act as any nonzero scalar multiple of $y^m - 1$.
      Let $M_i = \{a \in M : y \cdot a = \zeta^i a \}$ denote the eigenspaces of the action of $y$ on $M$. 
      Note that if $u \in M_i$, then $x \cdot u \in M_{i+t}$, since $y \cdot (x \cdot u) = \zeta^t x \cdot (y \cdot u) = \zeta^{i+t} x \cdot u$.
      If $x \cdot M_i = 0$ for all $i$, then $x \cdot M = 0$, contradicting our hypothesis.
      Thus, choose $i$ and $u \in M_i$ such that $x \cdot u \neq 0$.
      Then $(y^m - 1) \cdot u = (\zeta^{mi} - 1) u \in M_i$,~but $x \cdot u \in M_{i+t}$.
      Since $n \nmid mt$, $M_i \neq M_{i+t}$.
      Thus, $x \cdot u$ is not equal to any nonzero scalar multiple of $(y^m - 1) \cdot u$.
   \end{proof}
   
Our next goal is to answer Question~\ref{questions} for $H_n(\zeta,m,t)$.
That is, we are interested in the existence of structures $A(H_n(\zeta, m, t))$ as in Notation~\ref{not:ah}, and whether or not such structures can be extended to admit actions of $D(H_n(\zeta, m, t))$.
Before considering this, we compute $D(H_n(\zeta, m, t))$ explicitly. 
This is made easier by first giving a nice presentation of the dual.

\subsection{Computing the dual and double}

\subsubsection{The dual $H_n(\zeta, m, t)^*$} \label{subsubsect:hnzmtdual}

In \cite{beattie}, Beattie computed the duals of quantum linear spaces.
As an application of \cite[Corollary~2.3]{beattie}, we get the following result:

\begin{lemma} \cite{beattie}
   As Hopf algebras, $H_n(\zeta, m, t)^* \cong H_n(\zeta, t, m)$. \qed
\end{lemma}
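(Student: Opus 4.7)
The statement asserts a Hopf algebra isomorphism $H_n(\zeta,m,t)^* \cong H_n(\zeta,t,m)$, and in the framework of Section~\ref{subsect:duality} this reduces to exhibiting a perfect duality between $H_n(\zeta,t,m)$ and $H_n(\zeta,m,t)$: since both Hopf algebras have the same finite dimension $Nn$ (note $\ord(\zeta^{mt}) = \ord(\zeta^{tm})$), a nondegenerate bilinear form satisfying the axioms \eqref{eq:duality} will automatically induce the desired isomorphism. My plan is to construct such a pairing explicitly. Writing $Y,X$ for the generators of $H_n(\zeta,t,m)$ (so $YX = \zeta^m XY$ and $\Delta(X) = Y^t \otimes X + X \otimes 1$), I specify the bilinear form $\langle -,- \rangle \colon H_n(\zeta,t,m) \times H_n(\zeta,m,t) \to \kk$ on algebra generators by
\[
  \langle Y, y \rangle = \zeta, \quad \langle Y, x \rangle = 0, \quad \langle X, y \rangle = 0, \quad \langle X, x \rangle = 1,
\]
and extend bimultiplicatively via $\langle uv, h \rangle = \langle u, \com h 1\rangle \langle v, \com h 2\rangle$ and $\langle u, hk\rangle = \langle \com u 1, h\rangle \langle \com u 2, k\rangle$. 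A routine induction on the PBW monomials $y^i x^j$ and $Y^a X^b$, using the coproducts of $x$ and $X$ together with the quantum binomial formula \eqref{eq:skewbinom}, will produce a closed form of the shape $\langle Y^a X^b, y^i x^j \rangle = \delta_{b,j} \, (j)_{\zeta^{mt}}! \, \zeta^{ai + c(a,b,i,j)}$, with the exponent $c$ pinned down by the induction.

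The main obstacle is verifying well-definedness, i.e.\ that both sides of the pairing factor through their defining ideals. On the $H_n(\zeta,t,m)$ side, $\langle Y^n - 1, - \rangle = 0$ follows from $\zeta^n = 1$; the relation $\langle X^N, - \rangle = 0$ appears in the closed form through the factor $(N)_{\zeta^{mt}}! = 0$, using that $\zeta^{mt}$ has order exactly $N$; and the skew-commutator $\langle YX - \zeta^m XY, - \rangle = 0$ reduces to comparing how $Y$ pairs with the grouplike portions $y^{m(j-k)}$ appearing in $\Delta(x^j)$ on either side of the product, so that the $\zeta^m$ from swapping $Y$ past $X$ on the left matches precisely the scaling that arises on the right. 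The analogous checks on the $H_n(\zeta,m,t)$ side are symmetric in the roles of $m$ and $t$. Counit compatibility is immediate from the values on generators, and antipode compatibility follows from uniqueness of an antipode compatible with a given bialgebra pairing.

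Nondegeneracy is then transparent from the closed form: restricted to the bidegree $b=j$, the pairing is a nonzero scalar multiple of the character pairing $\langle Y^a, y^i \rangle = \zeta^{ai}$ on $\kk\langle y\rangle$, which is nondegenerate. The induced map $H_n(\zeta,t,m) \to H_n(\zeta,m,t)^*$ is therefore injective, and by equality of dimensions it is the claimed Hopf algebra isomorphism. A cleaner but less direct route is simply to apply Beattie's \cite[Corollary~2.3]{beattie}: identifying $H_n(\zeta,m,t) \cong \mc R(y^m, \chi) \# \kk\langle y\rangle$ with $\chi(y) = \zeta^t$, her formula for the dual of a bosonized quantum linear space exchanges $y^m \in \langle y\rangle$ with $\chi \in \widehat{\langle y\rangle}$, which in our parametrization is exactly the swap $m \leftrightarrow t$.
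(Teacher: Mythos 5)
Your proposal is correct and matches the paper's treatment: the paper likewise disposes of the lemma by citing Beattie's \cite[Corollary~2.3]{beattie} and then, in Proposition~\ref{prop:hnzmtpairing}, constructs exactly the explicit perfect duality you describe, with the same values on generators and a closed form $\langle X^i Y^j, x^k y^\ell\rangle = \delta_{i,k}\,(i)_q!\,\zeta^{j\ell}$ (with $q = \zeta^{mt}$) that agrees with yours up to the reordering of PBW monomials you flag, and the same Fourier-inversion argument for nondegeneracy. The only cosmetic difference is that the paper verifies the antipode condition by direct computation rather than invoking its automaticity for bialgebra pairings, which is a legitimate shortcut on your part.
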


Since we have a presentation of the dual, for computing the double, we would like to know the dual pairing between $H_n(\zeta,m,t)$ and ${H_n(\zeta,t,m)}$.
Thus, we exhibit a perfect duality between these two Hopf algebras.

\begin{proposition} \label{prop:hnzmtpairing}
  With $y,x$ denoting the generators of $H_n(\zeta,m,t)$,  and $Y,X$ the generators $H_n(\zeta,t,m)$, the bilinear form defined by
  \begin{equation} \label{eq:hnzmtpairing}
     \langle X^i Y^j, x^k y^\ell \rangle = \delta_{i,k} \ (i)_q ! \ \zeta^{j \ell},
  \end{equation}
  is a perfect duality.
\end{proposition}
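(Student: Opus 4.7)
The plan is to verify the five axioms of a duality from \eqref{eq:duality} on the generators and extend by multiplicativity, then verify non-degeneracy on the canonical bases. First we pin down the symbol $q$ in \eqref{eq:hnzmtpairing}: since in $H_n(\zeta,m,t)$ one has $(x \tens 1)(y^m \tens x) = \zeta^{-mt}(y^m \tens x)(x \tens 1)$ in $H \tens H$, the $q$-binomial theorem \eqref{eq:skewbinom} applied to $\Delta(x) = y^m \tens x + x \tens 1$ yields
$$\Delta(x^k y^\ell) = \sum_{a=0}^k {\binom k a}_q x^{k-a} y^{ma+\ell} \tens x^a y^\ell, \qquad q \defeq \zeta^{mt},$$
and symmetrically $\Delta(X^i Y^j) = \sum_b {\binom i b}_q X^{i-b} Y^{tb+j} \tens X^b Y^j$ (with the same $q$ since $\zeta^{mt}=\zeta^{tm}$). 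Note that the formula is well-defined on both sides: $\zeta^{nj\ell}=1$ handles $y^n=1$ and $Y^n=1$, while $(k)_q!=0$ for $k\ge N$ (since $q$ has order exactly $N = \mathrm{ord}(\zeta^{mt})$) handles $x^N=0$ and $X^N=0$; the $\delta_{i,k}$ and the choice of monomial basis make the commutation relations compatible.

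\textbf{Verifying the duality axioms.} The counit conditions $\langle 1, x^k y^\ell\rangle = \delta_{0,k} = \epsilon(x^k y^\ell)$ and $\langle X^i Y^j, 1\rangle = \delta_{i,0} = \epsilon(X^i Y^j)$ are immediate. For multiplicativity, one computes $X^i Y^j \cdot X^{i'} Y^{j'} = \zeta^{jmi'} X^{i+i'} Y^{j+j'}$ and $x^k y^\ell \cdot x^{k'} y^{\ell'} = \zeta^{tk'\ell} x^{k+k'} y^{\ell+\ell'}$. Plugging the comultiplication formula for $\Delta(x^k y^\ell)$ into
$$\langle X^i Y^j \tens X^{i'} Y^{j'}, \Delta(x^k y^\ell) \rangle = \sum_a {\binom k a}_q \delta_{i,k-a}(i)_q! \zeta^{j(ma+\ell)} \delta_{i',a}(i')_q!\zeta^{j'\ell}$$
collapses via the identity ${\binom{i+i'}{i'}}_q (i)_q! (i')_q! = (i+i')_q!$ to $\delta_{i+i',k}(i+i')_q! \zeta^{jmi' + (j+j')\ell}$, which equals $\langle X^i Y^j \cdot X^{i'} Y^{j'}, x^k y^\ell\rangle$. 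The analogous computation on the other side uses $\Delta(X^i Y^j)$ and the commutation factor $\zeta^{tk'\ell}$ and again reduces to the same $q$-Vandermonde identity. The antipode compatibility then follows from the other four axioms by a standard argument (uniqueness of convolution inverses of the identity on both sides).

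\textbf{Perfectness.} Both algebras have dimension $nN$, with bases $\{X^i Y^j : 0 \le i < N,\ 0 \le j < n\}$ and $\{x^k y^\ell : 0 \le k < N,\ 0 \le \ell < n\}$. Ordering these bases lexicographically with the $x$-degree (resp.\ $X$-degree) as outer index, the Gram matrix of the pairing is block-diagonal with $N$ blocks, the $i$-th block being the scalar multiple $(i)_q!$ of the $n \times n$ Fourier-type matrix $(\zeta^{j\ell})_{0\le j,\ell<n}$. Since $\zeta$ is a primitive $n^{\text{th}}$ root of unity, the latter matrix is invertible (it is essentially a discrete Fourier transform matrix); and $(i)_q! \ne 0$ for all $0 \le i < N$ because $q = \zeta^{mt}$ has order exactly $N$. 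Hence the Gram matrix is invertible, so the induced maps $\phi$ and $\psi$ are injective, completing the proof.

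\textbf{Expected obstacle.} The only genuinely computational step is the multiplicativity verification; the bookkeeping of the commutation scalars $\zeta^{jmi'}$ and $\zeta^{tk'\ell}$ against the $q$-binomial coefficients from $\Delta$ is where a sign can easily be lost, but everything reduces cleanly to the $q$-Vandermonde identity once the comultiplication formulas are established.
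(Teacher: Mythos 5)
Your proof is correct, and the core of it — verifying the bialgebra-pairing axioms via the $q$-binomial expansion of $\Delta(x^ky^\ell)$ and $\Delta(X^iY^j)$ with $q=\zeta^{mt}$, then reducing multiplicativity to the identity ${\binom{i+i'}{i'}}_q(i)_q!(i')_q!=(i+i')_q!$ — is the same computation the paper carries out (the paper writes out the $\langle u, xy\rangle$ side explicitly and declares the $\langle uv,x\rangle$ side similar; you do the reverse). You deviate from the paper in two places, both to your advantage. First, the paper verifies the antipode axiom by a direct computation using the explicit formula $S(x^iy^j)=(-1)^iq^{i-1}\zeta^{-ti(im+j)}x^iy^{-im-j}$, which is its messiest step; you instead invoke the standard fact that for a pairing satisfying the four bialgebra axioms, $(u\tens x)\mapsto\langle S_H(u),x\rangle$ and $(u\tens x)\mapsto\langle u,S_K(x)\rangle$ are respectively left and right convolution inverses of the pairing itself, hence equal. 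That argument is valid and cleaner. Second, for perfectness the paper shows injectivity of $\phi$ by an explicit root-of-unity inversion and then appeals to a dimension count for $\psi$; your observation that the Gram matrix is block-diagonal with blocks $(i)_q!\cdot(\zeta^{j\ell})_{j,\ell}$, each invertible since $\ord(q)=N$ and $\zeta$ is a primitive $n^{th}$ root of unity, is the same computation packaged so that both injectivity statements come out at once. Your identification of $q=\zeta^{mt}$, which the paper leaves implicit, is also consistent with the paper's comultiplication formula.
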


In particular, we get that the dual pairing is given on generators by
  \[
     \langle Y, y \rangle = \zeta, \quad%
     \langle Y, x \rangle = 0, \quad%
     \langle X, y \rangle = 0, \quad%
     \langle X, x \rangle = 1.
  \]
Note the following equalities, which will be useful for our calculations:
\begin{gather}
   \Delta(X^i Y^j) = \sum_{s=0}^i {\binom i s}_q X^{i-s} Y^{ts + j} \tens X^s Y^j \quad \text{and} \label{eq:hnzmtcomult} \\
   S(x^i y^j) ~=~ S(y^j)S(x^i) ~=~ (-1)^i y^{-j} q^{i-1} y^{-im} x^i ~=~ (-1)^i q^{i-1} \zeta^{-ti(im+j)} x^i y^{-im - j} . \label{eq:hnzmtantipode}
\end{gather}

  \begin{proof}[Proof of Proposition~\ref{prop:hnzmtpairing}]
    We show that \eqref{eq:hnzmtpairing} is a duality, i.e. that \eqref{eq:duality} holds.
    First, we check that 
    \begin{equation} \label{eq:hnzmtduala}
      \langle X^a Y^b, x^i y^j x^k y^\ell \rangle  =  \langle \com{(X^a Y^b)} 1, x^i y^j \rangle \langle \com{(X^a Y^b)} 2, x^k y^\ell \rangle.
    \end{equation}
    On the one hand,
    \begin{align*}
       \langle X^a Y^b, x^i y^j x^k y^\ell \rangle 
          &= \zeta^{tjk} \langle X^a Y^b, x^{i+k} y^{j+\ell} \rangle 
          \overset{\eqref{eq:hnzmtpairing}} = \delta_{a, i+k} \ (a)_q ! \ \zeta^{tjk + b(j + \ell)}.
    \end{align*}
    On the other hand, we have 
    \begin{align*}
       \langle \com{(X^a Y^b)} 1, x^i y^j \rangle \langle \com{(X^a Y^b)} 2, x^k y^\ell \rangle
          & \overset{\eqref{eq:hnzmtcomult}} = \sum_{s = 0}^a {\binom a s}_q \langle X^{a-s} Y^{ts + b}, x^i y^j \rangle \langle X^s Y^b, x^k y^\ell \rangle \\
          & \overset{\eqref{eq:hnzmtpairing}} = \sum_{s = 0}^a {\binom a s}_q \delta_{a-s, i} \ (a-s)_q ! \ \zeta^{(ts + b) j} \ \delta_{s,k} \ (s)_q ! \ \zeta^{b \ell} \\
          & \overset{\ \ \ \ \ \ } = \delta_{a, i+k} {\binom a k}_q \ (i)_q ! \ (k)_q ! \ \zeta^{tjk + bj + b \ell}.
    \end{align*}
    Now \eqref{eq:hnzmtduala} follows since when $a = i+k$, we have ${\binom a k}_q = \frac {(a)_q!}{(i)_q!(k)_q!}$.
    
    The proof that $ \langle X^a Y^b X^c Y^d, x^i y^j \rangle = \langle X^a Y^b, \com{(x^i y^j)} 1 \rangle \langle X^c Y^d, \com{(x^i y^j)} 2 \rangle $
    follows similarly.
    We also have by \eqref{eq:hnzmtpairing} that
    $
       \langle X^a Y^b, 1 \rangle = \delta_{a,0} = \epsilon(X^a Y^b) 
       \text{ and }
       \langle 1, x^i y^j \rangle = \delta_{0,i} = \epsilon(x^i y^j).
    $
    
    Finally, we have that
    \begin{align*}
       \langle X^a Y^b, S(x^i y^j) \rangle
          & \overset{\eqref{eq:hnzmtantipode}} = (-1)^i q^{i-1} \zeta^{-ti(im+j)} \langle X^a Y^b, x^i y^{- im - j} \rangle
          \overset{\eqref{eq:hnzmtpairing}} = \delta_{a, i} \ (a)_q! \ (-1)^i q^{i-1} \zeta^{-ti(im+j)} \ \zeta^{-b (im + j)} \\
          & \overset{\ \ \ \ \ \ } = \delta_{a, i} \ (a)_q! \ (-1)^a q^{a-1} \zeta^{-ma(at + b)} \ \zeta^{-j (at + b)} 
          \overset{\eqref{eq:hnzmtpairing}} = (-1)^a q^{a-1} \zeta^{-ma(at + b)} \langle X^a Y^{-at - b}, x^i y^j \rangle \\
          & \overset{\eqref{eq:hnzmtantipode}} = \langle S(X^a Y^b), x^i y^j \rangle.
    \end{align*}
    Therefore, we have a duality. 
    To show that this duality is perfect, we need to show that the maps \mbox{$\phi: H_n(\zeta, t, m) \to H_n(\zeta, m, t)^*$} and $\psi: H_n(\zeta, m, t) \to H_n(\zeta, t, m)^*$ defined by
    $
        \phi(u)(x) = \langle u, x \rangle = \psi(x)(u)
    $
    are injective.
    By a dimension count, verifying just one of these claims suffices.
    Let $f = \sum_{a = 0}^{N-1} \sum_{b = 0}^{n-1} \alpha_{a,b} X^a Y^b$ with $\alpha_{a,b} \in \kk$ and suppose $\phi(f) = 0$.
    Then for any $i,j$,
    \begin{align*}
       0 &= \phi(f)(x^i y^j) 
          = \langle f, x^i y^j \rangle
          = \sum_{a = 0}^{N-1} \sum_{b=0}^{n-1} \alpha_{a,b} \langle X^a Y^b, x^i y^j \rangle 
          = \sum_{a = 0}^{N-1} \sum_{b=0}^{n-1} \alpha_{a,b} \; \delta_{a,i}\; (a)_q! \; \zeta^{bj}
          = \sum_{b=0}^{n-1} \alpha_{i,b} \; (i)_q! \; \zeta^{bj} .
    \end{align*}
    Let $\beta_{i,j}$ denote $\sum_{b = 0}^{n-1} \alpha_{i,b} \; \zeta^{bj}$.
    By the above, for every $i,j$, $\beta_{i,j} = 0$.
    Thus, for any fixed $i$ and $k$,
    \[
       0 = \sum_{j=0}^{n-1} \zeta^{-jk} \beta_{i,j} 
       = \sum_{j=0}^{n-1} \zeta^{-jk} \sum_{b=0}^{n-1} \alpha_{i,b} \; \zeta^{bj} 
       = \sum_{b=0}^{n-1} \left( \sum_{j=0}^{n-1} \zeta^{(b-k)j} \right) \alpha_{i,b} 
       = n \ \alpha_{i,k} .
    \]
    The last equality follows because for $\xi$ a non-identity $n^{th}$ root of unity, $\sum_{j=0}^{n-1} \xi^j = 0$, and $\zeta^{b-k} \neq 1$ for all $b \neq k$.
    Thus, since each $\alpha_{i,j} = 0$, we have $f = 0$, so $\phi$ is injective.
    Hence, we have proven that the duality is perfect.    
  \end{proof}

\subsubsection{The Drinfel'd double $D(H_n(\zeta, m, t))$} \label{subsubsect:hnzmtdouble}

Now we begin with the computation of $D(H_n(\zeta, m, t))$.
By Lemma~\ref{lem:gens}, as an algebra, $D(H_n(\zeta, m, t))$ is generated by the generators of $H_n(\zeta, m, t)$ and of its dual, and has the relations of both.
We only need to find how these generators ``commute'' with each other, i.e. how to in general write an element as a linear combination of monomials with $X$ and $Y$ to the left of $x$ and $y$.
To find these relations, we use \eqref{eq:doublemixed}.

\begin{proposition}
  The Drinfel'd double $D(H_n(\zeta, m, t))$ of $H_n(\zeta,m,t)$ is generated by grouplike elements $y$ and $Y$, a $(y^m, 1)$-skew primitive element $x$, and a $(1, Y^t)$-skew primitive element $X$, subject to the relations 
  \begin{gather*}
     y^n = Y^n = 1, \quad 
     x^N = X^N = 0, \quad
     yx = \zeta^t xy, \quad
     YX = \zeta^m XY, \\
     yY = Yy, \quad
     xY = \zeta^m Yx, \quad
     yX = \zeta^{-t} Xy, \quad
     xX - Xx = Y^t - y^m,
  \end{gather*}
  where $ N = \ord(\zeta^{mt}) = \frac n {\text{gcd}(n, mt)}$. 
\end{proposition}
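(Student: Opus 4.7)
The plan is to invoke Lemma~\ref{lem:gens} to reduce the presentation problem to two tasks: (i) record the relations within each factor $H=H_n(\zeta,m,t)$ and $H^{*cop}$, and (ii) compute the four ``cross'' relations showing how the generators $y,x$ of $H$ move past the generators $Y,X$ of $H^{*cop}$. The internal relations of $H$ are immediate from Definition-Proposition~\ref{def:hnzmt}. For $H^{*cop}$, we use the isomorphism $H^*\cong H_n(\zeta,t,m)$ and take the \emph{opposite} comultiplication; this yields $\Delta_{D(H)}(Y)=Y\tens Y$ and $\Delta_{D(H)}(X)=1\tens X + X\tens Y^t$, so in $D(H)$ the element $X$ is $(1,Y^t)$-skew primitive as claimed, and the relations $Y^n=1$, $X^N=0$, $YX=\zeta^m XY$ persist (the algebra structure of $H^{*cop}$ is the same as that of $H^*$).

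For the cross relations I will apply the master formula
\[
   ap ~=~ \langle \com p 1, S^{-1}(\com a 3) \rangle \langle \com p 3, \com a 1\rangle\, \com p 2\, \com a 2
\]
from \eqref{eq:doublemixed}, together with the pairing \eqref{eq:hnzmtpairing} and the antipode formula \eqref{eq:hnzmtantipode} (so that $S^{-1}(x)=-xy^{-m}$ up to the correct scalar, and $S^{-1}(y)=y^{-1}$). Here the three-fold comultiplications needed are $\Delta^{(2)}(y)=y\tens y\tens y$ and $\Delta^{(2)}(x)=x\tens 1\tens 1+y^m\tens x\tens 1+y^m\tens y^m\tens x$ on the $H$ side, while on the $H^{*cop}$ side one uses $\Delta^{(2)}(Y)=Y\tens Y\tens Y$ and the $\Delta^{cop}$-iterated formula for $X$.

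The cases $yY=Yy$, $xY=\zeta^m Yx$, and $yX=\zeta^{-t}Xy$ are each short: the pairing kills all but one term by the $\delta_{i,k}$ factor in \eqref{eq:hnzmtpairing}, leaving a single scalar equal to the appropriate power of $\zeta$. The main obstacle is the last relation, $xX-Xx=Y^t-y^m$. This requires summing nine tensor-components from $\Delta^{(2)}(x)\tens\Delta^{(2)}(X)$; most vanish because $\langle Y^t,x\rangle=0=\langle X,y^j\rangle$, and the three surviving terms are (a) the $(x,1,1)$-piece of $x$ paired with the $(Y^t,Y^t,X)$-piece of $X$, giving $Y^t$; (b) the $(y^m,x,1)$-piece paired with $(Y^t,X,1)$, giving $Xx$; and (c) the $(y^m,y^m,x)$-piece paired with $(X,1,1)$, which after evaluating $\langle X,S^{-1}(x)\rangle=\langle X,-xy^{-m}\rangle=-1$ gives $-y^m$. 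Summing yields the desired identity.

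Finally, to conclude that these relations give a \emph{presentation} (not just valid identities) of $D(H)$, I note that the relations allow every monomial in $\{y,x,Y,X\}$ to be rewritten with all $Y$'s and $X$'s to the left of all $y$'s and $x$'s. Hence the abstract algebra $\widetilde D$ defined by the listed generators and relations admits a surjection onto $D(H)$, and is spanned by monomials $X^aY^b x^cy^d$ with $0\le a,c<N$, $0\le b,d<n$. Since $\dim_\kk\widetilde D\le (Nn)^2=\dim_\kk D(H)$, the surjection is an isomorphism.
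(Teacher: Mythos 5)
Your proposal is correct and follows essentially the same route as the paper's proof: the internal relations come from Lemma~\ref{lem:gens} together with the identification $H_n(\zeta,m,t)^* \cong H_n(\zeta,t,m)$, and the four cross relations are obtained from \eqref{eq:doublemixed} using the explicit pairing \eqref{eq:hnzmtpairing}, with your identification of the three surviving terms in the $xX$ computation matching the paper's term-by-term evaluation exactly. Your closing dimension-count argument that the listed relations actually give a presentation is a small completeness addition the paper leaves implicit (it follows from $D(H) = H^{*cop}\otimes H$ as a vector space), but it does not change the approach.
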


   \begin{proof}
      The generators and first row of relations follow from Lemma~\ref{lem:gens}.
      The remaining relations come from moving generators of one across generators of the other, which is done as follows.
      First, note that 
      \begin{gather*}
         \Delta^2(x) = y^m \tens y^m \tens x + y^m \tens x \tens 1 + x \tens 1 \tens 1, \quad
         \Delta^2(y) = y \tens y \tens y, \\
         \Delta^2(X) = Y^t \tens Y^t \tens X + Y^t \tens X \tens \epsilon + X \tens \epsilon \tens \epsilon,
         \quad \Delta^2(Y) = Y \tens Y \tens Y,
      \end{gather*}
      and that $S^{-1}(x) = -xy^{-m}$.
      Thus, using \eqref{eq:doublemixed} and \eqref{eq:hnzmtpairing}, we have the following computations
      \[
         y Y = \langle Y, y^{-1} \rangle \langle Y, y \rangle Y y 
            = \zeta^{-1} \zeta Y y 
            = Y y ,
      \]
      \[
         x Y ~=~ \langle Y, -xy^{-m} \rangle \langle Y, y^m \rangle Y y^m 
               + \langle Y, 1 \rangle \langle Y, y^m \rangle Y x,
               + \langle Y, 1 \rangle \langle Y, x \rangle Y 1
            ~=~ \zeta^m Y x,
      \]
      \[
         y X ~=~ \langle Y^t, y^{-1} \rangle \langle X, y \rangle Y^t y
               + \langle Y^t, y^{-1} \rangle \langle \epsilon, y \rangle X y
               + \langle X, y^{-1} \rangle \langle \epsilon, y \rangle \epsilon y
            ~=~ \zeta^{-t} X y, \quad \text{and}
      \]
      \begin{alignat*}{3}
         x X ~=~& \langle Y^t, -xy^{-m} \rangle \langle X, y^m \rangle Y^t y^m
               &&+ \langle Y^t, -xy^{-m} \rangle \langle \epsilon, y^m \rangle X y^m
               &&+ \langle X, -xy^{-m} \rangle \langle \epsilon, y^m \rangle \epsilon y^m  \\
             &\ \ + \langle Y^t, 1 \rangle \langle X, y^m \rangle Y^t x
               &&+ \langle Y^t, 1 \rangle \langle \epsilon, y^m \rangle X x
               &&+ \langle X, 1 \rangle \langle \epsilon, y^m \rangle \epsilon x  \\
             &\ \ + \langle Y^t, 1 \rangle \langle X, x \rangle Y^t 1
               &&+ \langle Y^t, 1 \rangle \langle \epsilon, x \rangle X 1
               &&+ \langle X, 1 \rangle \langle \epsilon, x \rangle \epsilon 1  \\
            ~=~& -y^m + X x + Y^t.
      \end{alignat*}
      
      \vspace{-.28in}
      
   \end{proof}

\subsection{The possible structures of $A(H_n(\zeta, m, t))$}

We will see that $H_n(\zeta, m, t)$-module algebra structures on $A(H_n(\zeta, m, t))$ as in Notation~\ref{not:ah} do not always exist, depending on the value of $m$ and $t$.
For considering actions of $H_n(\zeta, m, t)$ on $A(H_n(\zeta, m, t))$, we will use an infinite-dimensional Hopf algebra for which $H_n(\zeta, m, t)$ is a factor.
For an integer $n>0$, a primitive $n^{th}$ root of unity $\zeta \in \kk$, and $m, t \in \Z$ both dividing $n$, we define
\[
  \widetilde H_n(\zeta, m, t) = \kk \langle y,  x \mid y^n = 1, \ y  x = \zeta^t  x y \rangle,
\]
with $y$ grouplike, and $ x$ a $(y^m,1)$-skew primitive element.
It is clear that $H_n(\zeta, m, t)$ is the factor of $\widetilde H_n(\zeta, m, t)$ by the Hopf ideal generated by $ x^N$.
The following technical lemma will help us determine when structures as in Notation~\ref{not:ah} do exist.
We will see that the obstruction comes from the condition that $x^N$ acts by zero.
We again use the notation from Remark~\ref{rem:astructure} for eigenspaces of the action of $y$: $A_i = \{a \in A \mid y \cdot a = \zeta^i a\}$, noting that by a dimension count $A_i = \kk u^i$ for all $i$.

\begin{lemma} \label{lem:htildema}
   Let $A = \kk[u] / (u^n -1)$ and suppose $A$ is an $\widetilde H_n(\zeta, m, t)$-module algebra with $y \cdot u = \zeta u$ and $ x \cdot u \neq 0$.
   Then there exists nonzero $\gamma \in \kk$ such that for any $p, q > 0$,
   \begin{gather*}
       x \cdot u^p = \gamma \ (p)_{\zeta^m}\  u^{p+t} \quad \text{and} \quad
       x^q \cdot u^p = \gamma^q \left(\prod_{i=0}^{q-1} (p + it)_{\zeta^m} \right) u^{p + qt}.
   \end{gather*}
   In particular, $ x^N \cdot u^p = 0$ if and only if $n/m$ divides $p + it$ for some $0 \leq i < N$.
\end{lemma}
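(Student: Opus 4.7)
The plan is to pin down $x \cdot u$ by an eigenspace argument, then establish the two displayed formulas via successive inductions on $p$ and on $q$, and finally to translate the vanishing condition using the order of $\zeta^m$ in $\kk^\times$. Since $yx = \zeta^t xy$ in $\widetilde H_n(\zeta, m, t)$, for any $a \in A_p$ one has
\[
   y \cdot (x \cdot a) = \zeta^t\, x \cdot (y \cdot a) = \zeta^{p+t}(x \cdot a),
\]
so $x$ raises the $y$-weight by $t$. In particular $x \cdot u \in A_{1+t} = \kk u^{1+t}$, so there is a unique scalar $\gamma \in \kk$ with $x \cdot u = \gamma u^{1+t}$; the hypothesis $x \cdot u \neq 0$ forces $\gamma \neq 0$, and this is the constant appearing in the statement.

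Next, I would prove $x \cdot u^p = \gamma\,(p)_{\zeta^m}\,u^{p+t}$ by induction on $p \geq 1$. Because $x$ is $(y^m,1)$-skew primitive, the module-algebra axiom becomes $x \cdot (ab) = (y^m \cdot a)(x \cdot b) + (x \cdot a)\,b$. Applied with $a = u$ and $b = u^p$, together with $y^m \cdot u = \zeta^m u$, the inductive step yields
\[
   x \cdot u^{p+1} \;=\; \zeta^m u \cdot \gamma\,(p)_{\zeta^m}\,u^{p+t} + \gamma\,u^{1+t} u^p \;=\; \gamma\bigl(1 + \zeta^m(p)_{\zeta^m}\bigr) u^{p+1+t},
\]
and the telescoping identity $1 + \zeta^m(p)_{\zeta^m} = (p+1)_{\zeta^m}$ closes the induction. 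The formula for $x^q \cdot u^p$ then follows by a parallel induction on $q$: apply $x$ to the formula at stage $q$ and invoke the first identity at exponent $p+qt$.

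For the final equivalence, observe that $u^{p+Nt}$ is a basis element of $A$ and $\gamma \neq 0$, so $x^N \cdot u^p = 0$ if and only if at least one factor $(p+it)_{\zeta^m}$ with $0 \leq i < N$ vanishes. Since $m \mid n$ and $\zeta$ is a primitive $n^{th}$ root of unity, $\zeta^m$ has order exactly $n/m$ in $\kk^\times$, so $(k)_{\zeta^m} = (\zeta^{mk} - 1)/(\zeta^m - 1)$ vanishes precisely when $(n/m) \mid k$. Applied to $k = p + it$, this produces the stated characterization. The whole argument is essentially careful bookkeeping; the only point requiring a moment's care is the $q$-integer telescoping in the inductive step, so I do not anticipate a serious obstacle.
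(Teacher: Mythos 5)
Your proposal is correct and follows essentially the same route as the paper's proof: the same eigenspace argument pinning down $x \cdot u = \gamma u^{1+t}$, the same two inductions using the skew-primitive Leibniz rule and the telescoping identity $1 + \zeta^m (p)_{\zeta^m} = (p+1)_{\zeta^m}$, and the same final observation that $(k)_{\zeta^m} = 0$ exactly when $\ord(\zeta^m) = n/m$ divides $k$. No gaps.
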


   \begin{proof}
      First, since $y  x \cdot u = \zeta^t  x y \cdot u = \zeta^{t+1}  x \cdot u$, we see that $ x \cdot u \in A_{t+1} = \kk u^{t+1}$.
      Thus, there exists nonzero $\gamma \in \kk$ such that $ x \cdot u = \gamma u^{1+t}$.
      We have established the first equality for the case $p=1$.
      Thus, we proceed by induction, assuming the result for $p-1$.
      We compute:
      
      \begin{tabular}{r l l}
         $ x \cdot u^p $
           &$= (y^m \cdot u)( x \cdot u^{p-1}) + ( x \cdot u)(1 \cdot u^{p-1})$
           &$=  (\zeta^m u)(\gamma \ (p-1)_{\zeta^m} \ u^{p-1 + t}) + (\gamma u^{t+1})(u^{p-1})$ \\
           &$= \gamma \ [\zeta^m \ (p-1)_{\zeta^m} \ + 1] \ u^{p+t} $
           &$= \gamma \ (p)_{\zeta^m}\  u^{p+t}$.
      \end{tabular}
      
      This establishes the first result for all $p$, as well as the second equality in the case $q=1$.
      We now prove the second equality for all $q$ and $p$, by induction on $q$.
      Assume the result for $q-1$.
      Then we compute:
      \begin{align*}
          x^q \cdot u^p =  x \cdot ( x^{q-1} \cdot u^p)
         &=  x \cdot \left( \gamma^{q-1} \ \left( \prod_{i=0}^{q-2} (p + it)_{\zeta^m} \right) \ u^{p + (q-1)t} \right) \\
         &= \gamma^{q-1} \ \left( \prod_{i=0}^{q-2} (p + it)_{\zeta^m} \right) \gamma \ (p + (q-1)t)_{\zeta^m} \ u^{p + (q-1)t + t} 
         = \gamma^q \left( \prod_{i=0}^{q-1} (p + it)_{\zeta^m} \right) u^{p + qt}.
      \end{align*}
      The final statement holds as $(n)_q = 0$ if and only if $\ord(q) \mid n$, and as $\ord(\zeta^m) = n/m$.
   \end{proof}

\begin{proposition} \label{prop:minexistence}
   There exist $H_n(\zeta, m, t)$-module algebra structures on $A(H_n(\zeta, m, t))$ as in Notation~\ref{not:ah} if and only if one of the following equivalent conditions holds:
   \begin{enum}
      \item
         ${\text{gcd}(t, n/m) = 1}$
      \item
         $\text{gcd}(mt, n) = m$
      \item
         $n/m = N \ (= \ord(\zeta^{mt}))$ 
   \end{enum}
   In particular, if $t = 1$, then there are $H_n(\zeta,m,t)$module algebra structures on $A(H_n(\zeta,m,t))$ as in Notation~\ref{not:ah}.
   On the other hand, if these structures exist, we must have that $t | m$, and in this case, the module structure is given by $y \cdot u = \zeta u$ and $x \cdot u = \gamma u^{t+1} $ for some nonzero $\gamma \in \kk$.
   
\end{proposition}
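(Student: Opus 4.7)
The plan is to reduce existence of $A(H_n(\zeta, m, t))$ to a tractable number-theoretic condition via the auxiliary infinite-dimensional Hopf algebra $\widetilde H_n(\zeta, m, t)$. First, I would dispose of the equivalence of (a), (b), (c): since $m \mid n$, the identity $\gcd(mt, n) = m \gcd(t, n/m)$ gives (a) $\Leftrightarrow$ (b), and the formula $N = n/\gcd(mt, n)$ makes (b) $\Leftrightarrow$ (c) immediate.

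For the main claim, I would fix the candidate algebra $A = \kk[u]/(u^n - 1)$ with $y \cdot u = \zeta u$, as per Remark~\ref{rem:astructure}. Any $H_n(\zeta, m, t)$-module algebra structure on $A$ lifts to an $\widetilde H_n(\zeta, m, t)$-module algebra structure on $A$ with the extra requirement that $x^N$ act by zero, and conversely any $\widetilde H_n$-structure on $A$ satisfying $x^N \cdot A = 0$ descends. By Corollary~\ref{cor:hnzmtinnerfaithful}, inner-faithfulness amounts to $x$ not acting by zero, since $\langle y \rangle$ already acts faithfully. Lemma~\ref{lem:htildema} then parametrizes such $\widetilde H_n$-structures by a nonzero scalar $\gamma \in \kk$ via $x \cdot u^p = \gamma \, (p)_{\zeta^m} \, u^{p+t}$, and tells us that $x^N \cdot u^p = 0$ precisely when $d := n/m$ divides $p + it$ for some $0 \le i < N$.

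The existence question therefore reduces to asking: for which $m, t$ does the congruence $it \equiv -p \pmod d$ have a solution with $0 \le i < N$ for every $p \in \Z$? When $\gcd(t, d) = 1$, the integer $t$ is invertible modulo $d$, and the unique reduced solution $i_0 \in \{0, \ldots, d-1\}$ suffices because condition (c) gives $N = d$. When $\gcd(t, d) > 1$, taking $p = 1$ kills the congruence, since any prime dividing both $t$ and $d$ cannot divide $it + 1$. Hence existence holds if and only if (a).

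The remaining ``in particular'' claims then follow quickly. If $t = 1$, (a) is trivial. Conversely, assuming (a), for each prime $p \mid t$ the relation $p \nmid n/m$ gives $v_p(n) = v_p(m)$, so $v_p(t) \le v_p(n) = v_p(m)$, whence $t \mid m$. The explicit formula $x \cdot u = \gamma u^{t+1}$ is Lemma~\ref{lem:htildema} at $p = 1$, using $(1)_{\zeta^m} = 1$. I expect the one subtlety requiring care is confirming that the window $0 \le i < N$ is wide enough to contain a solution whenever $\gcd(t, d) = 1$, which is exactly the content of the equivalence (a) $\Leftrightarrow$ (c).
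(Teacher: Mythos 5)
Your proposal is correct and follows essentially the same route as the paper: reducing to $\widetilde H_n(\zeta,m,t)$-module algebra structures via Lemma~\ref{lem:htildema} and analyzing when $n/m$ divides some $p+it$ with $0 \le i < N$. The only cosmetic differences are that you spell out the equivalence of (a)--(c) (which the paper leaves as elementary) and argue the converse by contrapositive at $p=1$ via a common prime of $t$ and $n/m$, where the paper writes $1 = -it + b\,n/m$ and invokes B\'ezout directly.
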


   \begin{proof}
      The equivalence of the three conditions follows from elementary group theory and number theory.
      First, assume these conditions hold.
      By definition, $A(H_n(\zeta,m,t)) = \kk[u]/(u^n - 1)$.
      For any nonzero $\gamma \in \kk$, by defining $y \cdot u = \zeta u$ and $ x \cdot u = \gamma u^{1+t}$, it is easy to check that $A(H_n(\zeta,m,t))$ is a $\widetilde H_n(\zeta,m,t)$-module algebra.
      In order to get a $H_n(\zeta,m,t)$-module algebra structure, we need only check that $x^N$ acts by zero.
      By Lemma~\ref{lem:htildema}, we must check that for each $p$, we get that $n/m$ divides $p + it$ for some $0 \leq i < N$.
      By assumption, $n/m = N$ is relatively prime to $t$.
      Thus, for any value of $p$, $\{p + it\}_{i = 0}^{N-1}$ consists of $N$ distinct values mod $N$.
      Thus, for exactly one value of $i$, we have $p + it \equiv 0 \mod N$.
      Therefore, $x^N \cdot u^p = 0$ for all $p$, so we have an $H_n(\zeta,m,t)$-module algebra structure.
      By Corollary~\ref{cor:hnzmtinnerfaithful}, this action is inner-faithful.
      
      On the other hand, fix an $H_n(\zeta, m, t)$-module algebra structure $A \defeq A(H_n(\zeta, m, t)) \cong \kk[u]/(u^n - 1)$.
      Since the $H_n(\zeta,m,t)$-module structure on $A(H_n(\zeta, m, t))$ is inner-faithful, by Corollary~\ref{cor:hnzmtinnerfaithful}, $x \cdot u \neq 0$.
      By pulling back along the projection $\widetilde H_n(\zeta,m,t) \to H_n(\zeta,m,t)$, $A$ is a $\widetilde H_n(\zeta,m,t)$-module algebra, with $ x^N \cdot u = 0$.
      Thus, by Lemma~\ref{lem:htildema}, we have $x \cdot u = \gamma u^{t+1}$.
      Moreover, by the same lemma, $1 + it \equiv 0 \mod {n/m}$ for some $0 \leq i < N$.
      That is, we can write $1 = -it + bn/m$ for some $i,b \in \Z$.
      Therefore, $\text{gcd}(t, n/m) = 1$.
   \end{proof}

Proposition~\ref{prop:minexistence} generalizes Montgomery and Schneider's result (stated in Theorem~\ref{thm:taft}), which examines the Taft algebras (the case that $m = t = 1$).
Note that in their work, $x$ acts by lowering the degree of $u$ rather than raising it.
This is due to the fact that they use the relation $xy = \zeta yx$ rather than $yx = \zeta xy$.   
By Proposition~\ref{prop:gtaftasqls} and Corollary~\ref{cor:taftasqls}, we obtain the following result for coradically graded generalized Taft algebras, in general.
   
\begin{corollary}
   Consider a coradically graded generalized Taft algebra $T(n, N, 0) = H_n(\zeta, 1, n/N)$ for some $N$ dividing $n$.
   Then $T(n,N,0)$-module algebra structures on $A(T(n,N,0))$ as in Notation~\ref{not:ah} exist if and only if $n = N$, i.e. if and only if $T(n,N,0)$ is a Taft algebra.
   \qed
\end{corollary}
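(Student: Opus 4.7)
The plan is to deduce this corollary directly from Proposition~\ref{prop:minexistence} applied to the parameters $m = 1$ and $t = n/N$ coming from the identification $T(n,N,0) = H_n(\zeta, 1, n/N)$ given by Proposition~\ref{prop:gtaftasqls}. Under this identification, the relevant equivalent condition of Proposition~\ref{prop:minexistence} to check is condition (a): $\gcd(t, n/m) = 1$, which in our setting becomes $\gcd(n/N,\, n) = 1$.

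First I would record the parameter translation $m = 1$, $t = n/N$, and $N = \mathrm{ord}(\zeta^{mt}) = \mathrm{ord}(\zeta^{n/N})$, which is consistent with $\zeta$ being a primitive $n$-th root of unity. Then I would observe that since $n/N$ divides $n$, we have $\gcd(n/N, n) = n/N$. Thus condition (a) of Proposition~\ref{prop:minexistence} reduces to the equality $n/N = 1$, i.e.\ $n = N$. Conversely, if $n = N$, then $t = 1$ and the ``in particular'' part of Proposition~\ref{prop:minexistence} immediately supplies module algebra structures of the form described in Notation~\ref{not:ah}.

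Finally, I would invoke Corollary~\ref{cor:taftasqls}, which states that $H_n(\zeta, m, t)$ is a Taft algebra precisely when $m = t = 1$, to conclude that the case $n = N$ is exactly the case where $T(n,N,0) \cong T_n(\zeta)$ is a Taft algebra. Since both implications have been established, the corollary follows.

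There is essentially no obstacle here: the only subtlety is the bookkeeping of parameters between the two presentations ($T(n,N,0)$ versus $H_n(\zeta, 1, n/N)$), and verifying that $N = n/m$ holds under this identification, which is a direct consequence of $m = 1$. Once the translation is in place, the corollary is a one-line consequence of Proposition~\ref{prop:minexistence}.
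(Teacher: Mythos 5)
Your proof is correct and follows exactly the route the paper intends (the paper marks this corollary with \qed as an immediate consequence of Proposition~\ref{prop:minexistence} and the identification in Proposition~\ref{prop:gtaftasqls}): with $m=1$ and $t=n/N$, condition (a) reads $\gcd(n/N,n)=n/N=1$, forcing $n=N$, and Corollary~\ref{cor:taftasqls} identifies that case with the Taft algebras. The parameter bookkeeping, including the check that $\mathrm{ord}(\zeta^{n/N})=N$, is all in order.
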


Thus, we have answered Question~\ref{questions}(a,b) for coradically graded generalized Taft algebras.
We will consider a non-coradically graded generalized Taft algebra, namely $T(4,2,1)$, in Section~\ref{sect:gentaft}.

\subsection{Extending module algebra structures on $A(H_n(\zeta,m,t))$ to $D(H_n(\zeta,m,t))$}

Recall that the Hopf algebra $H_n(\zeta,m,t)$ is determined by a primitive $n^{th}$ root of unity $\zeta$ in $\kk$ and two positive integer divisors of $n$: $m$, which is used to define the coalgebra structure, and $t$ which is used to define the algebra structure.
It is also assumed that $n \nmid mt$.
We now assume $H_n(\zeta,m,t)$-module algebra structures on $A(H_n(\zeta, m, t))$ as in Notation~\ref{not:ah} exist (that is, that $\gcd(t,n/m) = 1$, by Proposition~\ref{prop:minexistence}) and explore when such structures extend to be $D(H_n(\zeta, m, t))$-module algebras.
Recall from Section~\ref{subsubsect:hnzmtdouble} that $D(H_n(\zeta, m, t))$ is generated by grouplike elements $y$ and $Y$, a $(y^m, 1)$-skew primitive element $x$, and a $(1, Y^t)$-skew primitive element $X$, subject to the relations 
\begin{gather*}
   y^n = Y^n = 1, \quad
   x^N = X^N = 0, \quad
   yx = \zeta^t xy, \quad
   YX = \zeta^m XY, \\
   yY = Yy, \quad
   xY = \zeta^m Yx, \quad
   yX = \zeta^{-t} Xy, \quad
   xX - Xx = Y^t - y^m
\end{gather*}
where $ N = \ord (\zeta^{mt}) = n/m$. 

\begin{theorem} \label{thm:ahnzmt}
   Fix an $H_n(\zeta,m,t)$-module algebra structure on $A \defeq A(H_n(\zeta, m, t)) = \kk[u]/(u^n - 1)$ as in Notation~\ref{not:ah}. 
   If the action of $H_n(\zeta, m, t)$ extends to make $A$ a $D(H_n(\zeta, m, t))$-module algebra, then there exists a nonzero scalar $\gamma$ and scalar $\delta \in \kk$, and a natural number $ 0 < d < n $ with $m \equiv -dt \mod n$ such that:
   \[
     y \cdot u = \zeta u, \quad 
     Y \cdot u = \zeta^d u, \quad
     x \cdot u = \gamma u^{1+t}, \quad
     \text{and} \quad X \cdot u = \delta u^{1-t}.
   \]
   
   If $m \neq n/2$ (that is, if $N \neq 2$), then $\gamma$ and $\delta$ are related by the identity
   \[
      \gamma \delta = \frac {\zeta^{-m} - 1}{(n-t)_{\zeta^m}}.
   \]
   In this case, the action of $X$ on $A$ is determined by the $H_n(\zeta, m, t)$-module algebra structure, and if further, $t = 1$, then the action of $Y$ is as well.
   
   On the other hand, if $m = n/2$, there is no such equation relating $\gamma$ and $\delta$.
   
   Conversely, the conditions imposed above on $\delta$ and $d$ are sufficient to define a $D(H_n(\zeta, m, t))$-module algebra structure on $A$.
\end{theorem}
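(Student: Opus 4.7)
The plan is to proceed in two directions: first establish necessity by systematically applying the defining relations of $D(H_n(\zeta,m,t))$ to the generator $u$, then confirm sufficiency by checking that the resulting candidate action extends consistently to all of $A$. Throughout we exploit the eigenspace decomposition $A = \bigoplus_{i=0}^{n-1} A_i$ with $A_i = \kk u^i$ from Remark~\ref{rem:astructure}.

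For the necessity direction, one first determines the shape of the action on $u$. Since $Y$ commutes with $y$ in $D(H_n(\zeta,m,t))$, $Y$ preserves each eigenspace, and $Y^n = 1$ forces $Y \cdot u = \zeta^d u$ for some $0 \le d < n$. The relation $yX = \zeta^{-t}Xy$ implies $X \cdot u$ lies in $A_{1-t}$ (indices mod $n$), giving $X \cdot u = \delta u^{n+1-t}$ for some $\delta \in \kk$. Applying $xY = \zeta^m Yx$ to $u$, and using that $Y$ is grouplike so $Y \cdot u^{1+t} = \zeta^{d(1+t)} u^{1+t}$, yields $\zeta^{m+dt} = 1$, i.e.\ $m \equiv -dt \pmod n$; since $0 < m < n$ this forces $0 < d < n$.

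The central calculation is the commutator relation $xX - Xx = Y^t - y^m$ applied to $u$. Using $\Delta(X) = Y^t \tens X + X \tens 1$ and $Y^t \cdot u = \zeta^{dt} u = \zeta^{-m} u$, an induction analogous to Lemma~\ref{lem:htildema} gives
\[
  X \cdot u^p = \delta\,(p)_{\zeta^{-m}}\, u^{p+n-t}.
\]
Combined with $x \cdot u^p = \gamma\,(p)_{\zeta^m}\, u^{p+t}$ from that lemma, the relation becomes
\[
  \gamma\delta\,\bigl[(n+1-t)_{\zeta^m} - (1+t)_{\zeta^{-m}}\bigr] = \zeta^{-m} - \zeta^m.
\]
A direct simplification rewrites the bracket as $(\zeta^{-mt}-1)(\zeta^m+1)/(\zeta^m-1)$ and the right side as $-\zeta^{-m}(\zeta^m-1)(\zeta^m+1)$. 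When $2m \ne n$ we have $\zeta^m + 1 \ne 0$, and cancelling yields the asserted $\gamma\delta = (\zeta^{-m}-1)/(n-t)_{\zeta^m}$. When $2m = n$ both sides vanish identically (the hypothesis $n \nmid mt$ forces $t$ odd, so $(-1)^{n+1-t} = (-1)^{1+t} = 1$), and $\delta$ remains a free parameter.

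For sufficiency, we define the action by the stated formulas and extend to all of $A$ via the module algebra axioms. It then remains to verify that every relation of $D(H_n(\zeta,m,t))$ acts correctly on each $u^p$ and that $u^n$ maps to $1$. The grouplike and $q$-commutation relations reduce to $m + dt \equiv 0 \pmod n$, the mixed $xX - Xx$ relation follows on each $u^p$ from the same computation as above (applied with $p$ in place of $1$), and the identities $x \cdot u^n = \gamma(n)_{\zeta^m} u^{n+t} = 0$ and $X \cdot u^n = \delta(n)_{\zeta^{-m}} u^{2n-t} = 0$ give compatibility with $u^n = 1$. The main obstacle is showing that $X^N$ acts by zero: iterating the formula above yields $X^q \cdot u^p = \delta^q \prod_{i=0}^{q-1}(p-it)_{\zeta^{-m}}\, u^{p-qt}$, so for $q = N$ and every $p$ one needs some factor $(p-it)_{\zeta^{-m}}$ to vanish, i.e.\ $N \mid p - it$ for some $0 \le i < N$. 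This is ensured by $\gcd(t,N) = 1$ from Proposition~\ref{prop:minexistence}, which guarantees $\{p - it \bmod N\}_{i=0}^{N-1}$ covers every residue exactly once.
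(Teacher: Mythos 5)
Your proposal is correct and follows essentially the same route as the paper's proof: eigenspace analysis under $y$ pins down the form of each generator's action, the relation $xX - Xx = Y^t - y^m$ applied to $u$ yields the constraint on $\gamma\delta$ (with the same case split at $\zeta^m = -1$), and $\gcd(t,N)=1$ handles $X^N$ acting by zero. One cosmetic remark: in $D(H_n(\zeta,m,t))$ the element $X$ is $(1,Y^t)$-skew primitive, so $\Delta(X) = 1 \tens X + X \tens Y^t$ rather than $Y^t \tens X + X \tens 1$ as you wrote; since $A$ is commutative this does not affect your induction, which produces the same formula for $X \cdot u^p$.
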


We will need the following lemma about $q$-symbols in the proof of Theorem~\ref{thm:ahnzmt}.
It follows from the definitions.

\begin{lemma} \label{lem:quantum} 
   Let $q \neq 1 \in \kk$. 
   Then the following statements hold.
          
   \begin{enum}
  
      \item
         Suppose that $\ord(q) = n$ and $p \equiv r \mod n$ for integers $p, r > 0$.
         Then $(p)_q = (r)_q$;
         
      \item
         If $\ord(q) | m$ and $0 \leq p \leq m$, then $(p)_{q^{-1}} = -q (m-p)_q$.
      
   \end{enum}
   
   \vspace{-.2in}\qed
\end{lemma}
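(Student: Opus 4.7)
\medskip

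\noindent\textbf{Proof proposal.} Both statements are elementary identities about the closed form
\[
   (k)_q = \frac{q^k - 1}{q - 1}
\]
recorded in Section~\ref{subsect:qsymbols}, and my plan is to reduce each claim to a one-line manipulation of this expression.

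For part (a), the hypothesis $\ord(q) = n$ gives $q^n = 1$, so $p \equiv r \pmod n$ implies $q^p = q^r$. Substituting into the closed form yields $(p)_q = (q^p - 1)/(q-1) = (q^r-1)/(q-1) = (r)_q$. (Note that $q \neq 1$ since $n \geq 2$, so the denominator is nonzero, and the sum-formula interpretation is available.)

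For part (b), I would first derive the useful general identity $(p)_{q^{-1}} = q^{1-p}\,(p)_q$, obtained by multiplying numerator and denominator of $(q^{-p}-1)/(q^{-1}-1)$ by $-q$ and simplifying:
\[
  (p)_{q^{-1}} \;=\; \frac{q^{-p}-1}{q^{-1}-1} \;=\; \frac{q - q^{1-p}}{q-1} \;=\; q^{1-p}(p)_q.
\]
Then I would invoke the hypothesis $\ord(q)\mid m$, so $q^m = 1$, which gives $q^{m-p} = q^{-p}$ and hence
\[
  (m-p)_q \;=\; \frac{q^{m-p}-1}{q-1} \;=\; \frac{q^{-p}-1}{q-1}.
\]
Multiplying this by $-q$ produces $(q - q^{1-p})/(q-1)$, which matches the expression for $(p)_{q^{-1}}$ derived above, establishing $(p)_{q^{-1}} = -q\,(m-p)_q$.

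There is no substantive obstacle here: the only mild point of care is that the closed-form expressions require $q \neq 1$, which is part of the standing assumption, and that $q^{-1}\neq 1$, which holds for the same reason. The hypothesis $0 \leq p \leq m$ in (b) is not used in the algebraic manipulation itself but is the natural range in which the symbols $(p)_{q^{-1}}$ and $(m-p)_q$ are simultaneously nonnegative, so it is merely a convention ensuring the identity is stated where it will be applied later.
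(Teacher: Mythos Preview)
Your proof is correct. The paper itself does not give an explicit argument for this lemma: it simply remarks that the statements ``follow from the definitions'' and marks the lemma with a \qed. Your direct manipulation of the closed form $(k)_q = (q^k-1)/(q-1)$ is exactly the kind of verification the paper is implicitly invoking, so your approach is the same in spirit and supplies the details the paper omits.
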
 

   \begin{proof}[Proof of Theorem~\ref{thm:ahnzmt}]
      Actions of $H_n(\zeta,m,t)$ on $A(H_n(\zeta,m,t))$ as in Notation~\ref{not:ah} are given by Proposition~\ref{prop:minexistence}.
      In particular, $y \cdot u = \zeta u$, and $x \cdot u = \gamma u^{1+t}$ for some nonzero $\gamma \in \kk$.
      Since $yY \cdot u = Yy \cdot u = \zeta Y \cdot u$, we see that $Y \cdot u \in A_1 = \kk u$.
      Thus, $Y \cdot u = \delta u$ for some $\delta \in \kk$. 
      However, because $Y^n$ must act by the identity, $\delta$ must be an $n^{th}$ root of unity. 
      That is, $\delta = \zeta^d$ for some $0 \leq d < n$.
      
      On one hand, $xY \cdot u = \zeta^d x \cdot u = \zeta^d \gamma u^{1+t}$.
      On the other hand, $\zeta^m Yx \cdot u = \zeta^m \gamma Y \cdot u^{1+t} = \zeta^{m + d(1+t)} \gamma u^{1+t}$. 
      Therefore, since $\gamma \neq 0$, it must be the case that $d \equiv m + d(1+t) \mod n$.
      That is, $m \equiv -dt \mod n$.
      In particular, this implies $d \neq 0$.
      
      We also have $yX \cdot u = \zeta^{-t}Xy \cdot u = \zeta^{1-t} X \cdot u$, showing that $X \cdot u \in A_{1-t} = \kk u^{1-t}$. 
      Thus, $X \cdot u = \delta u^{1-t}$ for some $\delta \in \kk$.
      One sees by induction that $X \cdot u^p = \delta~(p)_{\zeta^{dt}} u^{p-t}$.  
      Thus, on one hand, by Lemma~\ref{lem:htildema} and Lemma~\ref{lem:quantum},
      \[
         (xX - Xx) \cdot u = \delta x \cdot u^{1-t} - \gamma X \cdot u^{1+t} 
         = \delta \gamma (n+1-t)_{\zeta^m} u^{n+1} - \gamma \delta (1+t)_{\zeta^{dt}} u^{n+1},
      \]
      and on the other hand,
      $
         (Y^t - y^m) \cdot u = (\zeta^{dt} - \zeta^m) u.
      $
      Therefore, since $m \equiv -dt \mod n$, we have
      \[
         \zeta^{-m} - \zeta^m = \gamma \delta \left( (n+1-t)_{\zeta^m} - (1+t)_{\zeta^{-m}} \right).
      \]
      Note that $\ord(\zeta^m)$ divides $n$ and that $1 < 1+t \leq n$.
      Thus, using Lemma~\ref{lem:quantum},
      \begin{align*}
         (n + 1 - t)_{\zeta^m} - (1+t)_{\zeta^{-m}} &~=~ \zeta^m (n - t)_{\zeta^m} + 1 + \zeta^m (n - 1 - t)_{\zeta^m} 
         ~=~ (\zeta^m + 1) (n-t)_{\zeta^m}. 
      \end{align*}
      Therefore, if $\zeta^m \neq -1$ (or equivalently, if $m \neq n/2$), then since $(\zeta^m + 1)(\zeta^{-m} - 1) = \zeta^{-m} - \zeta^m$, we have
      \[
         \gamma \delta = \frac {\zeta^{-m} - 1}{(n-t)_{\zeta^m}}.
      \]
      If $\zeta^m = -1$, then $\zeta^{-m} - \zeta^m = 0$,
      so we gain no new restrictions on $\delta$.
      
      We also have $YX \cdot u = \delta Y \cdot u^{1-t} = \delta \zeta^{d(1-t)} u^{1-t}$, and $\zeta^m XY \cdot u = \zeta^{m+d} X \cdot u = \delta \zeta^{m+d} u^{1-t}$.
      Therefore, $\delta = 0$ or $m+d \equiv d(1-t) \mod n$. 
      However, we already know $m \equiv -dt \mod n$, so we have no further restrictions on $\delta$ or $d$.
      
      Finally, we must have $X^N \cdot u^p = 0$ for all $p$.
      A simple calculation shows that 
      \[
         X^N \cdot u^p = \delta^N \left( \prod_{i=0}^{N-1} (p + i(n-t))_{\zeta^{dt}} \right) u^{p-Nt}.
      \]
      If $\delta = 0$, we are done.
      Otherwise, $X^N \cdot u^p = 0$ if and only if $ \ord(\zeta^{dt})$ divides some element of $\{p + i(n-t)\}_{i=0}^{N-1}$.
      Since $\ord(\zeta^{dt}) = \ord(\zeta^m) = n/m = N$ and $\text{gcd}(t, N) = 1$ by Proposition~\ref{prop:minexistence}, the set consists of $N$ distinct values mod $N$.
      Therefore, $N$ divides exactly one of them.
      Thus, $X^N \cdot u^p = 0$ for all $p$.
      
      The converse statement, that the conditions imposed on $\delta$ and $d$ are sufficient for making $A(H_n(\zeta,m,t))$ a $D(H_n(\zeta,m,t))$-module algebra, is straightforward to check.
   \end{proof}
   
Note that this result generalizes the work of Montgomery and Schneider (stated in Theorem~\ref{thm:taftext}) and shows that there are other Hopf algebras closely related to Taft algebras, for which there is a unique extension of the action of $H$ on $A(H)$ to $D(H)$, namely $H_n(\zeta,m,1)$ for any $m \mid n$ with $m \neq n/2$.
   
\begin{corollary}
   Suppose $H_n(\zeta,m,t)$-module algebra structures on $A \defeq A(H_n(\zeta, m, t))$ as in Notation~\ref{not:ah} exist.
   If $m \neq n / 2$ (e.g., if $n$ is odd), then there are precisely $t$ ways to extend this action to make $A$ a $D(H_n(\zeta, m, t))$-module algebra.
   In particular, if $t = 1$, then the desired $H_n(\zeta,m,t)$-module algebra structure on $A(H_n(\zeta, m, t))$ exists, and the way to extend the action to $D(H_n(\zeta, m, t))$ is unique.
   
   If $m = n/2$, then in order to extend the action of $H_n(\zeta,m,t)$ on $A$ to an action of $D(H_n(\zeta,m,t))$, there are $t$ ways to define the action of the generator $Y$ and the choice for the action of $X$ is parametrized by $\kk$.
\end{corollary}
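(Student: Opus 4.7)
The plan is to derive this corollary directly from Theorem~\ref{thm:ahnzmt} by counting the number of valid parameters $(d, \delta)$ arising in that theorem. By Theorem~\ref{thm:ahnzmt}, a $D(H_n(\zeta,m,t))$-module algebra structure on $A$ extending the given $H_n(\zeta,m,t)$-action is completely determined by (i) a choice of integer $d$ with $0 < d < n$ satisfying $m \equiv -dt \pmod n$, and (ii) a scalar $\delta \in \kk$, which is uniquely pinned down by $\gamma\delta = (\zeta^{-m}-1)/(n-t)_{\zeta^m}$ when $m \neq n/2$ and is unconstrained when $m = n/2$. So the entire task reduces to counting admissible values of $d$.

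The main technical step I would carry out is a short number-theoretic lemma showing that the hypothesis $\gcd(t, n/m)=1$ from Proposition~\ref{prop:minexistence} forces $t \mid m$. The argument is by contradiction: if some prime $p$ had $v_p(t) > v_p(m)$, then $p \mid t$ and (since $t \mid n$) also $p \mid n/m$, violating $\gcd(t,n/m)=1$. Writing $m = tm'$, the congruence $m \equiv -dt \pmod n$ becomes $t(d+m') \equiv 0 \pmod n$, i.e.\ $d \equiv -m' \pmod{n/t}$, which has exactly $t$ solutions in the range $0 \leq d < n$. Since $m < n$ (because $g = y^m$ is a non-identity grouplike element of $H_n(\zeta,m,t)$), $d = 0$ is not a solution, so all $t$ solutions in fact lie in the required range $0 < d < n$.

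Combining the two counts gives the result: when $m \neq n/2$ there are precisely $t$ valid pairs $(d,\delta)$, hence $t$ extensions; when $m = n/2$ there are $t$ valid values of $d$ and a full copy of $\kk$ worth of $\delta$'s for each, matching the parameterization in the statement. The specialization to $t=1$ is then immediate: existence of $A(H_n(\zeta,m,1))$ is automatic from Proposition~\ref{prop:minexistence} since $\gcd(1,n/m) = 1$ trivially, and when $m \neq n/2$ the count collapses to a unique extension.

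I do not expect any genuine obstacle; the only subtlety is the auxiliary observation $t \mid m$, which needs to be extracted explicitly because Theorem~\ref{thm:ahnzmt} only records the congruence $m \equiv -dt \pmod n$ without packaging the counting of its solutions. Once that divisibility is in hand, the counts fall out from elementary linear congruences and the $\delta$-clause of Theorem~\ref{thm:ahnzmt}.
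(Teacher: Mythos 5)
Your proposal is correct and follows essentially the same route as the paper: reduce to counting the pairs $(d,\delta)$ permitted by Theorem~\ref{thm:ahnzmt}, use $t \mid m$ (which the paper takes directly from the statement of Proposition~\ref{prop:minexistence}) to see that the congruence $m \equiv -dt \pmod n$ has exactly $t$ solutions with $0 < d < n$, and note that $\delta$ is pinned down when $m \neq n/2$ and free when $m = n/2$. Your explicit derivation of $t \mid m$ from $\gcd(t,n/m)=1$ and your verification that $d=0$ is excluded are details the paper leaves implicit, but they do not change the argument.
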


   \begin{proof}
      By Proposition~\ref{prop:minexistence}, $t | m$.
      Thus, there are $t$ distinct choices for $d$ such that $0 < d < n$ and $m \equiv -dt \mod n$.
      If $m \neq n/2$, the action of $X$ is fixed by Theorem~\ref{thm:ahnzmt}.
      Otherwise, any choice of $\delta \in \kk$ will suffice to define the action of $X$.
   \end{proof}
   
While the Hopf algebra $H_n(\zeta,m,t)$ generalize the Taft algebras as bosonizations of quantum linear spaces over finite cyclic groups, there are other coradically graded generalizations and directions to consider for further study.

\begin{question} \label{q:nichols}
  What can be said about Question~\ref{questions} for quantum linear spaces of higher rank and/or over abelian non-cyclic groups?
  What about for braided vector spaces of different Cartan types (i.e. other than $A_1^\theta$) which can be realized in ${}^\Gamma_\Gamma \mc{YD}$?
  In particular, is there an even larger class of Hopf algebras which generalize the Taft algebra case in having a unique extension from the action of $H$ and $A$ to an action of $D(H)$ on $A$?
\end{question}

For instance, one could start by considering the actions of finite-dimensional pointed Hopf algebras presented in work of Etingof and Walton \cite{ew-phaof1, ew-phaof2}.

We consider Question~\ref{questions} for non-coradically graded finite-dimensional pointed Hopf algebras in the remainder of this work.

\section{The generalized Taft algebra, $T(4,2,1)$} \label{sect:gentaft}

Recall Definition~\ref{def:gentaft}:
For $n,N \in \N$ with $N \mid n$, a primitive $N^{th}$ root of unity $q$ in $\kk$, and $\alpha \in \kk$, the generalized Taft algebra $T(n,N,\alpha)$ is the Hopf algebra generated by a grouplike element $g$ and a $(g,1)$-skew primitive element $x$, subject to the relations
\[
   g^n = 1, \quad x^N = \alpha (g^N - 1), \quad gx = q xg.
\]
Note that by scaling $x$, we can assume without loss of generality that $\alpha = 0$ or $\alpha = 1$.
We saw that the algebras $H_n(\zeta, m, t)$ included the case that $\alpha = 0$, i.e. the coradically graded case.
Here, we will consider the simplest non-coradically graded case: when $n = 4$, $N = 2$, and $\alpha = 1$.
As an algebra, $T(4,2,1)$ is generated by a grouplike element $g$ and a $(g, 1)$-skew primitive element $x$, subject to the relations 
\[
   g^4 = 1, \quad 
      x^2 = g^2 - 1, \quad
      gx = -xg .
\]
As in the previous section (see Lemma~\ref{lem:primitives} and Corollary~\ref{cor:hnzmtinnerfaithful}), we will determine the primitive elements, as a means of determining if an action is inner-faithful.

\begin{lemma}
   For $b \in \{0,1,2,3\}$, we have that
   \[
      P_{g^b, 1}(T(4,2,1)) = \begin{cases}
         \kk x +\kk(g^b - 1), & \text{ if $b\equiv1 \mod 4$} \\
        \kk(g^b - 1), & \text{ otherwise}.
      \end{cases}
   \]
\end{lemma}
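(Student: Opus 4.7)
The plan is to compute $P_{g^b, 1}(T(4,2,1))$ directly from the algebra presentation. The relations $g^4 = 1$, $x^2 = g^2 - 1$, and $gx = -xg$ yield a PBW-type $\kk$-basis $\{g^i, g^i x : 0 \leq i \leq 3\}$, so I will expand an arbitrary candidate $(g^b, 1)$-skew primitive $y$ in this basis and match the two sides of the defining equation $\Delta(y) = g^b \otimes y + y \otimes 1$ in the tensor-product basis.

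The easy direction is immediate: $\Delta(g^b - 1) = g^b \otimes g^b - 1 \otimes 1 = g^b \otimes (g^b - 1) + (g^b - 1) \otimes 1$ shows $g^b - 1 \in P_{g^b, 1}(T(4,2,1))$ for every $b$, and by definition $x \in P_{g, 1}(T(4,2,1))$.

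For the reverse inclusion, I would write
\[
   y = \sum_{i=0}^3 \alpha_i g^i + \sum_{i=0}^3 \beta_i g^i x
\]
and use the identity $\Delta(g^i x) = (g^i \otimes g^i)(g \otimes x + x \otimes 1) = g^{i+1} \otimes g^i x + g^i x \otimes g^i$ to expand $\Delta(y)$. Comparing with $g^b \otimes y + y \otimes 1$ decouples into two independent systems: one on the $\alpha_i$ (involving only tensors of the form $g^j \otimes g^k$), and one on the $\beta_i$ (involving tensors of the form $g^j \otimes g^k x$ and $g^j x \otimes g^k$). The $\alpha$-system says precisely that $\sum_i \alpha_i g^i$ is a $(g^b, 1)$-skew primitive inside the group algebra $\kk\langle g \rangle$, which forces the group-like part of $y$ to be a scalar multiple of $g^b - 1$ when $b \neq 0$ and to vanish when $b = 0$ (the standard fact that group algebras have no nontrivial primitives in characteristic $0$). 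For the $\beta$-system, matching coefficients of $g^j \otimes g^k x$ forces $\beta_k = 0$ unless $k \equiv b - 1 \pmod 4$, while matching coefficients of $g^j x \otimes g^k$ forces $\beta_j = 0$ unless $j = 0$; these are simultaneously satisfiable only when $b \equiv 1 \pmod 4$, in which case exactly $\beta_0$ is unconstrained.

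No serious obstacle arises; the main step is just the bookkeeping of coefficient equations in the tensor-product basis, entirely analogous to the argument behind Lemma~\ref{lem:primitives} for the coradically graded case. Indeed, $T(4,2,1)$ is a lifting of $H_4(\zeta, 1, 2)$ for $\zeta$ a primitive fourth root of unity, so the content of the lemma is that passing to this lifting does not create any new $(g^b, 1)$-skew primitives.
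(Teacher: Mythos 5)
Your proposal is correct and follows essentially the same route as the paper: expand a candidate skew primitive in the PBW basis $\{g^i, g^i x\}$, apply $\Delta$, and match coefficients in the tensor-product basis (the paper uses the basis $\{x^i g^j\}$ and shortcuts by comparing only the $\blank \tens 1$ terms to get the containment $P_{g^b,1} \subseteq \kk x + \kk(1 - g^b)$, but this is the same computation). Your fuller coefficient matching on the $g^j \tens g^k x$ and $g^j x \tens g^k$ terms makes the case distinction on $b$ explicit, which the paper leaves implicit; both arguments are sound.
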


   \begin{proof}
      Let $\Phi = \sum_{0 \leq i < 2, 0 \leq j < 4} \alpha_{i,j} x^i g^j$ and suppose $\Phi \in P_{g^b, 1}(H)$ for some fixed $b \in \Z$.
      Then on the one hand, $\Delta(\Phi) = g^b \tens \Phi + \Phi \tens 1$. 
      On the other hand,
      \begin{align*}
         \Delta(\Phi) 
            &= \sum_{i=0}^1 \sum_{j=0}^3 \alpha_{i,j} \Delta(x)^i \Delta(g)^j
            = \sum_{i=0}^1 \sum_{j=0}^3 \alpha_{i,j} \sum_{k=0}^i x^{i-k} g^{j+k} \tens x^k g^j \\
            &= \sum_{k=0}^1 \sum_{j=0}^3 \left( \sum_{i=k}^1 \alpha_{i,j} x^{i-k} g^{j+k} \right) \tens x^k g^j.
      \end{align*}
      By comparing $\blank \tens 1$ terms, we see that 
      \[
         \Phi + \alpha_{0,0} g^b = \alpha_{0,0} 1 + \alpha_{1,0} x.
      \]
      Therefore, we have $P_{g^b, 1}(T(4,2,1)) \subseteq \kk x +\kk(1 - g^b)$.
      Now note that $x \in P_{g,1}(T(4,2,1))$ and that $1 - g^b \in P_{g^b, 1}(T(4,2,1))$ for any $b$.
   \end{proof}

\begin{corollary} \label{cor:t421innerfaithful}
   A left $T(4,2,1)$-module $M$ is inner-faithful if and only if $G(T(4,2,1)) = \langle g \rangle$ acts faithfully and $x \cdot M \neq 0$.
\end{corollary}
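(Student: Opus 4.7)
The proof will follow the same template as Corollary~\ref{cor:hnzmtinnerfaithful}, relying on the preceding lemma and on Corollary~\ref{cor:ifprims}. The forward direction is immediate: if the action is inner-faithful, then the principal Hopf ideal generated by any nonzero skew-primitive element cannot act by zero; in particular $x \cdot M \neq 0$, and $g$ must act faithfully since otherwise some $g^b - 1$ (for $0 < b < 4$) would lie in the kernel of the action, generating a nontrivial Hopf ideal.

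For the reverse direction, assume $\langle g \rangle$ acts faithfully on $M$ and that $x \cdot M \neq 0$. By Corollary~\ref{cor:ifprims} combined with the preceding lemma, it suffices to show that every nonzero element of $P_{g^b,1}(T(4,2,1))$ acts by nonzero for $b = 0, 1, 2, 3$. For $b \not\equiv 1 \pmod 4$, the space equals $\kk(g^b - 1)$, and since $g$ acts faithfully of order $4$, the operator $g^b - 1$ is nonzero for $b \neq 0$; for $b = 0$, the space is $\{0\}$ and there is nothing to check. The one nontrivial case is $b = 1$, where $P_{g,1}(T(4,2,1)) = \kk x + \kk(g - 1)$, and we must show that no nonzero element $\alpha x + \beta(g - 1)$ acts by zero on $M$.

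The plan for this remaining case is to decompose $M$ into $g$-eigenspaces. Fix a primitive fourth root of unity $\xi \in \kk$, and set $M_i = \{a \in M : g \cdot a = \xi^i a\}$ for $i \in \Z/4\Z$, so that $M = \bigoplus_{i=0}^3 M_i$. Using the relation $gx = -xg = \xi^2 xg$, we obtain that $x$ maps $M_i$ into $M_{i+2}$. If the action of $\alpha x + \beta(g-1)$ were zero, applying it to $u \in M_i$ yields $\alpha (x \cdot u) + \beta(\xi^i - 1) u = 0$, where the first summand lies in $M_{i+2}$ and the second in $M_i$. Since $M_i \cap M_{i+2} = 0$, each summand must vanish separately. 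Choosing $u \in M_i$ with $x \cdot u \neq 0$ (which exists since $x \cdot M \neq 0$ and $x$ respects the eigenspace decomposition) forces $\alpha = 0$, and then $\beta(\xi^i - 1) u = 0$ together with the faithfulness of $\langle g \rangle$ forces $\beta = 0$.

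The main obstacle, such as it is, lies in the $b = 1$ case, and the key idea there is the eigenspace/grading argument that separates the $x$-contribution from the $(g-1)$-contribution. This is precisely the same grading-separation trick used in the proof of Corollary~\ref{cor:hnzmtinnerfaithful}, simplified by the fact that here $n = 4$, $m = 2$, $t = 1$, so $x$ shifts $g$-weight by $2$ while $g - 1$ preserves it.
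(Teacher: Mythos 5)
Your proof is correct and follows exactly the same route as the paper, which simply defers to the argument for Corollary~\ref{cor:hnzmtinnerfaithful}: reduce via Corollary~\ref{cor:ifprims} and the preceding lemma to the skew-primitive spaces, dispose of $b \neq 1$ by faithfulness of $\langle g \rangle$, and handle $P_{g,1} = \kk x + \kk(g-1)$ by the $g$-eigenspace decomposition, using that $x$ shifts the grading while $g-1$ preserves it. Your phrasing of the last step (forcing $\alpha = \beta = 0$ directly from the direct-sum decomposition) is a slightly cleaner packaging of the same separation argument, so no substantive difference.
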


   \begin{proof}
      The proof is essentially the same as that of Corollary~\ref{cor:hnzmtinnerfaithful}.
   \end{proof}
   
Next, we consider the possible structures of $A(T(4,2,1))$.

\subsection{The structure of $A(T(4,2,1))$} \label{sect:t421actions}

Since the group of grouplike elements, $G(T(4,2,1))$, is cyclic of order~4, the module algebra structure $A(T(4,2,1))$ in Notation~\ref{not:ah} must be isomorphic to $\kk[u] / (u^4 - 1)$ as an algebra. 
We determine all such possible $T(4,2,1)$-module structures.

\begin{proposition} \label{prop:at421}
   Let $A = \kk[u]/(u^4 - 1)$. 
   By defining $g \cdot u = \zeta u$ for $\zeta$ a fourth root of unity and $x \cdot u = \gamma u^3$ for $\gamma \in \kk$ satisfying $\gamma^2 = 2\zeta$, we obtain that $A = A(T(4,2,1))$ is a $T(4,2,1)$-module algebra as in Notation~\ref{not:ah}.
   Moreover, this gives all the possible $T(4,2,1)$-module algebra structures on $A(T(4,2,1))$.
\end{proposition}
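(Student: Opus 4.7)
The plan is to mimic the strategy already established for $H_n(\zeta,m,t)$ in Section~\ref{sect:qls}: fix $A \cong \kk[u]/(u^4-1)$ with the $g$-eigenspace decomposition from Remark~\ref{rem:astructure}, use the commutation $gx = -xg$ to pin down the shape of $x \cdot u$, and then impose the non-coradical relation $x^2 = g^2 - 1$ to obtain the single constraint on $\gamma$ and $\zeta$.

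By Notation~\ref{not:ah} and Remark~\ref{rem:astructure}, I may fix a generator $u$ of $A$ with $g \cdot u = \zeta u$ for some primitive $4^{\text{th}}$ root of unity $\zeta$, and the $g$-eigenspaces are $A_i = \kk u^i$. Since $gx = -xg = \zeta^2 xg$, any $a \in A_i$ satisfies $g \cdot (x \cdot a) = \zeta^{i+2}(x \cdot a)$, so $x \cdot A_i \subseteq A_{i+2}$. Applied to $u \in A_1$, this forces $x \cdot u = \gamma u^3$ for some $\gamma \in \kk$, and Corollary~\ref{cor:t421innerfaithful} combined with inner-faithfulness forces $\gamma \neq 0$.

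The substantive step is to impose the relation $x^2 = g^2 - 1$. Using $\Delta(x) = g \tens x + x \tens 1$ and reducing modulo $u^4 = 1$, I would iteratively compute $x \cdot u^2$ and $x \cdot u^3$ via the twisted Leibniz rule, arriving at $x \cdot u^3 = \gamma (1 + \zeta + \zeta^2)\, u = \gamma \zeta u$ (since $\zeta^2 = -1$). Hence $x^2 \cdot u = \gamma^2 \zeta u$, which must equal $(g^2 - 1) \cdot u = (\zeta^2 - 1)u = -2u$. Using $\zeta^{-1} = -\zeta$, this rearranges to exactly $\gamma^2 = 2\zeta$, which also confirms a posteriori that $\zeta$ must have been primitive (for $\zeta = \pm 1$ the equation would collapse to $\gamma = 0$).

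For the converse, given $\zeta$ primitive and $\gamma$ with $\gamma^2 = 2\zeta$, it remains to verify that these formulas extend consistently to a $T(4,2,1)$-module algebra structure on all of $A$, i.e.\ that each relation $g^4 = 1$, $gx + xg = 0$, and $x^2 = g^2 - 1$ acts by zero on every $u^k$. This is pure bookkeeping: the first two follow immediately from $\zeta^4 = 1$ and the fact that $x$ shifts $A_i$ into $A_{i+2}$, while checking $x^2 \cdot u^k = (g^2 - 1) \cdot u^k$ for $k = 0, 1, 2, 3$ reduces to the same computation done above, up to relabeling. The only step where one could slip is the iteration computing $x \cdot u^3$, since that is where both the factor of $\gamma^2$ and the telescoping $1 + \zeta + \zeta^2 = \zeta$ appear together; beyond that, the argument is routine.
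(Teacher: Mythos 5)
Your proposal is correct and follows essentially the same route as the paper's proof: decompose $A$ into $g$-eigenspaces $A_i = \kk u^i$, use $gx=-xg$ to force $x\cdot u\in A_3$, compute $x\cdot u^3 = \gamma(\zeta^2+\zeta+1)u = \gamma\zeta u$ via the twisted Leibniz rule, and impose $x^2=g^2-1$ to get $\gamma^2\zeta=-2$, i.e.\ $\gamma^2=2\zeta$. The only cosmetic differences are your explicit appeal to Corollary~\ref{cor:t421innerfaithful} for $\gamma\neq 0$ (which the relation $\gamma^2=2\zeta$ already forces) and the a posteriori remark about primitivity of $\zeta$; neither changes the argument.
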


   \begin{proof}
      For the first statement, it is easy to check that $A$, as defined, will be a $T(4,2,1)$-module algebra. 
      By Corollary~\ref{cor:t421innerfaithful}, since $x \cdot u \neq 0$, the action on $A$ is inner-faithful.
      
      To see that these are the only possible $T(4,2,1)$-module algebra structures on $A(T(4,2,1))$ as in Notation~\ref{not:ah}, fix such a $T(4,2,1)$-module algebra structure on $A(T(4,2,1))$.
      By Remark~\ref{rem:astructure}, we have that
      \[
         A  = \bigoplus_{ i = 0 }^3 A_i \text{ where } A_i = \{ a \in A \mid g \cdot a = \zeta^i a \} = \kk u^i.
      \]
      Now, since $g \cdot x \cdot u = - x \cdot g \cdot u = - \zeta x \cdot u = \zeta^3  x \cdot u$, we have that $x \cdot u \in A_3 =\kk u^3$. 
      Therefore, $ x \cdot u = \gamma u^3 $ for some $\gamma \in \kk$. 
      We must also have that $x^2 \cdot u = (g^2 - 1) \cdot u$. 
      First, we have $ x^2 \cdot u = \gamma \; x \cdot u^3 $ and using the $ H $-module algebra structure,
      \begin{equation*}
         x \cdot u^3 = (g \cdot u )^2 ( x \cdot u ) + ( g \cdot u ) ( x \cdot u ) ( 1 \cdot u ) + ( x \cdot u ) ( 1 \cdot u)^2 
           = \zeta \gamma u.
      \end{equation*}
      Thus, $ x^2 \cdot u = \zeta \gamma^2 u $.
      On the other hand, $(g^2 - 1) \cdot u = -u - u = -2 u$. 
      Therefore, we must have $\zeta \gamma^2 = -2$, or $\gamma^2 = 2\zeta$.
   \end{proof}
   
Note that if $\zeta$ is a primitive $n^{th}$ root of unity with $\zeta^{n/N} = q$, then $T(n, N, 1)$ is a lifting of $H_n(\zeta, 1, n/N)$: 
\[
   \text{gr}(T(n,N,1)) \cong H_n(\zeta, 1, n/N).
\]
Thus, $T(4,2,1)$ is a lifting of $H_4(\zeta, 1, 2)$ where $\zeta$ is a primitive fourth root of unity.
Now, by Proposition~\ref{prop:minexistence}, there are no $H_4(\zeta, 1, 2)$-module algebra structures on $A(H_4(\zeta, 1, 2))$ as in Notation~\ref{not:ah}, let alone extensions to the double.
Hence, it is a little surprising that there are $T(4,2,1)$-module algebra structures on $A(T(4,2,1))$.

\subsection{The dual $ T(4,2,1)^* $ and the Drinfel'd double $ D(T(4,2,1) ) $ } \label{sect:t421double}

We must now compute the Drinfel'd double of $T(4,2,1)$ so that we can examine the extensions of actions of $T(4,2,1)$ on $A(T(4,2,1))$ to actions of $D(T(4,2,1))$. 
First, we compute a presentation of the dual.
We proceed in a similar fashion to Section~\ref{subsubsect:hnzmtdual}

Let $K$ denote the algebra generated by $G$ and $X$ subject to the relations
\[
   G^4 = 1, \quad X^2 = 0, \quad GX = \zeta XG.
\]
The algebra $K$ is $8$-dimensional with basis $\{X^i G^j\}_{0 \leq i \leq 1, \ 0 \leq j \leq 3}$.
With 
\begin{gather*}
   \Delta(G) = G \tens G - 2XG^3 \tens XG, \quad 
     \Delta(X) = G^2 \tens X + X \tens 1, \\
   \epsilon(G) = 1, \quad 
     \epsilon(X) = 0, \quad 
     S(G) = G^3, \quad 
     S(X) = XG^2,
\end{gather*}
$K$ has the structure of a Hopf algebra.

\begin{proposition} \label{prop:t421dual}
  With $g,x$ denoting the generators of $T(4,2,1)$,  and $G,X$ the generators $K$, the bilinear form defined by
  \begin{equation} \label{eq:t421pairing}
     \langle X^i G^j, x^k g^\ell \rangle = \delta_{i,k} \ \zeta^{j \ell},
  \end{equation}
  is a perfect duality.
  Therefore, $T(4,2,1)^* \cong K$.
\end{proposition}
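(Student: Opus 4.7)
The plan is to mirror the strategy used for Proposition~\ref{prop:hnzmtpairing}: interpret \eqref{eq:t421pairing} as a bilinear form on the PBW bases $\{X^iG^j\}_{0 \leq i \leq 1,\, 0 \leq j \leq 3}$ of $K$ and $\{x^kg^\ell\}_{0 \leq k \leq 1,\, 0 \leq \ell \leq 3}$ of $T(4,2,1)$, verify the five axioms in \eqref{eq:duality} one at a time, and then establish perfectness by an injectivity argument. Since $\dim_\kk K = 8 = \dim_\kk T(4,2,1)$, perfectness will upgrade the duality to the desired isomorphism $T(4,2,1)^* \cong K$.

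First I would verify well-definedness: the form must be compatible with the defining relations of both algebras. The counit identities $\langle 1, x^kg^\ell \rangle = \delta_{0,k} = \epsilon(x^kg^\ell)$ and $\langle X^iG^j, 1 \rangle = \delta_{i,0} = \epsilon(X^iG^j)$ are immediate from \eqref{eq:t421pairing}. Next I would handle the multiplicative axioms by reducing to generators. For $\langle uv, z\rangle = \langle u, \com z 1\rangle \langle v, \com z 2\rangle$, since $T(4,2,1)$ is generated by $g,x$, it suffices to check $z \in \{g, x\}$; using $\Delta(g) = g \tens g$ and $\Delta(x) = g \tens x + x \tens 1$, each check reduces to a small finite computation involving $\langle X^iG^j, g\rangle = \delta_{i,0}\zeta^j$ and $\langle X^iG^j, x\rangle = \delta_{i,1}$.

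The symmetric axiom $\langle uv, z\rangle = \langle u_{(1)}, \com z 1\rangle\cdots$ in the other direction is where the main obstacle lies: it must be consistent with the \emph{nontrivial} comultiplication $\Delta(G) = G \tens G - 2XG^3 \tens XG$ and with the lifting relation $x^2 = g^2 - 1$. The key consistency check is
\[
  \langle G, x^2\rangle ~=~ \langle G, g^2 - 1\rangle ~=~ \zeta^2 - 1,
\]
and on the other side
\[
  \langle \com G 1, x\rangle\langle \com G 2, x\rangle ~=~ \langle G, x\rangle^2 - 2\langle XG^3, x\rangle\langle XG, x\rangle ~=~ -2\zeta^{3}\zeta^{1} ~=~ -2,
\]
which agree precisely because $\zeta^2 = -1$. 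This is the calculation that explains why the correction term $-2XG^3 \tens XG$ appears in $\Delta(G)$. After isolating this case, the remaining generator-level checks (for products $xg$, $gx$, $g^4 = 1$, $gx + xg = 0$, etc., paired against $G$ and $X$) are routine bookkeeping with $\zeta^4 = 1$. The antipode axiom is verified similarly on generators using $S(g) = g^{-1}$, $S(x) = -g^{-1}x$, $S(G) = G^3$, $S(X) = XG^2$, together with $\zeta^2 = -1$.

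Finally, I would prove perfectness along the lines of Proposition~\ref{prop:hnzmtpairing}: let $f = \sum_{i,j}\alpha_{i,j}X^iG^j$ and suppose $\phi(f) = 0$. Evaluating on basis elements $x^kg^\ell$ yields
\[
  0 ~=~ \sum_{j=0}^{3} \alpha_{k,j}\, \zeta^{j\ell} \qquad \text{for all } k \in \{0,1\},\ \ell \in \{0,1,2,3\},
\]
and inverting the Vandermonde system in the primitive fourth root $\zeta$ (as in the final display of the proof of Proposition~\ref{prop:hnzmtpairing}) forces every $\alpha_{k,j} = 0$. Hence $\phi \colon K \to T(4,2,1)^*$ is injective, and equality of dimensions gives the isomorphism $T(4,2,1)^* \cong K$. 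The expected obstacle is purely combinatorial: keeping track of the $\zeta^2 = -1$ contributions coming from the non-grouplike term in $\Delta(G)$ so that the multiplicative and antipode axioms are consistent with the lifting relation $x^2 = g^2 - 1$ rather than $x^2 = 0$.
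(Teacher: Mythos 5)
Your proposal is correct and follows essentially the same route as the paper: verify the duality axioms \eqref{eq:duality} directly (your key consistency check $\langle G, x^2\rangle = \zeta^2 - 1 = -2 = \langle \com G 1, x\rangle\langle \com G 2, x\rangle$ is exactly the instance, hidden in the paper's ``the proof \ldots is similar,'' where the correction term in $\Delta(G)$ meets the lifting relation $x^2 = g^2-1$), and then conclude perfectness by the same Vandermonde inversion in the primitive fourth root $\zeta$. The only blemish is the intermediate values $\langle XG^3,x\rangle$ and $\langle XG,x\rangle$, which by \eqref{eq:t421pairing} both equal $1$ (since $x = x^1g^0$) rather than $\zeta^3$ and $\zeta$; as $\zeta^3\cdot\zeta = 1$ this does not affect the conclusion.
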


In particular, we get that the dual pairing is given on generators by 
\[
  \langle G,g \rangle = \zeta, \quad
    \langle G,x \rangle = 0, \quad
    \langle X,g \rangle = 0, \quad
    \langle X,x \rangle = 1
\]

  \begin{proof}[Proof of Proposition~\ref{prop:t421dual}]
    Note that for $0 \leq i \leq 1$ and $0 \leq j \leq 3$, we have $\Delta(x^i g^j) = \sum_{k=0}^i x^{i-k} g^{j+k} \tens x^k g^j$.    
    Now, on the one hand, we have
    \begin{align*}
       \langle X^a G^b X^c G^d, x^i g^j \rangle &= \zeta^{bc} \langle X^{a+c} G^{b+d}, x^i g^j \rangle = \delta_{a+c, i} \ \zeta^{bc + j(b+d)}.
    \end{align*}
    On the other hand,
    \begin{equation*}
       \langle X^a G^b, \com {(x^i g^j)} 1 \rangle \langle X^c G^d, \com {(x^i g^j)} 2 \rangle 
          = \sum_{k=0}^i \langle X^a G^b, x^{i-k} g^{j+k} \rangle \langle X^c G^d, x^k g^j \rangle \\
          = \delta_{a + c, i} \ \zeta^{b(j+c) + dj}.
    \end{equation*}
    The proof that $\langle X^a G^b, x^i g^j x^k g^\ell \rangle = \langle \com {(X^a G^b)} 1, x^i g^j \rangle \langle \com {(X^a G^b)} 2, x^k g^\ell \rangle$ is similar.
    We also have that
    \[
       \langle 1, x^i g^j \rangle = \delta_{0,i} = \epsilon(x^i g^j) \quad \text{and} \quad \langle X^a G^b, 1 \rangle = \delta_{a,0} = \epsilon(X^a G^b).
    \]
    Finally, recalling that $\zeta$ is a primitive fourth root of unity, we obtain that
    \[
       \begin{array}{r l l l l}
         \langle S(G^b), g^j \rangle
           & = \langle G^{3b}, g^j \rangle 
           & = \zeta^{3jb}
           & = \langle G^b, g^{3j} \rangle
           & = \langle G^b, S(g^j) \rangle, \\
         \langle S(G^b), x g^j \rangle
           & = \langle G^{3b}, x g^j \rangle
           & = 0
           & = \langle G^b, (-1)^j x g^{-1-j} \rangle
           & = \langle G^b, S(x g^j) \rangle, \\
         \langle S(X G^b), g^j \rangle
           & = \langle \zeta^{3b} X G^{3b + 2}, g^j \rangle
           & = 0
           & = \langle X G^b, g^{3j} \rangle
           & = \langle X G^b, S(g^j) \rangle, \\
         \langle S(X G^b), x g^j \rangle 
           & = \langle \zeta^{3b} X G^{3b + 2}, x g^j \rangle
           & = (-1)^j \zeta^{b(-1-j)}
           & = \langle X G^b, (-1)^j x g^{-1-j} \rangle
           & = \langle X G^b, S(x g^j) \rangle.  
       \end{array}
    \]
    Therefore, the bilinear map is in fact a duality.
    We now need to establish it is perfect by showing that $\phi: K \to T(4,2,1)^*$ defined by $\phi(u)(x) = \langle u, x \rangle$ is injective.
    Let $f = \sum_{a = 0}^1 \sum_{b = 0}^3 \alpha_{a,b} X^a  G^b$ with $\alpha_{a,b} \in \kk$ and suppose $\phi(f) = 0$.
    Then for any $i,j$,
    \[
       0 = \phi(f)(x^i g^j) 
          = \langle f, x^i g^j \rangle
          = \sum_{a = 0}^1 \sum_{b = 0}^3 \alpha_{a,b} \langle X^a G^b, x^i g^j \rangle 
          = \sum_{a = 0}^1 \sum_{b = 0}^3 \alpha_{a,b} \delta_{a,i} \zeta^{bj}
          = \sum_{b=0}^3 \alpha_{i,b} \zeta^{bj}.
    \]
    Let $\beta_{i,j}$ denote $\sum_{b=0}^3 \alpha_{i,b} \zeta^{bj}$.
    For any fixed $i$ and $k$,
    \[
       0 = \sum_{j=0}^3 \zeta^{-jk} \beta_{i,j}
          = \sum_{j=0}^3 \zeta^{-jk} \sum_{b=0}^3 \alpha_{i,b} \zeta^{bj}
          = \sum_{b=0}^3 \left( \sum_{j=0}^3 \zeta^{j(b-k)} \right) \alpha_{i,b}
          = 4 \alpha_{i,k}
    \]
    Therefore, since each $\alpha_{i,k} = 0$, we get that $f = 0$, so $\phi$ is injective, and the duality is perfect.
    Thus, $K \cong T(4,2,1)^*$.
  \end{proof}

We can now prove the following result.

\begin{proposition}
   The Drinfel'd double of $D(T(4,2,1))$ of $T(4,2,1)$ is generated by $g, x, G, \text{ and } X$, subject to the relations 
   \begin{gather*}
      G^4 = g^4 = 1, \quad  x^2 = g^2 - 1, \quad X^2 = 0, \quad gx = -xg, \quad GX = \zeta XG \\
      gG = Gg, \quad gX = -Xg, \quad xX - Xx = G^2 - g, \quad xG - \zeta Gx = 2XG(\zeta g - G^2).
   \end{gather*}
   The coalgebra structure is determined by
   \begin{gather*}
      \Delta(g) = g \tens g, \quad 
         \Delta(x) = g \tens x + x \tens 1, \quad
         \Delta(G) = G \tens G - 2XG \tens X G^3, \quad
         \Delta(X) = 1 \tens X + X \tens G^2, \\
      \epsilon(g) = \epsilon(G) = 1, \quad
         \epsilon(x) = \epsilon(X) = 0.
   \end{gather*}
\end{proposition}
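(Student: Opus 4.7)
The plan is to apply the general framework for computing Drinfel'd doubles developed in Section~\ref{subsect:drinfelddouble}, running in parallel with the computation of Section~\ref{subsubsect:hnzmtdouble}. By Lemma~\ref{lem:gens}, $D(T(4,2,1))$ is generated as an algebra by $g, x$ together with the generators $G, X$ of $T(4,2,1)^{*}$, where I use Proposition~\ref{prop:t421dual} to identify the dual with the Hopf algebra $K$. Since the coalgebra structure of $D(H)$ is $H^{* cop} \otimes H$, the comultiplications of $g$ and $x$ are exactly those of $T(4,2,1)$, while the comultiplications of $G$ and $X$ are the opposites of those defined on $K$; the stated formulas for $\Delta$ and $\epsilon$ on the four generators follow immediately.

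For the algebra presentation, the first row of relations in the statement holds because $T(4,2,1)$ and $T(4,2,1)^{* cop}$ both embed as Hopf subalgebras of $D(H)$, so their defining relations are inherited. It remains to derive the four cross relations $gG$, $gX$, $xG$, and $xX$, each by applying the mixed-multiplication formula \eqref{eq:doublemixed}. To set up these calculations I would first record $S^{-1}$ on $T(4,2,1)$ (obtaining $S^{-1}(g) = g^{3}$ and $S^{-1}(x) = g^{3} x$ from the defining relation $gx = -xg$) and then compute the iterated comultiplications $\Delta^{2}$ of each of the four generators. Two reductions make this manageable: $\Delta(G)^{2} = G^{2} \otimes G^{2}$ (so $G^{2}$ is grouplike in $K$) and $\Delta(XG^{3}) = G \otimes XG^{3} + XG^{3} \otimes G^{3}$, both obtained quickly using $X^{2} = 0$ and $\zeta^{2} = -1$.

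Three of the cross relations are essentially routine: $gG = Gg$ and $gX = -Xg$ involve only a handful of nonzero pairings, most of which vanish by the orthogonality $\delta_{i,k}$ in the dual pairing of Proposition~\ref{prop:t421dual}, while $xX - Xx = G^{2} - g$ follows from summing the nine Sweedler contributions coming from $\Delta^{2}(x)$ and $\Delta^{2}(X)$. The main obstacle will be the relation governing $xG$, since $G$ is not grouplike in $K$: the full $\Delta^{2}(G)$ decomposes into four summands, and one must carefully collect the surviving contributions (using, for instance, $\langle XG^{3}, g^{3} x \rangle = \zeta^{3}$ and $\langle XG, x \rangle = 1$) before assembling them into the asserted commutation relation. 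Finally, a dimension count shows that the algebra defined by the stated generators and relations has dimension at most $\dim T(4,2,1)^{*} \cdot \dim T(4,2,1) = 64 = \dim D(T(4,2,1))$, and since it surjects onto $D(T(4,2,1))$, equality forces the presentation to be complete.
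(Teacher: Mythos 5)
Your proposal follows essentially the same route as the paper's proof: invoke Lemma~\ref{lem:gens} and Proposition~\ref{prop:t421dual} for the generators, the first row of relations, and the (co)algebra structure, record $S^{-1}(g)=g^3$, $S^{-1}(x)=g^3x$ and the iterated coproducts $\Delta^2$ of the four generators (your observations that $G^2$ is grouplike in $K$ and that $\Delta(XG^3)=G\otimes XG^3+XG^3\otimes G^3$ are exactly what produce the four-term expression for $\Delta^2(G)$ used there), and then grind out the four cross relations from \eqref{eq:doublemixed} and the pairing \eqref{eq:t421pairing}. The only addition is your closing dimension count certifying that the presentation is complete, which the paper leaves implicit; the computations you defer as ``routine'' are precisely the ones the paper writes out term by term.
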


   \begin{proof}
      The generators and top row of relations follows from Lemma~\ref{lem:gens} and Proposition~\ref{prop:t421dual}.
      For the rest, first note that in $K$ and $T(4,2,1)$, we have
      \begin{gather*}
         \Delta^2(G) = G \tens G \tens G -2 G \tens XG^3 \tens XG - 2 XG^3 \tens XG \tens G - 2 XG^3 \tens G^3 \tens XG, \\
         \Delta^2(X) = G^2 \tens G^2 \tens X + G^2 \tens X \tens \epsilon + X \tens \epsilon \tens \epsilon , \\
         \Delta^2(g) = g \tens g \tens g , \quad
         \Delta^2(x) = g \tens g \tens x + g \tens x \tens 1 + x \tens 1 \tens 1 , \\
         S^{-1}(g) = g^{-1} = g^3, \quad
         S^{-1}(x) = - x g^3 = g^3 x.
      \end{gather*}
      Thus, using \eqref{eq:doublemixed} and \eqref{eq:t421pairing}, we have the following computations:
      \begin{align*}
         g G &= \langle G, g^3 \rangle \langle G, g \rangle G g %
            - 2 \langle G, g^3 \rangle \langle XG, g \rangle XG^3 g %
            - 2 \langle XG^3, g^3 \rangle \langle G, g \rangle XG g %
            - 2 \langle XG^3, g^3 \rangle \langle XG, g \rangle G^3 g   
         = Gg,
       \end{align*}
       \begin{align*}
         g X &= \langle G^2, g^3 \rangle \langle X, g \rangle G^2 g 
            + \langle G^2, g^3 \rangle \langle \epsilon, g \rangle X g 
            + \langle X, g^3 \rangle \langle \epsilon, g \rangle g  
         = \zeta^2 Xg 
         = - Xg,
      \end{align*}
      \begin{alignat*}{3}
         x X &= \langle G^2, g^3 x \rangle \langle X, g \rangle G^2 g %
            &&+ \langle G^2, g^3 x \rangle \langle \epsilon, g \rangle X g %
            &&+ \langle X, g^3x \rangle \langle \epsilon, g \rangle g    \\
         &\hspace{3mm} + \langle G^2, 1 \rangle \langle X, g \rangle G^2 x %
            &&+ \langle G^2, 1 \rangle \langle \epsilon, g \rangle X x %
            &&+ \langle X, 1 \rangle \langle \epsilon, g \rangle \epsilon x    \\
         &\hspace{3mm} + \langle G^2, 1 \rangle \langle X, x \rangle G^2 %
            &&+ \langle G^2, 1 \rangle \langle \epsilon, x \rangle X %
            &&+ \langle X, 1 \rangle \langle \epsilon, x \rangle 1     \\
         &= -g + Xx + G^2,
       \end{alignat*}
       \begin{alignat*}{4}
         x G &= \langle G, g^3 x \rangle \langle G, g \rangle G g %
            &&-2 \langle G, g^3 x \rangle \langle XG, g \rangle XG^3 g %
            &&-2 \langle XG^3, g^3x \rangle \langle G, g \rangle XG g %
            &&-2 \langle XG^3, g^3 x \rangle \langle XG, g \rangle G^3 g   \\
         &\hspace{3mm} + \langle G, 1 \rangle \langle G, g \rangle G x %
            &&-2 \langle G, 1 \rangle \langle XG, g \rangle XG^3 x %
            &&-2 \langle XG^3, 1 \rangle \langle G, g \rangle XG x %
            &&-2 \langle XG^3, 1 \rangle \langle XG, g \rangle G^3 x   \\
         &\hspace{3mm} + \langle G, 1 \rangle \langle G, x \rangle G^3 %
            &&-2 \langle G, 1 \rangle \langle XG, x \rangle XG^3 %
            &&-2 \langle XG^3, 1 \rangle \langle G, x \rangle XG %
            &&-2 \langle XG^3, 1 \rangle \langle XG, x \rangle G^3  \\
         &= -2 XGg + \zeta Gx - 2&&XG^3.
      \end{alignat*}
      
      \vspace{-.2in}
      
   \end{proof}

\subsection{(The lack of) extensions to $ D(T(4,2,1))$}

We now come to the surprising result that the $ T(4,2,1) $-module algebra structures computed in Section~\ref{sect:t421actions} are not $ D(T(4,2,1)) $-module algebras.

\begin{proposition} \label{prop:t421extensions}
   The action of $ T(4,2,1) $ on $ A(T(4,2,1)) $ cannot extend to an action of $ D(T(4,2,1)) $ on $ A $ in any way to make $ A $ a $ D(T(4,2,1))$-module algebra.
\end{proposition}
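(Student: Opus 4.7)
The plan is to suppose an extension exists and derive a contradiction using just a handful of the relations of $D(T(4,2,1))$ evaluated at $u$. First, using $gG = Gg$ and $gX = -Xg$ together with the eigenspace decomposition $A = \bigoplus_{i=0}^3 A_i$ with $A_i = \kk u^i$ from Remark~\ref{rem:astructure}, one observes that $G \cdot u$ lies in the $\zeta$-eigenspace $A_1 = \kk u$, while $X \cdot u$ lies in the $\zeta^3$-eigenspace $A_3 = \kk u^3$. The relation $G^4 = 1$ forces $G \cdot u = \alpha u$ for some fourth root of unity $\alpha$; write $\epsilon \defeq \alpha^2 \in \{\pm 1\}$, so that $G^2 \cdot u = \epsilon u$. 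Similarly, write $X \cdot u = \delta u^3$ for some scalar $\delta \in \kk$.

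The heart of the argument is the clash between two relations in $D(T(4,2,1))$. Using $\Delta(X) = 1 \tens X + X \tens G^2$ and iterating (with $u^4 = 1_A$), one computes
\[
  X \cdot u^3 ~=~ \delta(1 + \epsilon + \epsilon^2)\, u ~=~ \delta(2 + \epsilon)\, u,
\]
where $2 + \epsilon \in \{1, 3\}$ is nonzero. Applying $X^2 = 0$ to $u$ then yields $\delta^2 (2 + \epsilon)\, u = 0$, forcing $\delta = 0$. On the other hand, applying $xX - Xx = G^2 - g$ to $u$ and using $x \cdot u^3 = \zeta \gamma u$ from the proof of Proposition~\ref{prop:at421}, the left-hand side becomes $\gamma \delta (\zeta - 2 - \epsilon)\, u$ while the right-hand side becomes $(\epsilon - \zeta)\, u$. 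With $\delta = 0$ this would require $\epsilon = \zeta$, which is impossible since $\epsilon \in \{\pm 1\}$ is real but $\zeta \in \{\pm i\}$ is imaginary (recall that inner-faithfulness forces $\zeta$ to be a primitive fourth root of unity, cf.\ Corollary~\ref{cor:t421innerfaithful}).

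The main subtlety to keep track of is the computation of $X \cdot u^k$: in $D(T(4,2,1))$ the element $X$ is no longer $(g,1)$-skew primitive as in $T(4,2,1)^{*}$, but rather $(1, G^2)$-skew primitive, so the twist by $G^2$ is essential to the iterative formula. The more intricate relation $xG - \zeta Gx = 2XG(\zeta g - G^2)$ is not needed for the contradiction; the two chosen relations $X^2 = 0$ and $xX - Xx = G^2 - g$ are by themselves already incompatible with any candidate extension.
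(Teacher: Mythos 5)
Your proposal is correct and follows essentially the same route as the paper's own proof: the same eigenspace argument pins down $G \cdot u = \alpha u$ and $X \cdot u = \delta u^3$, the same computation of $X \cdot u^3$ via the $(1,G^2)$-skew primitivity of $X$ combined with $X^2 = 0$ forces $\delta = 0$, and the same evaluation of $xX - Xx = G^2 - g$ on $u$ yields the impossible equality $\alpha^2 = \zeta$. The only cosmetic difference is that you compute $(xX - Xx)\cdot u$ in general before specializing to $\delta = 0$, and you make explicit (correctly) that inner-faithfulness forces $\zeta$ to be primitive.
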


   \begin{proof}
      Suppose by contradiction that we have such an extension. 
      By Proposition~\ref{prop:at421}, we have that \linebreak ${ A = \kk[u] / (u^4 - 1) }$, with $ g \cdot u = \zeta u $, and $ x \cdot u = \gamma u^3 $, where $\gamma^2 = 2\zeta $. 
      By the relation $ gX = - Xg $, we have that
      \[
         g \cdot X \cdot u = - X \cdot g \cdot u = - \zeta X \cdot u = \zeta^3 X \cdot u.
      \]
      Therefore, $ X \cdot u \in A_3 = \kk u^3 $, so $ X \cdot u = \delta u^3 $ for some $ \delta \in \kk $.
      Similarly, by the relation $ gG = Gg $ of $D(T(4,2,1))$, we have $g \cdot G \cdot u = G \cdot g \cdot u = \zeta G \cdot u$, so $ G \cdot u \in A_1 =\kk u $. 
      Therefore, $ G \cdot u = \eta u $ for some $ \eta \in \kk $. 
      Since $ G^4 = 1 $, $ \eta = \zeta^i $ for some integer $ i $. 
      
      In $ D(T(4,2,1)) $, we have 
      \[
         \Delta^2 ( X ) = X \tens G^2 \tens G^2 + 1 \tens X \tens G^2 + 1 \tens 1 \tens X.
      \]
      Thus, 
      \begin{align*}
         X \cdot u^3 &= (X \cdot u)( G^2 \cdot u )^2  %
            + u ( X \cdot u ) (G^2 \cdot u )  %
            + u^2 (X \cdot u ) \\
         &= (\delta u^3)(\eta^4 u^2) %
            + u (\delta u^3)(\eta^2 u) %
            + u^2 (\delta u^3) 
          = (\eta^2 + 2) \delta u.
      \end{align*}
      Using this calculation we have $0 = X^2 \cdot u = \delta X \cdot u^3 = (\eta^2 + 2) \delta^2 u $.
      Since $\eta$ is a fourth root of unity, $\eta^2 \neq -2$, so we must have $\delta = 0$.
      Therefore, $X$ acts by zero.
      
      Hence, on one hand, $(xX - Xx) \cdot u = 0$.
      On the other hand, since $xX - Xx = G^2 - g$, we have 
      \[
         (xX - Xx) \cdot u = G^2 \cdot u - g \cdot u = (\eta^2 - \zeta) u.
      \]
      Thus, since $\eta$ is a power of the fourth root of unity $\zeta$, we arrive at a contradiction: $\eta^2 = \zeta$.
   \end{proof}

\section{The Frobenius-Lusztig kernel, $u_q(\mf{sl}_2)$} \label{sect:uqsl2}

The next algebra we study is the Frobenius-Lusztig kernel, $u_q(\mf{sl}_2)$.
It is well-known that $u_q(\mf{sl}_2)$ contains two isomorphic copies of Taft algebras, which generate the whole algebra.
Let $q$ be a primitive $n^{th}$ root of unity, with $n$ odd.
Recall that $u_q(\mf{sl}_2)$ is generated by grouplike $K$, a $(1,K)$-skew primitive $E$, and a $(K^{-1},1)$-skew primitive $F$, subject to the relations:
\[
   K^n = 1, \quad E^n = F^n = 0, \quad KE = q^2 EK, \quad KF = q^{-2} FK, \quad [E,F] = \frac{K - K^{-1}}{q - q^{-1}}.
\]
In a sense, the Taft algebra $T_n(q)$ is like a Borel subalgebra of $u_q(\mf{sl}_2)$.
More precisely, with the decomposition, $T_n(q) \cong \mf B(V) \#\kk \Gamma $ as at the beginning of Section~\ref{sect:taft}, $\mf B(V) \cong u_q^+(\mf{sl}_2)$ (\cite[Theorem~4.3]{as-pha}).
For more on $u_q(\mf{sl}_2)$, and the computation of its dual and Drinfel'd double, see Appendix \ref{sect:uqdouble}.

To help us determine when an action of $u_q(\mf{sl}_2)$ is inner-faithful, we have the following:

\begin{proposition} \label{prop:uqprims}
   Let $0 \leq b < n$. Then
   \[
      P_{K^b, 1}(u_q(\mf{sl}_2)) = \begin{cases}
         \kk (K^{-1} - 1) + \kk F + \kk EK^{-1}, & \text{ if }b \equiv -1 \mod n \\
         \kk (K^b  -1), & \text{ otherwise}.
      \end{cases}
   \]
\end{proposition}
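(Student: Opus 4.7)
The plan is to verify the inclusion $\supseteq$ by direct computation of $\Delta$ on each claimed generator, and then prove the reverse inclusion by restricting to the first piece $H_1$ of the coradical filtration and matching coefficients in a PBW basis of $H \tens H$. For $\supseteq$, one has $\Delta(K^b - 1) = K^b \tens (K^b - 1) + (K^b - 1) \tens 1$; the element $F$ is $(K^{-1}, 1)$-skew primitive by definition; and $\Delta(EK^{-1}) = (1 \tens E + E \tens K)(K^{-1} \tens K^{-1}) = K^{-1} \tens EK^{-1} + EK^{-1} \tens 1$, so $EK^{-1} \in P_{K^{-1}, 1}$.

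For the reverse inclusion, let $\Phi \in P_{K^b, 1}(u_q(\mf{sl}_2))$. Since $K^b, 1 \in H_0$, we have $\Delta(\Phi) \in H_0 \tens H + H \tens H_0$, so $\Phi \in H_1$. Because $u_q(\mf{sl}_2) \cong \mf B(V) \# \kk G$ is a coradically graded bosonization with $V = \kk E \oplus \kk F$, we have $H_1 = \kk G \oplus E \kk G \oplus F \kk G$, so we may write
\[
   \Phi = \sum_{\ell = 0}^{n-1} \left( \alpha_\ell K^\ell + \beta_\ell EK^\ell + \gamma_\ell FK^\ell \right).
\]
Using the formulas $\Delta(EK^\ell) = K^\ell \tens EK^\ell + EK^\ell \tens K^{\ell+1}$ and $\Delta(FK^\ell) = K^{\ell-1} \tens FK^\ell + FK^\ell \tens K^\ell$, I would equate $\Delta(\Phi)$ with $K^b \tens \Phi + \Phi \tens 1$ and compare PBW coefficients in $H \tens H$. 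Matching the coefficients of $K^p \tens EK^{p'}$ and of $EK^p \tens K^{p'}$ shows that $\beta_\ell \ne 0$ forces both $\ell = b$ and $\ell \equiv -1 \pmod n$, so only $\beta_{-1} EK^{-1}$ can occur, and only when $b \equiv -1 \pmod n$. The analogous matching for the $F$-part shows $\gamma_\ell \ne 0$ forces $\ell = 0$ and $b \equiv -1 \pmod n$, allowing only the contribution $\gamma_0 F$. Finally, the grouplike part $\sum_\ell \alpha_\ell K^\ell$ must itself be $(K^b, 1)$-skew primitive in $\kk G$, and hence a scalar multiple of $K^b - 1$ since $G$ is cyclic.

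The main obstacle I anticipate is justifying the explicit form of $H_1$ in the first place. This can be invoked from the realization of $u_q(\mf{sl}_2)$ as a bosonization of a Nichols algebra (cf.\ Section~\ref{sect:taft} and \cite{as-pha}); alternatively, a self-contained argument would expand $\Phi$ in the full PBW basis $\{E^i F^j K^\ell\}_{0 \le i,j,\ell < n}$ and show that any monomial with $i + j \geq 2$ produces ``mixed-bidegree'' tensors $E^{i'}F^{j'} \tens E^{i''}F^{j''}$ in $\Delta(\Phi)$ with $0 < i' + j' < i + j$, which cannot be matched on the right-hand side $K^b \tens \Phi + \Phi \tens 1$. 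Since $n$ is odd, $q^2$ is again a primitive $n^{th}$ root of unity, so the relevant $q^2$-binomial coefficients $\binom{i+j}{i}_{q^2}$ appearing as coefficients of the mixed terms are all nonzero for $1 \leq i \leq i+j-1 < n$, and this forces every higher-bidegree coefficient of $\Phi$ to vanish, reducing to the case treated above.
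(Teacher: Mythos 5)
Your fallback argument --- expand $\Phi$ in the PBW basis $\{E^i F^j K^\ell\}$, apply $\Delta$, and match coefficients in $H \tens H$ --- is essentially the paper's own proof (the paper uses the basis $\{K^\ell \hat E^i F^j\}$ with $\hat E = EK^{-1}$, first compares $\blank \tens 1$ coefficients to kill the $K$-dependence, then compares $\blank \tens \hat E$ and $\blank \tens F$ coefficients), and it is the route you should lead with, because the justification for your primary route is false as stated. The Hopf algebra $u_q(\mf{sl}_2)$ is \emph{not} a coradically graded bosonization: the relation $EF - FE = \frac{K - K^{-1}}{q - q^{-1}}$ is a nontrivial lifting relation, so $u_q(\mf{sl}_2) \not\cong \mf B(V) \# \kk G$ as Hopf algebras --- only $\textup{gr}(u_q(\mf{sl}_2))$ is such a bosonization. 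The identity $H_1 = \kk G \oplus E \kk G \oplus F \kk G$ that you actually need is still true (for instance because $\dim_\kk H_1 = \dim_\kk (\textup{gr}\,H)_1 = 3n$ and the $3n$ listed elements visibly lie in $H_1$ and are linearly independent), but invoking it via a nonexistent isomorphism is a genuine error, and deducing the skew-primitives from $H_1$ while $H_1$ is usually computed from the skew-primitives (Taft--Wilson) skirts circularity unless you make the dimension comparison with $\textup{gr}\,H$ explicit.

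Two points to tighten in the PBW route. First, the reason $\Delta(E^i F^j K^\ell)$ produces \emph{only} tensors of the form $E^{i-s} F^{j-t} K^{\ell - t} \tens E^s F^t K^{\ell + i - s}$ (up to nonzero scalars), with no spurious lower-order contributions from the lifting relation, is that in the expansion of $\Delta(E)^i \Delta(F)^j \Delta(K)^\ell$ each tensor leg is already in PBW order --- one never has to commute an $E$ past an $F$ inside a single leg. This is exactly the step where the failure of coradical gradedness could have bitten, so it deserves a sentence. Second, to conclude that $\alpha_{i,j,\ell} = 0$ whenever $i + j \geq 2$, you should observe that distinct basis monomials contribute to disjoint sets of tensor monomials (the source monomial is recovered from a target $E^a F^b K^c \tens E^d F^e K^f$ via $i = a+d$, $j = b+e$, $\ell = c+e$), so the mixed term you exhibit cannot be cancelled by another summand of $\Phi$. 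Finally, the coefficients of the mixed terms are of the form ${\binom i s}_{q^2} {\binom j t}_{q^{-2}}$ times a power of $q$, not ${\binom{i+j}{i}}_{q^2}$; your nonvanishing claim is nonetheless correct since $q^2$ is a primitive $n^{th}$ root of unity. With these repairs the argument is complete and coincides with the paper's.
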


   \begin{proof}
      For convenience, we let $\hat E = EK^{-1}$, and note that $u_q(\mf{sl}_2)$ is generated by $\hat E, F, K$ and that \linebreak $\{K^\ell \hat E^i F^j\}_{0 \leq \ell,i,j \leq n-1}$ is a basis.
      Using $q$-binomial coefficients and \eqref{eq:skewbinom}, one sees that 
      \begin{equation} \label{eq:uqdeltaonbasis}
         \Delta(K^\ell \hat E^i F^j) = \sum_{s=0}^i \sum_{t=0}^j {\binom i s}_{q^2} {\binom j t}_{q^{-2}} q^{2t(i-s)} K^{\ell - s - t} \hat E^{i-s} F^{j-t} \tens K^\ell \hat E^s F^t.
      \end{equation}
      Now, fix an element of $P_{K^b, 1}$, 
      \[
        \Phi = \sum_{\ell, i, j = 0}^{n-1} \alpha_{\ell,i,j} K^\ell \hat E^i F^j.
      \]
      On one hand, applying \eqref{eq:uqdeltaonbasis}, we have 
      \begin{align*}
         \Delta(\Phi) 
            &= \sum_{\ell,i,j=0}^{n-1} \sum_{s=0}^i \sum_{t=0}^j \alpha_{\ell, i, j} {\binom i s}_{q^2} {\binom j t}_{q^{-2}} q^{2t(i-s)} K^{\ell - s - t} \hat E^{i-s} F^{j-t} \tens K^\ell \hat E^s F^t \\
            &= \sum_{\ell,s,t = 0}^{n-1} \left( \sum_{i=s}^{n-1} \sum_{j=t}^{n-1} \alpha_{\ell, i, j}{\binom i s}_{q^2} {\binom j t}_{q^{-2}} q^{2t(i-s)} K^{\ell - s - t} \hat E^{i-s} F^{j-t} \right) \tens K^\ell \hat E^s F^t.
      \end{align*}
      It is worth mentioning that since $q^2$ is a primitive $n^{th}$ root of unity, none of the binomial coefficients here will vanish.
      On the other hand, since $\Phi \in P_{K^b,1}(u_q(\mf{sl}_2))$, we have $\Delta (\Phi) = K^b \tens \Phi + \Phi \tens 1$.
      By comparing the coefficients of the $\blank \tens 1$ terms, we must have that
      \begin{equation} \label{eq:ysimp1}
         \sum_{i=0}^{n-1} \sum_{j=0}^{n-1} \alpha_{0, i, j} \hat E^{i} F^{j} = \Phi + \alpha_{0,0,0} K^b.
      \end{equation}
      Therefore, isolating $\Phi$ in \eqref{eq:ysimp1} and applying $\Delta$ with the use of \eqref{eq:uqdeltaonbasis}, we get
      \begin{equation} \label{eq:deltay2}
         \Delta(\Phi) = \sum_{s,t = 0}^{n-1} \left( \sum_{i=s}^{n-1} \sum_{j=t}^{n-1} \alpha_{0, i, j}{\binom i s}_{q^2} {\binom j t}_{q^{-2}} q^{2t(i-s)} K^{- s - t} \hat E^{i-s} F^{j-t} \right) \tens \hat E^s F^t - \alpha_{0,0,0} K^b \tens K^b.
      \end{equation}
      By comparing the coefficients of the $\blank \tens \hat E$ terms of \eqref{eq:deltay2} and $\Delta(\Phi) = K^b \tens \Phi + \Phi \tens 1$, we must have
      \begin{equation} \label{eq:Eterms}
         \sum_{i=1}^{n-1} \sum_{j=0}^{n-1} \alpha_{0, i, j}{\binom i 1}_{q^2} K^{- 1} \hat E^{i-1} F^{j} = \alpha_{0,1,0} K^b.
      \end{equation}
      Thus, if $i>1$ and $j \geq 0$ or if $i \geq 1$ and $j > 0$, $\alpha_{0,i,j} = 0$.
      Similarly, by comparing the $\blank \tens F$ terms, we must have
      \begin{equation} \label{eq:Fterms}
         \sum_{i=0}^{n-1} \sum_{j=1}^{n-1} \alpha_{0, i, j}{\binom j 1}_{q^{-2}} q^{2i} K^{- 1} \hat E^{i} F^{j-1} = \alpha_{0,0,1} K^b.
      \end{equation}
      Thus, if $j>1$ and $i \geq 0$ or if $j \geq 1$ and $i > 0$, $\alpha_{0,i,j} = 0$.
      Therefore, we have 
      \[
         \Phi = \alpha_{0,0,0}(1 - K^b) + \alpha_{0,0,1} F + \alpha_{0,1,0} \hat E.
      \]
      Moreover, one sees from \eqref{eq:Eterms} and \eqref{eq:Fterms} that if $b \not \equiv -1 \mod n$, then all $\alpha_{0,i,j} = 0$, except when $i = j = 0$.
      We already know that $F$ is $(K^{-1},1)$-skew primitive, and it is not hard to see that $\hat E$ is as well.
   \end{proof}
   
The following is a direct result of Corollary~\ref{cor:ifprims} and Proposition~\ref{prop:uqprims}.
   
\begin{corollary} \label{cor:uqinnerfaithful}
   A $u_q(\mf{sl}_2)$-module algebra is inner-faithful if and only if $G(H)$ acts faithfully, and if no nonzero element of $\kk(1 - K^{-1}) + \kk F + \kk EK^{-1}$ acts by zero.
   \qed
\end{corollary}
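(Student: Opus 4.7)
The plan is to apply Corollary~\ref{cor:ifprims} directly, and then use Proposition~\ref{prop:uqprims} to enumerate all spaces $P_{g,1}(u_q(\mf{sl}_2))$ for $g \in G(u_q(\mf{sl}_2)) = \langle K \rangle$. Corollary~\ref{cor:ifprims} reduces inner-faithfulness of the action to the single condition that every nonzero $(K^b,1)$-skew primitive element (for every $0 \le b < n$) acts by nonzero on $A$. Since the Hopf algebra $u_q(\mf{sl}_2)$ is pointed (being generated by a grouplike and skew-primitive elements), this hypothesis of Corollary~\ref{cor:ifprims} is satisfied.

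First I would split into the two cases given by Proposition~\ref{prop:uqprims}. For $b \not\equiv -1 \pmod n$, one has $P_{K^b,1}(u_q(\mf{sl}_2)) = \kk(K^b - 1)$. The $b = 0$ case is vacuous, so the remaining nontrivial content is that for each $0 < b < n$ with $b \neq n-1$, the element $K^b - 1$ must act by a nonzero operator on $A$; equivalently, $K$ does not act as an $n'$-th root of unity for any proper divisor $n' \mid n$ with $n' \neq 1$. Together this is exactly the statement that $\langle K \rangle = G(u_q(\mf{sl}_2))$ acts faithfully on $A$, modulo the condition imposed separately by the case $b = n - 1$.

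Next I would handle the remaining case $b \equiv -1 \pmod n$. Here Proposition~\ref{prop:uqprims} gives $P_{K^{-1},1}(u_q(\mf{sl}_2)) = \kk(K^{-1} - 1) + \kk F + \kk E K^{-1}$, and the condition from Corollary~\ref{cor:ifprims} becomes exactly that no nonzero element of this three-dimensional space acts by zero on $A$. Note that the requirement that $K^{-1} - 1$ alone act nontrivially is already subsumed by the faithfulness of the $\langle K \rangle$-action, so the net additional content of the $b = n-1$ case is precisely the condition stated in the corollary.

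Combining both cases yields the equivalence asserted. There is no real obstacle here; the only point that deserves explicit mention is that when collecting the conditions across $b$, the requirement from $b = n - 1$ already implies the $b = 0$ requirement, so the two bullet conditions of the corollary (faithfulness of $\langle K \rangle$ and no nonzero element of $\kk(1 - K^{-1}) + \kk F + \kk EK^{-1}$ acting as zero) together precisely exhaust the hypotheses of Corollary~\ref{cor:ifprims}.
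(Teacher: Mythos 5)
Your proposal is correct and follows exactly the route the paper intends: the paper states this corollary as ``a direct result of Corollary~\ref{cor:ifprims} and Proposition~\ref{prop:uqprims},'' and your argument simply fleshes out that combination, correctly noting that the $b=0$ case is vacuous and that the conditions for $b \not\equiv -1$ together with the $K^{-1}-1$ part of the $b \equiv -1$ case amount to faithfulness of the $\langle K\rangle$-action. No gaps.
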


\subsection{The structure of $A(u_q(\mf{sl}_2))$ and extensions to $D(u_q(\mf{sl}_2))$}

We now consider $u_q(\mf{sl}_2)$-module algebra structures on $A(u_q(\mf{sl}_2))$ as in Notation~\ref{not:ah}.
By definition, $A = \kk[u]/(u^n - 1)$.
To see the possible module structures of $A$, we use the following result of Montgomery and Schneider.
The original statement was for $q$ a primitive $2n^{th}$ root of unity.
However, their proof is also valid for the case we are interested in, since it only relies on the fact that $q^2$ is a primitive $n^{th}$ root of unity so that $ H_1 =  \kk \langle K^{-1}, F \rangle \cong T_{n} ( q^{-2} ) $ and $ H_2 = \kk \langle K^{-1}, E K^{-1} \rangle \cong T_{n} ( q^2 ) $.

\begin{proposition}[{\cite[Corollary~3.2]{montschneid}}] \label{prop:montuq}
   Let $ A $ be an $ n $-dimensional $\kk$-algebra with no non-zero nilpotent elements, and assume that $ A $ is a $ u_q(\mf{sl}_2) $-module algebra such that $ F \cdot A \neq 0 $ (or that $ E \cdot A \neq 0 $). 
   Then there exists $ u \in A $ and $\beta, \gamma, \delta \in \kk $, all nonzero, such that 
   \begin{enum}
      
      \item
         $ A = \kk(u), \ u^n = \beta, \text{ and } K \cdot u = q^2 u $;
         
      \item
         $ F \cdot u = \gamma 1 \text{ and } E \cdot u = \delta u^2 $;
         
      \item
         $ \gamma \delta = -q$.

   \end{enum}
   Moreover $ u $ is unique up to a scalar multiple.  
   \qed
\end{proposition}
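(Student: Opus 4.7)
The plan is to bootstrap from the Taft case (Theorem~\ref{thm:taft}) applied to the ``positive Borel'' subalgebra $H_1 = \kk\langle K^{-1}, F\rangle \cong T_n(q^{-2})$ of $u_q(\mf{sl}_2)$, and then to pin down the $E$-action via the quantum Serre relation $[E,F] = (K - K^{-1})/(q - q^{-1})$.

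First, assume $F \cdot A \neq 0$. Then by Corollary~\ref{cor:ifprims} the restricted $H_1$-module structure on $A$ is inner-faithful, since $F$ is a nontrivial $(K^{-1},1)$-skew primitive. Because $\dim_\kk A = n$ and $A$ has no nonzero nilpotents, Theorem~\ref{thm:taft} supplies a generator $u \in A$ and nonzero scalars $\beta, \gamma$ such that $A = \kk[u]/(u^n - \beta)$, $K \cdot u = q^2 u$, and $F \cdot u = \gamma 1_A$. This establishes (a) and the first half of (b). The symmetric hypothesis $E \cdot A \neq 0$ is handled by the same argument applied instead to $H_2 = \kk\langle K^{-1}, EK^{-1}\rangle \cong T_n(q^2)$.

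Next, I would determine the $E$-action. The relation $KE = q^2 EK$ applied to $u$ gives $K \cdot (E \cdot u) = q^4(E \cdot u)$, so $E \cdot u$ lies in the $q^4$-eigenspace of $K$, which by Remark~\ref{rem:astructure} is $\kk u^2$. Thus $E \cdot u = \delta u^2$ for some $\delta \in \kk$. To obtain (c) and simultaneously force $\delta \neq 0$, apply the commutator relation to $u$. The right-hand side evaluates to $\tfrac{q^2 - q^{-2}}{q - q^{-1}}\, u = (q + q^{-1})\, u$. On the left, $E \cdot F \cdot u = E \cdot \gamma 1_A = 0$, while using $\Delta(F) = K^{-1} \tens F + F \tens 1$ and the module-algebra axiom one computes $F \cdot u^2 = \gamma(1 + q^{-2})\, u$, so the bracket equals $-\delta \gamma (1 + q^{-2})\, u$. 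Since $1 + q^{-2} = q^{-1}(q + q^{-1})$ and $q + q^{-1} \neq 0$ (as $q^2 \neq -1$ for $n$ odd), cancelling yields $\gamma \delta = -q$.

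Uniqueness of $u$ up to a scalar is immediate: any other valid generator must satisfy $K \cdot \tilde u = q^2 \tilde u$, and by (a) the $q^2$-eigenspace of $K$ on $A$ is the one-dimensional space $\kk u$. I do not expect a serious obstacle here; the only thing requiring care is tracking the root-of-unity conventions between the two Borel identifications $H_1 \cong T_n(q^{-2})$ and $H_2 \cong T_n(q^2)$ so that the constant in (c) comes out consistently as $-q$.
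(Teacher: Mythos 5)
Your argument is, in substance, the one the paper relies on: the paper offers no proof of Proposition~\ref{prop:montuq} beyond citing Montgomery--Schneider and observing that $\kk\langle K^{-1},F\rangle$ and $\kk\langle K^{-1},EK^{-1}\rangle$ are Taft algebras, and your route --- classify the action of one Borel via Theorem~\ref{thm:taft}, then pin down $E$ using the $K$-eigenspace decomposition and the commutator relation --- is exactly that strategy, with the second application of the Taft classification replaced by the (cleaner) commutator computation. One step needs tightening. Corollary~\ref{cor:ifprims} is an ``if and only if over \emph{all} skew-primitives,'' so knowing only that the single element $F$ acts nontrivially does not yet give inner-faithfulness of the $H_1$-action: you must also rule out the skew-primitives $K^{-b}-1$ (i.e.\ show that $\langle K\rangle$ acts faithfully on $A$) and the combinations $\alpha F+\beta(K^{-1}-1)$. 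Both do follow from $F\cdot A\neq 0$: if $a$ is a $K$-eigenvector with $F\cdot a\neq 0$, then $K\cdot(F\cdot a)=q^{-2}F\cdot(K\cdot a)$ shows $F\cdot a$ is an eigenvector whose eigenvalue differs from that of $a$ by the factor $q^{-2}$, which has order $n$, so $K$ acts with full order $n$; and since $F$ shifts eigenspaces while $K^{-1}-1$ preserves them, no combination $\alpha F+\beta(K^{-1}-1)$ with $\alpha\neq 0$ can annihilate $A$. This is precisely the argument of Corollary~\ref{cor:hnzmtinnerfaithful}, which is the statement to invoke here rather than Corollary~\ref{cor:ifprims} alone. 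The remaining steps --- $E\cdot u=\delta u^2$ from $KE=q^2EK$ and Remark~\ref{rem:astructure}, the identity $\gamma\delta=-q$ from the commutator using $q+q^{-1}\neq 0$ for $n$ odd (which also forces $\delta\neq 0$), the equivalence of the hypotheses $E\cdot A\neq 0$ and $F\cdot A\neq 0$, and uniqueness of $u$ from one-dimensionality of the $q^2$-eigenspace --- are all correct, and you are right that the only other delicate point is the $q^{\pm2}$ bookkeeping in identifying the Borel subalgebra with a Taft algebra.
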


We point out here that by Corollary~\ref{cor:uqinnerfaithful}, the assumption that $ F \cdot A \neq 0 $ or $E \cdot A \neq 0$ is necessary for the action to be inner-faithful, and that the actions on $A$ described are in fact inner-faithful, because no nonzero element of $\kk (1-K^{-1}) + \kk F + \kk EK^{-1}$ acts by zero.
Therefore, by scaling $u$, Proposition~\ref{prop:montuq} classifies the $u_q(\mf{sl}_2)$-module algebra structures on $A(u_q(\mf{sl}_2))$ as in Notation~\ref{not:ah}.
It turns out that the action of $u_q(\mf{sl}_2)$ on $A$ extends to an action of $D(u_q(\mf{sl}_2))$ in two distinct ways. 

In Appendix~\ref{sect:uqdouble}, we compute an algebra presentation of $D(u_q(\mf{sl}_2))$.
(See Theorem~\ref{thm:uqdouble} for the presentation.)
Therefore, we have all the tools we need to classify extensions to the double.

\begin{theorem} \label{thm:uqextension}
   Fix a $u_q(\mf{sl}_2)$-module algebra structure on $A(u_q(\mf{sl}_2)) = \kk[u]/(u^n - 1)$ as in Notation~\ref{not:ah} by 
   \[
      K \cdot u = q^2 u, \quad F \cdot u = \gamma 1, \quad E \cdot u = \delta u^2,
   \]
   with $q$ a primitive $n^{th}$ root of unity, and $\gamma \delta = -q$.
   Recall the presentation of $D(u_q(\mf{sl}_2))$ as in Theorem~\ref{thm:uqdouble}.
   If the action of $u_q(\mf{sl}_2)$ on $A$ extends to an action of $D(u_q(\mf{sl}_2))$ so that $A$ is a $D(u_q(\mf{sl}_2))$-module algebra, then the action is specified by one of the following two conditions:
   \[
     \begin{array}{r l l l l}
       \text{(i)}
         & a \cdot u = q u,
         & b \cdot u = \gamma (q - q^{-1}) 1, 
         & c \cdot u  = 0, 
         & d \cdot u = q^{-1} u, \quad \text{or} \\
       \text{(ii)}
         & a \cdot u = q^{-1} u, 
         & b \cdot u = 0, 
         & c \cdot u = \gamma^{-1} (q - q^{-1}) u^2, 
         & d \cdot u = q u.
     \end{array}
   \]
   Conversely, by defining the action of $a$, $b$, $c$, and $d$ by either (i) or (ii), an action of $u_q(\mf{sl}_2)$ on $A$ extends to an action of $D(u_q(\mf{sl}_2))$.
\end{theorem}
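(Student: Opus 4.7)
The plan is to address the forward direction (necessity) and the converse separately, with the bulk of the work in the forward direction. The rough strategy is: first use $K$-eigenspace considerations to pin down where each of $a \cdot u$, $b \cdot u$, $c \cdot u$, $d \cdot u$ must live; then use the quantum-determinant-type relation and nilpotency of $b,c$ in $D(u_q(\mf{sl}_2))$ to reduce to a quadratic condition; finally use the mixed commutation relations between $\{E,F\}$ and $\{a,b,c,d\}$ to tie the free parameters to $\gamma$ and $\delta$.

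More precisely, fix an extension. In $D(u_q(\mf{sl}_2))$ (Theorem~\ref{thm:uqdouble}), the elements $a,b,c,d$ $q$-commute with $K$ in a specific way (each has a definite $K$-weight on the adjoint action). Combining this with $K \cdot u = q^2 u$ and the module-algebra identity $K \cdot (h \cdot u) = (K h K^{-1}) \cdot (K \cdot u)$ (applied after expanding via comultiplication) forces $a \cdot u \in A_1 = \kk u$, $d \cdot u \in A_1$, and $b \cdot u$, $c \cdot u$ to lie in specific homogeneous components $A_i$ determined by the $K$-weights of $b$ and $c$. Writing $a \cdot u = \alpha u$, $d \cdot u = \delta' u$, $b \cdot u = \beta_b u^{i_b}$, $c \cdot u = \beta_c u^{i_c}$ reduces the problem to four scalars.

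Next, I would impose the order/nilpotency relations in the double (relations such as $a^n = d^n = 1$, $b^n = c^n = 0$, together with the quantum matrix relations like $ad - q bc = 1$ from $u_q(\mf{sl}_2)^*$), applied to $u$. These yield: $\alpha$ and $\delta'$ are $n^{th}$ roots of unity, and $\alpha \delta' = q^2$ (up to a correction from $\beta_b \beta_c$). Then the mixed relations of $D(u_q(\mf{sl}_2))$ between $E$, $F$, and the generators $a,b,c,d$ --- which come from~\eqref{eq:doublemixed} using the dual pairing --- give relations of the form $(Fa - \cdots a F - \cdots) \cdot u = 0$. Expanding these, using $F \cdot u = \gamma 1$ and $E \cdot u = \delta u^2$, produces scalar equations that express $\beta_b$ in terms of $\gamma, \alpha, \delta'$ and $\beta_c$ in terms of $\delta, \alpha, \delta'$. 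Substituting back into the quantum determinant identity yields a quadratic equation for $\alpha$ (with $\delta' = q^2 \alpha^{-1}$ after a correction), whose two roots $\alpha = q$ and $\alpha = q^{-1}$ are precisely the two cases (i) and (ii). The main obstacle I expect is this bookkeeping: the precise form of the mixed commutation relations in $D(u_q(\mf{sl}_2))$ is intricate (hence the dedicated appendix), and one must be careful to get the correct powers of $q$ in the quadratic so that the two roots produce exactly the scalars $\gamma(q - q^{-1})$ and $\gamma^{-1}(q - q^{-1})$ for $b \cdot u$ and $c \cdot u$, respectively, while annihilating the other of $b, c$.

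For the converse, I would verify directly that assignments (i) and (ii) extend to module-algebra actions of $D(u_q(\mf{sl}_2))$ on $A$. Since $A$ is generated as an algebra by $u$, the $H$-module-algebra axiom $h \cdot (vw) = (\com h 1 \cdot v)(\com h 2 \cdot w)$ reduces the check to verifying that each defining relation of $D(u_q(\mf{sl}_2))$ acts by zero on $u$ (and therefore on every $u^p$ by induction, using the explicit comultiplication). This is a finite calculation, parallel to the computations already carried out in the proofs of Proposition~\ref{prop:sweedlerext} and Theorem~\ref{thm:ahnzmt}, and poses no conceptual difficulty.
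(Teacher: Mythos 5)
Your proposal follows essentially the same route as the paper's proof: $K$-eigenspace analysis to write $a\cdot u = \theta_a u$, $b \cdot u = \theta_b 1$, $c \cdot u = \theta_c u^2$, $d \cdot u = \theta_d u$, then the relation $ad = q^{-1}bc + 1$ together with the mixed commutation relations $Fa = q^{-1}aF + b$, $Fd = qdF - q^2 bK^{-1}$, and $Ea = q^{-1}aE - q^{-1}c$ to force a quadratic whose roots give cases (i) and (ii), with the converse checked directly on generators. One small correction: the quantum-determinant relation applied to $u$ gives exactly $\theta_a\theta_d = 1$ (not $q^2$, and with no correction term, since $(bc)\cdot u = (cb)\cdot u = \theta_b\, \epsilon(c) = 0$), and the paper also uses $bc = cb$ together with $n$ odd to conclude early that $\theta_b = 0$ or $\theta_c = 0$.
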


   \begin{proof}
      Since $K \cdot u = q^2 u$, we use notation similar to that in Remark~\ref{rem:astructure}:
      \[
         A_i = \{a \in A \mid K \cdot a = q^{2i} a \} = \kk u^i.
      \]
      First, since $Ka = aK$, we have $K \cdot a \cdot u = a \cdot K \cdot u = q^2 a \cdot u$, so $a \cdot u \in A_1 =\kk u$. 
      Similarly, since $Kb = q^{-2} bK$, $Kc = q^2 cK$, and $Kd = dK$, we get that $ b \cdot u \in A_0 $, $ c \cdot u \in A_2 $, and $ d \cdot u \in A_1 $. 
      Therefore, there exists $\theta_a, \theta_b, \theta_c, \theta_d \in \kk$ such that
      \begin{gather*}
         a \cdot u = \theta_a u, \hspace{10mm} 
         b \cdot u = \theta_b 1, \hspace{10mm} 
         c \cdot u = \theta_c u^2, \hspace{10mm} \text{and} \hspace{10mm} 
         d \cdot u = \theta_d u .
      \end{gather*}
      Now, note that $ c \cdot 1 = \epsilon ( c ) = 0 $. 
      Thus, since $ bc = cb $ and $ ad = q^{-1} bc + 1 $, we compute that 
      \[
         \theta_a \theta_d u = (ad) \cdot u = q^{-1} c \cdot (b \cdot u) + 1 \cdot u = q^{-1} \theta_b c \cdot 1 + u = u .
      \]
      Therefore, $ \theta_d = \theta_a^{-1} $. 
      Using the fact that $a^n = 1$, for some integer $ i $, we have $\theta_a = q^i$ and $\theta_d = q^{-i}$.
      Note that $b \cdot u^2 = ( b \cdot u )( a \cdot u ) + ( d \cdot u ) ( b \cdot u ) =  \theta_b \theta_a u + \theta_d \theta_b u = \theta_b ( \theta_a + \theta_d) u $. 
      Thus, 
      \[
         \theta_c \theta_b ( \theta_a + \theta_d ) u = ( bc ) \cdot u = ( cb ) \cdot u = \theta_b c \cdot 1 =  0.
      \]
      Since $\theta_a = q^i$ is an odd root of unity, $ \theta_a \neq - \theta_d (= - \theta_a^{-1}).$
      Thus, we must have 
      \begin{gather} \label{eq:phitheta}
         \theta_b = 0 \quad \text{or} \quad \theta_c = 0.
      \end{gather}
      We also compute, using $a \cdot 1 = \epsilon(a) = 1$ and $d \cdot 1 = \epsilon(d) = 1$, that 
      \begin{gather*}
         \theta_a \gamma 1 = (Fa) \cdot u = q^{-1} (aF) \cdot u + b \cdot u = (q^{-1} \gamma + \theta_b) 1 \hspace{10mm} \text{and}  \\
         \theta_d \gamma 1 = (Fd) \cdot u = q (dF) \cdot u - q^2 (bK^{-1}) \cdot u = (q \gamma - \theta_b) 1,
      \end{gather*}
      which shows that 
      \begin{gather} \label{eq:etazeta}
         \theta_a = q^{-1} + \theta_b \gamma^{-1} \quad 
         \text{and} \quad \theta_d = q - \theta_b \gamma^{-1}.
      \end{gather}
      Therefore,
      \[
         1 = \theta_a \theta_d = ( q^{-1} + \theta_b \gamma^{-1}) ( q - \theta_b \gamma^{-1} ) = 1 + ( q - q^{-1}) \theta_b \gamma^{-1} - \theta_b^2 \gamma^{-2},
      \]
      implying that $ 0 = \theta_b \gamma^{-1} ( q - q^{-1} - \theta_b \gamma^{-1}) $. 
      Since $\gamma \neq 0$, we have
      \begin{gather*}
         \theta_b = 0 \quad \text{or} \quad
         \theta_b = \gamma (q - q^{-1}).
      \end{gather*}
      The former will correspond to \textit{(ii)} and the latter to \textit{(i)}.
      In case \textit{(i)}, by \eqref{eq:phitheta}, $ \theta_c = 0 $, and by \eqref{eq:etazeta}, $ \theta_a = q $ and $ \theta_d = q^{-1} $. 
      On the other hand, in case \textit{(ii)}, by \eqref{eq:etazeta}, $ \theta_a = q^{-1} $ and $ \theta_d = q $. 
      Also, using the fact that $\gamma \delta = -q$, $ Ea = q^{-1} aE - q^{-1} c $, and $ a \cdot u^2 = ( a \cdot u )^2 + ( c \cdot u ) ( b \cdot u ) = q^{-2} u^2 $, we have 
      \[
         -\gamma^{-1} u^2 = q^{-1} \delta u^2 = (Ea) \cdot u = q^{-1} ( aE ) \cdot u - q^{-1} c \cdot u = q^{-1} \delta a \cdot u^2 - q^{-1} \theta_c u^2 
         = - ( q^{-2} \gamma^{-1} + q^{-1} \theta_c ) u^2.
      \]
      Therefore, $ \gamma^{-1} = q^{-2} \gamma^{-1} + q^{-1} \theta_c $, which implies $ \theta_c = \gamma^{-1} (q - q^{-1}) $.
      Therefore, we have shown that an action of $D(u_q(\mf{sl}_2))$ is specified by either \textit{(i)} or \textit{(ii)}.
      
      It is straightforward to check the converse: that $A $ is a $D(u_q(\mf{sl}_2))$-module algebra with either of these structures.
   \end{proof}

Perhaps unsurprisingly, while $T_n(q)$ had a unique extension of its action on $A(T_n(q))$ to its double, $u_q(\mf{sl}_2)$ has exactly two extensions of its action on $A(u_q(\mf{sl}_2))$ to its double.
We are led to ask the following.

\begin{question} \label{q:borel}
   For a semisimple finite-dimensional Lie algebra $\mf g$, is the answer to Question~\ref{questions}(c) for $u_q(\mf g)$ twice what the answer would be for a Borel subalgebra?
\end{question}

\section{Acknowledgements}

I would like to thank my advisor, Chelsea Walton, for her continual guidance and for introducing me to the world of quantum symmetry.
This work is partially supported by C. Walton's NSF grant DMS-1663775 and her Alfred P. Sloan research fellowship.


\appendix

\section{The Drinfel'd double $D( u_q(\mf{sl}_2) )$} \label{sect:uqdouble}

For Section~\ref{sect:uqsl2}, we require a presentation of the double $D(u_q(\mf{sl}_2))$.
Using the fact that $u_q(\mf{sl}_2)$ is factorizable, \cite[Theorem~2.9]{factproblems} provides a nice algebra presentation of $D(u_q(\mf{sl}_2))$ as $u_q(\mf{sl}_2) \tens u_q(\mf{sl}_2)$.
However, with this presentation, the coproduct becomes much more complicated.
The method we use to extend actions of a Hopf algebra to its double requires an uncomplicated coproduct, so we provide here a different presentation for $D(u_q(\mf{sl}_2))$.

Let $n \geq 3$ be an odd integer and let $q \in \kk$ be a primitive $n^{th}$ root of unity.
The quantum group $U_q(\mf{sl}_2)$, often called the \emph{quantized universal enveloping algebra} of $\mf{sl}_2$, is the Hopf algebra generated by grouplike elements $K$ and $K^{-1}$, a $(1,K)$-skew primitive element $E$, and a $(K^{-1},1)$-skew primitive element $F$, subject to the relations
\[
  K K^{-1} = K^{-1} K = 1, \quad
  KE = q^2 EK, \quad
  KF = q^{-2} FK, \quad
  EF - FE = \dfrac { K - K^{-1} } { q - q^{-1} }.
\]
The Frobenius-Lusztig kernel $u_q(\mf{sl}_2)$ is then the quotient of $U_q(\mf{sl}_2)$ by the (Hopf) ideal generated by $K^n - 1$, $E^n$, and $F^n$.
Note that $\{E^i F^j K^\ell\}_{0 \leq i,j,\ell < n}$ is a basis of $u_q(\mf{sl}_2)$.
We compute here a presentation of the Drinfel'd double $D(u_q(\mf{sl}_2))$.
This is accomplished by first showing that $u_q(\mf{sl}_2)$ is dual to a quotient of the \emph{quantized coordinate ring} $\mc O_q(SL_2)$.
This result is well-known (see \cite[III.7.10]{browngoodearl}), but we include here an explicit proof for completion.

The quantum group $O_q(SL_2)$ is the Hopf algebra generated by $a,b,c,d$ subject to the relations
\begin{gather*}
  ba = qab, \quad 
  ca = qac, \quad 
  db = qbd, \quad 
  dc = qcd, \quad
  bc = cb, \quad 
  ad = q^{-1} bc + 1, \quad
  da = qbc + 1,
\end{gather*}
with coalgebra structure and antipode given by
\begin{gather*}
  \Delta(a) = a \tens a + b \tens c, \quad
  \Delta(b) = a \tens b + b \tens d, \quad
  \Delta(c) = c \tens a + d \tens c, \quad
  \Delta(d) = c \tens b + d \tens d   \\
  \epsilon(a) = \epsilon(d) = 1, \quad
  \epsilon(b) = \epsilon(c) = 0, \quad
  S(a) = d, \quad
  S(b) = -qb, \quad
  S(c) = -q^{-1}c, \quad
  S(d) = a.
\end{gather*}
One can easily verify that the ideal $J$ generated by $a^n - 1$, $b^n$, $c^n$, and $d^n - 1$ is a Hopf ideal, so we define $\overline{ \mc O_q(SL_2)} \defeq \mc O_q(SL_2) / J. $ 
In $ \overline{\mc O_q(SL_2)}$, the generators $a$ and $d$ are invertible.
Using this, the relation $da = qbc + 1$ becomes vacuous.
Also, we can use the relation $ad = q^{-1}bc + 1$ to eliminate the generator $a$ from the algebra presentation of $\overline{\mc O_q (SL_2)}$. 
If we do so, all other relations involving $a$ become vacuous, so we have 
\[
   \overline {  \mc O_q ( SL_2 ) } \cong \kk \langle b, c, d \mid 
   b^n, \ 
   c^n, \ 
   d^n - 1 , \ 
   bc - cb, \ 
   db - qbd, \ 
   dc - qcd 
   \rangle
\]
as algebras.
Thus, the finite set $ \{ b^i c^j d^\ell \} _ { 0 \leq i, j, \ell \leq n-1 } $ is a basis for $ \overline {  \mc O_q ( SL_2 ) } $, and $ {\dim_\kk(\overline {  \mc O_q ( SL_2 )  } ) = n^3} $. 

The first step toward showing that $\overline{\mc O_q(SL_2)} \cong u_q(\mf{sl}_2)^*$ is exhibiting a duality between $\mc O_q(SL_2)$ and $U_q(\mf{sl}_2)$. 
This is done in \cite[VII.4]{kassel} and we recall the duality here.
Let $ V_{1,1} $ denote the highest weight $ U_q ( \mf{sl}_2 ) $-module with basis $ v_0, v_1 $ determined by 
\[
  E \cdot v_1 = v_0 ,\quad 
  F \cdot v_0 = v_1 ,\quad 
  K \cdot v_0 = q v_0 , \quad 
  K \cdot v_1 = q^{-1} v_1 ,\quad  
  E \cdot v_0 = F \cdot v_1 = 0 .
\]
In other words, if $ \rho: U_q ( \mf{sl}_2 ) \lra \End_\kk ( V_{1,1} ) $ denotes the representation, then, identifying $ \End_\kk ( V_{1,1}) $ with $ \text{M}_2 (\kk) $ on the ordered basis $ \{ v_0, v_1 \} $, we have 
$ \rho(E) = 
    \begin{pmatrix}
      0 & 1 \\ 0 & 0
    \end{pmatrix}$,
$ \rho(F) = 
    \begin{pmatrix}
      0 & 0 \\ 1 & 0
    \end{pmatrix}$,
and
$ \rho(K) =
    \begin{pmatrix}
      q & 0 \\ 0 & q^{-1}
    \end{pmatrix}$.
Now, for any element $ u \in U_q ( \mf { sl } _2 ) $, define 
\begin{equation*}
   \rho ( u ) = 
   \left( \begin{array}{cc}
   A(u) & B(u)  \\
   C(u) & D(u) \end{array} \right)
\end{equation*}
to get four elements $A$, $B$, $C$, and $D$ of $ U_q ( \mf { sl} _2 ) ^* $. 

\begin{theorem}[{\cite[VII.4.4]{kassel}}] \label{thm:uqduality} 
   Let $ \phi: \mc O_q ( SL_2 ) \lra U_q ( \mf { sl }_2 )^* $ be defined by $ \phi ( a ) = A $, $ \phi ( b ) = B $, $ \phi ( c ) = C $, $ \phi ( d ) = D $. 
   Then $ \phi $ is a Hopf algebra map, and the bilinear form $ \langle u,x \rangle = \phi ( u ) ( x ) $ realizes a duality between the Hopf algebras $ \mc O_q ( SL_2 ) $ and $ U_q ( \mf { sl}_2 ) $.  \qed 
\end{theorem}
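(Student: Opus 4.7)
The plan is to reduce the theorem to the claim that $\phi$ is a Hopf algebra homomorphism; the duality axioms \eqref{eq:duality} are then immediate from the definition $\langle u, x \rangle = \phi(u)(x)$. Concretely, I would verify in turn that $\phi$ intertwines the comultiplications and counits, the multiplications and units, and the antipodes.

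The comultiplication and counit compatibilities are essentially formal. Since $\rho$ is an algebra homomorphism, $\rho(uv)=\rho(u)\rho(v)$ in $\End_\kk(V_{1,1})$; reading off the $(i,j)$-entry of both sides yields $T_{ij}(uv) = \sum_{k \in \{0,1\}} T_{ik}(u)\, T_{kj}(v)$, which is precisely the statement $\Delta_{U_q^*}(T_{ij}) = \sum_k T_{ik} \tens T_{kj}$ in $U_q(\mf{sl}_2)^*$. These match the coproducts of $a, b, c, d$ in $\mc O_q(SL_2)$ under $\phi$. Likewise $\rho(1)=I$ gives $T_{ij}(1)=\delta_{ij}$, matching the counit.

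The bulk of the argument is verifying that $A, B, C, D$ satisfy the quadratic defining relations of $\mc O_q(SL_2)$ inside $U_q(\mf{sl}_2)^*$. For a single relation such as $BA = qAB$, one expands $(BA)(u) = B(u_{(1)}) A(u_{(2)})$ via the coproduct of $U_q(\mf{sl}_2)$ and compares with $qA(u_{(1)})B(u_{(2)})$; since both sides are linear functionals, it suffices to check on the PBW basis $\{E^i F^j K^\ell\}$, using $\Delta(E)=1\tens E + E\tens K$, $\Delta(F)=F\tens 1 + K^{-1}\tens F$, $\Delta(K)=K\tens K$ inductively. The conceptually cleanest approach, and the one I would follow (cf.\ \cite[VIII.3]{kassel}), uses the R-matrix formalism: there is an invertible $R \in \End_\kk(V_{1,1}\tens V_{1,1})$ with $R\,(\rho\tens\rho)(\Delta u) = (\rho\tens\rho)(\Delta^{op} u)\,R$ for every $u \in U_q(\mf{sl}_2)$. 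Written entry-wise as the FRT equation $RT_1T_2 = T_2T_1R$ in $M_2(\kk)\tens M_2(\kk)$ with coefficients in $U_q(\mf{sl}_2)^*$ (with $T=(T_{ij})$, $T_1 = T\tens I$, $T_2 = I\tens T$), this single identity packages all six commutation relations among $A,B,C,D$ at once; the remaining relations $ad - q^{-1}bc = 1 = da - qbc$ correspond to the quantum determinant of $T$ acting as the identity on the one-dimensional $U_q(\mf{sl}_2)$-module $\det V_{1,1}$.

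For the antipode, we need $\phi \circ S_{\mc O_q} = S_{U_q^*} \circ \phi$. Directly, $S_{U_q^*}(T_{ij})(u) = T_{ij}(S u) = \rho(Su)_{ij}$, so I would compute the $2\times 2$ matrices $\rho(S(K))$, $\rho(S(E))$, $\rho(S(F))$ using $S(K)=K^{-1}$, $S(E)=-EK^{-1}$, $S(F)=-KF$, and verify that the resulting matrix entries are those of $D$, $-qB$, $-q^{-1}C$, $A$ respectively---which matches $\phi$ applied to $S(a)=d$, $S(b)=-qb$, $S(c)=-q^{-1}c$, $S(d)=a$. The main obstacle in this program is the algebra-map step: the direct PBW verification is long and case-heavy, while the R-matrix shortcut requires constructing or citing the explicit R-matrix on $V_{1,1}\tens V_{1,1}$. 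Once these three steps are in place, $\phi$ is a Hopf algebra map and the bilinear form realizes a duality, as asserted.
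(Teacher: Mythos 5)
Your outline is correct, and it is essentially the argument in the cited source: the paper itself gives no proof of Theorem~\ref{thm:uqduality} (it is quoted verbatim from Kassel VII.4.4), and Kassel proceeds exactly as you do --- the coalgebra compatibility is formal from $\rho$ being an algebra map, the real work is checking the defining relations of $\mc O_q(SL_2)$ on the matrix coefficients $A,B,C,D$ via the convolution product on the PBW basis, and the antipode compatibility then comes for free (a bialgebra map between Hopf algebras automatically intertwines antipodes, so your direct matrix check of $\rho(S(K))$, $\rho(S(E))$, $\rho(S(F))$ is sound but not even needed). The R-matrix/FRT packaging you mention is a legitimate alternative for the relation-checking step, though you would need to fix conventions for $R$ carefully to land on $ba=qab$ rather than $ba=q^{-1}ab$.
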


\begin{lemma}
  For the map $\phi: \mc O_q(SL_2) \lra U_q(\mf{sl}_2)^*$ given in Theorem~\ref{thm:uqduality}, we have that $\textup{Im}(\phi) \subseteq u_q(\mf{sl}_2)^*$.
\end{lemma}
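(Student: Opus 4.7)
The plan is to identify $u_q(\mf{sl}_2)^*$ with the annihilator $I^\perp \subseteq U_q(\mf{sl}_2)^*$, where $I$ is the Hopf ideal generated by $E^n$, $F^n$, and $K^n-1$, and then verify directly that each of $A = \phi(a)$, $B = \phi(b)$, $C = \phi(c)$, $D = \phi(d)$ lies in $I^\perp$. Since $\phi$ is an algebra map, this will suffice provided $I^\perp$ is closed under multiplication, which is immediate: because $I$ is a coideal, $\Delta(I) \subseteq I \tens U_q(\mf{sl}_2) + U_q(\mf{sl}_2) \tens I$, so for $f,g \in I^\perp$ and $x \in I$ we get $(fg)(x) = f(\com{x}{1}) g(\com{x}{2}) = 0$.

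The key step is to observe that the representation $\rho: U_q(\mf{sl}_2) \to \End_\kk(V_{1,1})$ itself annihilates $I$. Since $\ker(\rho)$ is an ideal, it is enough to check this on the three generators of $I$. By the explicit matrix formulas for $\rho(E), \rho(F), \rho(K)$ given just before the statement, we have
\[
\rho(E)^n = \begin{pmatrix} 0 & 1 \\ 0 & 0 \end{pmatrix}^n = 0, \qquad
\rho(F)^n = \begin{pmatrix} 0 & 0 \\ 1 & 0 \end{pmatrix}^n = 0, \qquad
\rho(K)^n = \begin{pmatrix} q^n & 0 \\ 0 & q^{-n} \end{pmatrix} = I_2,
\]
using $n \geq 3 \geq 2$ for the first two and $q^n = 1$ for the last. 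Hence $\rho(E^n) = \rho(F^n) = \rho(K^n - 1) = 0$, so $\rho(I) = 0$.

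Finally, $A(x), B(x), C(x), D(x)$ are by definition the matrix entries of $\rho(x)$, so $\rho(I) = 0$ gives $A(I) = B(I) = C(I) = D(I) = 0$, i.e., $A, B, C, D \in I^\perp$. Combined with the subalgebra observation, this yields $\textup{Im}(\phi) \subseteq I^\perp = u_q(\mf{sl}_2)^*$. The only mildly delicate point is the identification $u_q(\mf{sl}_2)^* \cong I^\perp$, which is the standard fact that the dual of the quotient $U_q(\mf{sl}_2)/I$ embeds in $U_q(\mf{sl}_2)^*$ as the functionals annihilating $I$; I expect no real obstacle beyond this bookkeeping, since all the substantive content reduces to the three $2 \times 2$ matrix computations above.
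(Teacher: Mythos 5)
Your proof is correct and follows essentially the same route as the paper: both reduce the claim to showing that the representation $\rho$ kills the generators $E^n$, $F^n$, $K^n-1$ of the ideal $I$, via the explicit $2\times 2$ matrices. You simply spell out more of the bookkeeping (the identification $u_q(\mf{sl}_2)^* \cong I^\perp$ and the closure of $I^\perp$ under convolution) that the paper leaves implicit.
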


  \begin{proof}
    We only need to show that $A$, $B$, $C$, and $D$ all vanish on the (Hopf) ideal $I$ of $U_q(\mf{sl}_2)$ generated by $K^n - 1$, $E^n$, and $F^n$, which amounts to showing that $\rho(E^n) = \rho(F^n) = \rho(K^n - 1) = 0$.
    We have that $\rho(E^n) = \rho(F^n) = 0$ because $\rho(E)$ and $\rho(F)$ each have nilpotency order 2, while $n \geq 3$. 
    That $\rho(K^n - 1)$ = 0 follows because $q$ is an $n^{th}$ root of unity.
  \end{proof}
  
We now have a Hopf algebra map $\phi: \mc O_q(SL_2) \lra u_q(\mf{sl}_2)^*$. 
We wish to show that $\phi$ induces an isomorphism of Hopf algebras $\overline \phi: \overline{\mc O_q(SL_2)} \lra u_q(\mf{sl}_2)^*$.
To do this, we will need the following calculations, which can be verified using the pairing from Theorem~\ref{thm:uqduality}.

\begin{lemma} \label{lem:eifj}
  For $ i, j $ nonnegative integers, 
  \begin{gather*}
    \langle a^n, E^i \rangle = \langle d^n, E^i \rangle=  \delta_{i,0}, \quad
    \langle a^n, F^j \rangle = \langle d^n, F^j \rangle = \delta_{j,0}, \quad
    \langle b^n, E^i \rangle =\langle  b^n, F^j \rangle = 0 .
  \end{gather*}
  
  \vspace{-.25in} \qed
\end{lemma}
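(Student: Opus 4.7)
The plan is to realize the pairing values as matrix coefficients on the tensor power representation $V_{1,1}^{\tens n}$, then read them off combinatorially. By Theorem~\ref{thm:uqduality}, the map $\phi: \mc O_q(SL_2) \to U_q(\mf{sl}_2)^*$ is a Hopf algebra homomorphism, so $\langle a^n, x \rangle = A^n(x)$, $\langle b^n, x \rangle = B^n(x)$, and $\langle d^n, x \rangle = D^n(x)$, where the powers are taken as convolution products in the dual Hopf algebra. The standard fact that the convolution product of matrix coefficients coincides with the matrix coefficients of the tensor product of representations then identifies $A^n(x)$, $D^n(x)$, and $B^n(x)$ with the coefficients of $v_0^{\tens n}$, $v_1^{\tens n}$, and $v_0^{\tens n}$ in $x \cdot v_0^{\tens n}$, $x \cdot v_1^{\tens n}$, and $x \cdot v_1^{\tens n}$ respectively, under the action of $U_q(\mf{sl}_2)$ on $V_{1,1}^{\tens n}$.

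With this setup, the iterated coproducts are
\[
  \Delta^{(n-1)}(E) = \sum_{k=1}^n 1^{\tens(k-1)} \tens E \tens K^{\tens(n-k)}, \qquad
  \Delta^{(n-1)}(F) = \sum_{k=1}^n (K^{-1})^{\tens(k-1)} \tens F \tens 1^{\tens(n-k)}.
\]
Since $E \cdot v_0 = 0$ and $F \cdot v_1 = 0$, every summand of $\Delta^{(n-1)}(E)$ annihilates $v_0^{\tens n}$ (the $k$-th term applies $E$ to $v_0$ in the $k$-th tensor factor), and similarly every summand of $\Delta^{(n-1)}(F)$ annihilates $v_1^{\tens n}$. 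Thus $E^i \cdot v_0^{\tens n} = 0$ and $F^j \cdot v_1^{\tens n} = 0$ for all $i, j \geq 1$, forcing $A^n(E^i) = D^n(F^j) = 0$ for $i, j \geq 1$, as well as $B^n(F^j) = 0$ for $j \geq 1$. The $i = j = 0$ cases are immediate: $A^n(1) = D^n(1) = 1$ and $B^n(1) = 0$.

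The remaining identities, namely $A^n(F^j) = \delta_{j,0}$, $D^n(E^i) = \delta_{i,0}$, and $B^n(E^i) = 0$, follow from a single combinatorial observation: because $F$ sends $v_0$ to $v_1$ (and $v_1$ to $0$) while $E$ sends $v_1$ to $v_0$ (and $v_0$ to $0$), each application of $F$ (respectively $E$) to a simple-tensor basis vector of $V_{1,1}^{\tens n}$ either annihilates it or strictly increases (respectively decreases) the total number of $v_1$-factors by exactly one. Consequently $F^j \cdot v_0^{\tens n}$ is a linear combination of basis tensors each containing exactly $j$ copies of $v_1$, and $E^i \cdot v_1^{\tens n}$ is a linear combination of basis tensors each containing exactly $i$ copies of $v_0$; neither admits a $v_0^{\tens n}$ or $v_1^{\tens n}$ component once $j \geq 1$ or $1 \leq i \leq n-1$, and the case $i \geq n$ is trivial since $E^i = 0$ in $u_q(\mf{sl}_2)$. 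The main conceptual point is the identification in the first paragraph of the convolution product with a matrix coefficient on $V_{1,1}^{\tens n}$, which is standard; after that, everything reduces to direct inspection of how $E$ and $F$ permute the number of $v_0$'s and $v_1$'s in a simple tensor, so no serious obstacle arises.
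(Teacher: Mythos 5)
Your proof is correct. The paper offers no written proof of this lemma (it is stated as a verification ``using the pairing from Theorem~\ref{thm:uqduality}''), and what you have written is the natural way to organize that verification: iterating $\langle uv, x\rangle = \langle u, x_{(1)}\rangle\langle v, x_{(2)}\rangle$ identifies $A^n$, $B^n$, $D^n$ with matrix coefficients of $V_{1,1}^{\tens n}$, and your weight-counting observation disposes of nearly every case at once.

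The one place to be careful is $\langle b^n, E^i\rangle$ for $i \geq n$, and in particular $i = n$: there the combinatorial argument only shows that $E^n \cdot v_1^{\tens n}$ is some scalar multiple of $v_0^{\tens n}$, and the justification ``$E^i = 0$ in $u_q(\mf{sl}_2)$'' is not immediate, because the pairing of Theorem~\ref{thm:uqduality} is with $U_q(\mf{sl}_2)$, where $E^n \neq 0$. The gap closes in one line: the lemma immediately preceding this one shows that $A$, $B$, $C$, $D$ all annihilate the Hopf ideal $I$ generated by $K^n-1$, $E^n$, $F^n$, and since $I$ is a coideal, the space of functionals vanishing on $I$ is closed under convolution; hence $B^n(E^i) = 0$ for all $i \geq n$. (Alternatively, a direct computation shows the scalar in question is $[n]_q!$ up to a unit, which vanishes because $q$ is a primitive $n^{th}$ root of unity.) With that sentence added, the proof is complete.
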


\begin{proposition}
  The map $ \phi $ induces a Hopf algebra map $ \overline \phi :\overline{\mc O_q(SL_2)} \lra u_q(\mf{sl}_2)^*$ determined by $ \phi = \overline \phi \circ \pi $, where $ \pi : \mc O_q ( SL_2 ) \lra \overline{ \mc O_q ( SL _ 2 ) }$ is the usual projection.
  Hence, the bilinear form $\langle u, x \rangle = \overline \phi(u)(x)$ realizes a duality between the Hopf algebras $\overline{\mc O_q(SL_2)}$ and $u_q(\mf{sl}_2)$.
\end{proposition}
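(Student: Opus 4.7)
The plan is to establish that the Hopf ideal $J \subset \mc O_q(SL_2)$ generated by $a^n - 1, b^n, c^n, d^n - 1$ is contained in $\ker \phi$. Once this is done, $\phi$ descends uniquely to the required $\overline \phi : \overline{\mc O_q(SL_2)} \lra u_q(\mf{sl}_2)^*$, and this $\overline \phi$ is automatically a Hopf algebra map because $\phi$ is one and $J$ is a Hopf ideal. The duality axioms \eqref{eq:duality} for the bilinear form $\langle u, x \rangle_{\overline \phi} \defeq \overline \phi(u)(x)$ then follow at once from those of $\phi$ recorded in Theorem~\ref{thm:uqduality}, since the new pairing is obtained by precomposing with $\pi$.

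Because $\phi$ is an algebra homomorphism, $\phi(J)$ is the two-sided ideal of $u_q(\mf{sl}_2)^*$ generated by $\phi(a^n - 1), \phi(b^n), \phi(c^n), \phi(d^n - 1)$, so it suffices to show these four functionals are zero. By linearity, I will verify the pairings
\[
  \langle a^n, E^i F^j K^\ell \rangle = \langle d^n, E^i F^j K^\ell \rangle = \delta_{i,0}\,\delta_{j,0}, \quad \langle b^n, E^i F^j K^\ell \rangle = \langle c^n, E^i F^j K^\ell \rangle = 0,
\]
on the basis $\{E^i F^j K^\ell\}_{0 \leq i, j, \ell < n}$ of $u_q(\mf{sl}_2)$; these match $\epsilon(E^i F^j K^\ell) = \delta_{i,0}\,\delta_{j,0}$ and $0$ respectively, establishing what we need.

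The computation proceeds by factoring $E^i F^j K^\ell = (E^i F^j) \cdot K^\ell$ and applying $\langle p, uv \rangle = \langle \com p 1, u \rangle \langle \com p 2, v \rangle$. A $q^2$-binomial expansion (using $(b \tens c)(a \tens a) = q^2 (a \tens a)(b \tens c)$ together with \eqref{eq:skewbinom}) gives
\[
   \Delta(a^n) ~=~ \sum_{s=0}^n {\binom n s}_{q^2}\, a^{n-s} b^s \tens a^{n-s} c^s ,
\]
and the grouplikeness of $K^\ell$ together with $\langle c, K^\ell \rangle = 0$ kills every term with $s \geq 1$, reducing $\langle a^n, E^i F^j K^\ell \rangle$ to $\langle a^n, E^i F^j \rangle$. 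A second factorization $E^i F^j = E^i \cdot F^j$, combined with $\Delta(E^i) = \sum_k {\binom i k}_{q^2} E^k \tens E^{i-k} K^k$ and the analogous formula for $\Delta(F^j)$, splits the remaining pairing into atomic terms of the form $\langle a^{\bullet}, E^{\bullet} \rangle$, $\langle a^{\bullet}, F^{\bullet} \rangle$, $\langle a^{\bullet}, K^{\bullet} \rangle$, and their $b$-, $c$-, $d$-counterparts, whose values are supplied by Lemma~\ref{lem:eifj} and by the grouplike computation $\langle p^n, K^\ell \rangle = \langle p, K^\ell \rangle^n$. Parallel treatments handle $d^n - 1$, $b^n$, and $c^n$, the $c^n$ input being the evident symmetric analog of Lemma~\ref{lem:eifj}, proven by the same argument.

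The main obstacle is the combinatorial bookkeeping: in each of the four cases one must verify that, after iterated applications of $\Delta$ and the multiplicativity of the pairing, exactly one summand survives and matches the required $\delta$-symbol (for $a^n$ and $d^n$), or that every summand vanishes (for $b^n$ and $c^n$). The $q$-commutation relations in $\mc O_q(SL_2)^{\tens 2}$, together with the pigeonhole supplied by the vanishing of small pairings of $E,F,K$ with powers of $a,b,c,d$ from Lemma~\ref{lem:eifj}, collapse each iterated coproduct down to a single controllable term, which completes the argument.
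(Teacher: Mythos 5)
Your overall strategy is the paper's: reduce to showing that $a^n-1$, $b^n$, $c^n$, $d^n-1$ lie in $\ker\phi$ by pairing against the basis $\{E^iF^jK^\ell\}$ and invoking Lemma~\ref{lem:eifj}, and your first reduction (killing the $K^\ell$ factor via $\langle c, K^\ell\rangle = 0$ and $\langle a, K^\ell\rangle^n = q^{n\ell} = 1$) is fine. But there is a genuine gap in the second step. After that reduction you must evaluate $\langle a^n, E^iF^j\rangle = \sum_{s=0}^n {\binom n s}_{q^2}\,\langle a^{n-s}b^s, E^i\rangle\langle a^{n-s}c^s, F^j\rangle$, and the intermediate factors for $0 < s < n$ are neither supplied by Lemma~\ref{lem:eifj} (which covers only exact $n$th powers of $a$, $b$, $d$) nor individually zero: for instance $\langle ab, E\rangle = \langle a, 1\rangle\langle b, E\rangle + \langle a, E\rangle\langle b, K\rangle = 1$ and $\langle ac, F\rangle = q^{-1}$. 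So the ``pigeonhole'' you invoke in your last paragraph does not collapse these sums term by term, and the bookkeeping you defer is precisely where the argument fails as written.

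The missing idea --- and the one the paper uses --- is that since $n$ is odd, $q^2$ is a primitive $n$th root of unity, so ${\binom n s}_{q^2} = 0$ for all $0 < s < n$ (the quantum binomial theorem, \cite[Corollary~7.2.2]{radford}). Hence $\Delta(a^n) = a^n \tens a^n + b^n \tens c^n$ exactly, and similarly for $b^n$, $c^n$, $d^n$; every pairing that then arises involves only $n$th powers of $a,b,c,d$, which is exactly what Lemma~\ref{lem:eifj} (plus its $c^n$-analog, which you correctly note is needed) supplies. Your displayed formula for $\Delta(a^n)$ is literally correct, and you even record the commutation relation $(b\tens c)(a\tens a) = q^2(a\tens a)(b\tens c)$ that makes this vanishing available --- but you never observe that the middle coefficients are zero, and without that observation the reduction to Lemma~\ref{lem:eifj} does not go through. (The appeal to $\Delta(E^i)$ and $\Delta(F^j)$ is also misplaced: to split $\langle a^n, E^i\cdot F^j\rangle$ one comultiplies $a^n$, not $E^i$ or $F^j$; you would only need $\Delta(E^i)$ to handle the intermediate products $a^{n-s}b^s$ that the coefficient vanishing eliminates.) Adding that one observation turns your outline into the paper's proof.
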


  \begin{proof}
    We need to show that $ \phi $ vanishes on $a^n - 1$, $b^n$, $c^n$, and $d^n - 1$.  
    Note that $(b \tens c)(a \tens a) = q^2 (a \tens a)(b \tens c)$, and that $q^2$ is a primitive $n^{th}$ root of unity because $n$ is odd.
    Thus, by \cite[Corollary~7.2.2]{radford}, $ \Delta ( a^n ) = ( a \tens a + b \tens c ) ^n = a^n \tens a^n + b^n \tens c^n $, and similarly for $ \Delta ( b^n ), \Delta ( c^n ) $, and $ \Delta ( d^n ) $. 
    Thus, using Lemma~\ref{lem:eifj} and the duality of Theorem~\ref{thm:uqduality}, we compute for $ i, j, $ and $ k $ nonnegative integers,
     \begin{align*}
       \langle a^n,  E^i F^j K^\ell \rangle
         &= \langle a^n,  E^i F^j \rangle \langle a^n,  K^\ell \rangle + \langle b^n,  E^i F^j\rangle \langle c^n, K^\ell \rangle    
         ~=~ q^{\ell n } \langle a^n,  E^i F^j \rangle + \langle b^n,  E^i F^j \rangle \langle c, K^\ell \rangle ^n \\
         &= \langle a^n,  E^i F^j \rangle 
         ~=~ \langle a^n,  E^i \rangle \langle a^n,  F^j \rangle + \langle b^n,  E^i \rangle \langle c^n, F^j \rangle 
         ~=~ \delta_{i,0} \delta_{j,0}, \\
       \langle b^n, E^i F^j K^\ell \rangle 
         &= \langle a^n,  E^i F^j \rangle \langle b^n,  K^\ell \rangle + \langle b^n,  E^i F^j \rangle \langle d^n, K^\ell \rangle 
         ~=~ \langle b^n,  E^i F^j \rangle
         ~=~ 0, \\
       \langle c^n, E^i F^j K^\ell \rangle 
         &= \langle c^n, E^i F^j \rangle \langle a^n,  K^\ell \rangle + \langle d^n, E^i F^j \rangle \langle c^n, K^\ell \rangle 
         ~=~ \langle c^n, E^i F^j \rangle 
         ~=~ 0, \\
       \langle d^n, E^i F^j K^\ell \rangle 
         &= \langle c^n, E^i F^j \rangle \langle b^n, K^\ell \rangle + \langle d^n, E^i F^j \rangle \langle d^n, K^\ell \rangle 
         ~=~ \langle d^n, E^i F^j  \rangle   \\
         &= \langle c^n, E^i \rangle \langle b^n, F^j \rangle + \langle d^n, E^i \rangle \langle d^n, F^j \rangle 
         ~=~ \delta_{i,0} \delta_{j, 0}.
    \end{align*}
    We have thus shown that $ \phi $ vanishes on $ b^n $ and $ c^n $. 
    Now, since $ \epsilon $ is an algebra map, we have that $\langle 1, E^i F^j K^\ell \rangle = \epsilon ( E^i F^j K^\ell ) =    \epsilon ( E )^i \epsilon ( F ) ^j \epsilon ( K ) ^ \ell = \delta_{i,0} \delta_{j,0} $. 
    Thus, $\phi$ also vanishes on $ a^n - 1 $ and $ d^n - 1 $.
   \end{proof}
   
At this point, we want to establish that the duality just formed between $\overline{\mc O_q(SL_2)}$ and $u_q(\mf{sl}_2)$ is a perfect duality.
We do this by showing that $\overline \phi$ is surjective, for which we will need the following technical computation.

For the basis $\{E^i F^j K^\ell\}$ of $u_q(\mf{sl}_2)$, we let $\{ p_{i,j,\ell} \}$ denote the dual basis of $u_q(\mf{sl}_2)^*$.
Because $K^n = 1$ in $u_q(\mf{sl}_2)$, we will take the last argument of these basis elements modulo $n$.   

Now via elementary computations we have in terms of the dual basis $\{ p_{i,j,\ell} \}$ of $u_q(\mf{sl}_2)^*$, that
   \begin{equation} \label{eq:oqdualbasis}
      B^s C^t D^r = [s]_q! [t]_q!\sum_{\ell=0}^{n-1} q^{-\ell(r+s-t) - rs} p_{s, t, \ell}.
   \end{equation}
  
\begin{proposition} \label{prop:uqsl2perfectdual}
  The map $\overline \phi: \overline{\mc O_q(SL_2)} \lra u_q(\mf{sl}_2)^*$ is surjective, and hence is an isomorphism.
  Thus, the bilinear form $\langle u,x \rangle = \overline \phi (u)(x)$ realizes a perfect duality between $\overline{\mc O_q(SL_2)}$ and $u_q(\mf{sl}_2)$.
\end{proposition}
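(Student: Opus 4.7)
The plan is to reduce to surjectivity by dimension counting, then produce explicit preimages of the dual basis via a Vandermonde argument. Both $\overline{\mc O_q(SL_2)}$ and $u_q(\mf{sl}_2)^*$ have dimension $n^3$, so any surjection is automatically an isomorphism. Once $\overline\phi$ is an isomorphism, the bilinear form realizes a perfect duality in the sense of Section~\ref{subsect:duality}: the map $\phi$ of that section is $\overline\phi$ itself (injective), and the companion map $\psi: u_q(\mf{sl}_2) \to \overline{\mc O_q(SL_2)}^*$ is forced to be injective by a second dimension count.

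To prove surjectivity, I would show that each dual basis element $p_{s,t,\ell}$ lies in the image of $\overline\phi$, using formula \eqref{eq:oqdualbasis}. Fix $s, t \in \{0,\ldots,n-1\}$; since $n$ is odd, the condition $[m]_q = 0$ forces $q^{2m}=1$ and hence $m \equiv 0 \pmod n$, so $[s]_q!\,[t]_q! \neq 0$ for $0 \leq s, t < n$. As $r$ ranges over $\{0, \ldots, n-1\}$, formula \eqref{eq:oqdualbasis} yields $n$ expressions of the images $\overline\phi(b^s c^t d^r) = B^s C^t D^r$ as $\kk$-linear combinations of the $n$ unknowns $\{p_{s,t,\ell}\}_{\ell=0}^{n-1}$, with coefficient matrix (up to the nonzero common factor $[s]_q!\,[t]_q!$)
\[
  M_{r,\ell} \;=\; q^{-\ell(r+s-t) - rs} \;=\; q^{-rs} \cdot \bigl(q^{-\ell}\bigr)^r \cdot q^{-\ell(s-t)}.
\]
This factors as $M = D_1 V D_2$, where $D_1$ and $D_2$ are invertible diagonal matrices (with entries $q^{-rs}$ and $q^{-\ell(s-t)}$, respectively) and $V_{r,\ell} = (q^{-\ell})^r$ is the Vandermonde matrix in the $n$ pairwise distinct values $q^{-\ell}$. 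Hence $M$ is invertible, and solving the linear system expresses each $p_{s,t,\ell}$ as a $\kk$-linear combination of elements of $\operatorname{Im}(\overline\phi)$. As $s,t,\ell$ vary, every dual basis element appears, so $\overline\phi$ is surjective and thus an isomorphism.

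The only real work is the verification of formula \eqref{eq:oqdualbasis}, which the excerpt attributes to ``elementary computations'' but which amounts to a careful induction using the comultiplication formulas for the generators $b, c, d$ together with the quantum binomial identities from Section~\ref{subsect:qsymbols}. Once \eqref{eq:oqdualbasis} is established, the remainder of the argument is pure linear algebra via the Vandermonde structure, so I do not anticipate further obstacles.
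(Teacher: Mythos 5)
Your proof is correct and follows essentially the same route as the paper: both take formula \eqref{eq:oqdualbasis} as the key input and then invert the linear system relating $\{\overline\phi(b^s c^t d^r)\}_{r}$ to the dual basis elements $\{p_{s,t,\ell}\}_{\ell}$ (after noting, as you do, that $[s]_q!\,[t]_q! \neq 0$ because $n$ is odd). The only difference is cosmetic: the paper writes down the explicit inverse via orthogonality of roots of unity, computing $\sum_{r=0}^{n-1} q^{(k-\ell)(r+s-t)} = n\,\delta_{k,\ell}$, whereas you establish invertibility abstractly through the Vandermonde factorization of the coefficient matrix --- the same fact in different clothing.
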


  \begin{proof}
    We show that each basis element $p_{i,j,k}$ of $u_q(\mf{sl}_2)$ is in the image of $\overline \phi$.
    In particular, for fixed integers $ 0 \leq s, t, k \leq n-1 $, we show that 
   \[
      n \ [s]_q! \ [t]_q! \ p_{s, t, k} = \sum _ { r = 0 }^{n-1} q^{ ( k+s ) r + ( s-t ) k  } B^s C^t D^r .
   \]      
   We compute via \eqref{eq:oqdualbasis}
   \begin{align*}
     \sum _{r=0} ^ {n-1} q^{(k+s)r + (s-t)k} B^s C^t D^r 
       &= \sum _{r=0} ^ {n-1} q^{(k+s)r + (s-t)k} [s]_q! [t]_q! \sum _ {\ell=0} ^{n-1} q^ { -\ell ( r + s - t ) - rs } p_{s, t, \ell}   \\
     &= [s]_q! [t]_q! \sum _{\ell=0} ^ {n-1} \left( \sum _ {r=0} ^ {n-1} q^{ (k-\ell) ( r + s - t ) } \right) p_{s, t, \ell} . 
    \end{align*}
    If $ k \neq \ell $, then since $ q ^ {k-\ell} $ is an $n^{th}$ root of unity not equal to 1, $ \sum_{r=0}^{n-1} q^{(k-\ell)(r+s-t)} = 0$. 
    On the other hand, if $ k = \ell $, then $ \sum_{r=0}^{n-1} q^{(k-\ell)(r+s-t)} = n $.
  \end{proof}
  
Now that we have established the fact that $u_q(\mf{sl}_2)^* \cong \overline{ \mc O_q(SL_2)}$, we can prove the following.

\begin{theorem} \label{thm:uqdouble}
  The Drinfel'd double $D(u_q(\mf{sl}_2))$ of $u_q(\mf{sl}_2)$ is generated as an algebra by $a, \ b, \ c, \ d, \ E, \ F, \ K$ subject to the relations
  \begin{gather*}
    a^n = d^n = K^n = 1, \quad
    b^n = c^n = E^n = F^n = 0, \\
    ba = qab, 
    \quad db = qbd, 
    \quad ca = qac, 
    \quad dc = qcd, 
    \quad bc = cb, 
    \quad ad = q^{-1} bc + 1, 
    \\
    KE = q^2 EK, \quad KF = q^{-2} FK, \quad EF - FE = \frac { K- K^{-1} } { q - q^{-1} } , \\
    Ka = aK, \quad 
       Kb = q^{-2}bK, \quad 
       Kc = q^2 cK, \quad
       Kd = dK, \\
    Ea = q^{-1}aE - q^{-1}c, \quad
       Eb = q^{-1}bE + q^{-1}aK - q^{-1} d, \quad
       Ec = qcE, \quad
       Ed = qdE + qcK, \\
    Fa = q^{-1}aF + b, \quad
       Fb = qbF, \quad
       Fc = q^{-1}cF = aK^{-1} + d, \quad
       Fd = qdF - q^2 b K^{-1}
  \end{gather*}
  The comultiplication and counit are given by 
  \begin{gather*}
     \Delta ( a ) = a \tens a + c \tens b , \quad
     \Delta ( b ) = b \tens a + d \tens b , \quad
     \Delta ( c ) = a \tens c + c \tens d , \quad
     \Delta ( d ) = b \tens c + d \tens d ,  \\
     \Delta ( K ) = K \tens K , \quad
     \Delta ( E ) = K \tens E + E \tens 1 , \quad
     \Delta ( F ) = 1 \tens F + F \tens K^{-1} ,    \\
     \epsilon ( a ) = \epsilon ( d ) = \epsilon ( K ) = 1 , \quad
     \epsilon ( b ) = \epsilon ( c ) = \epsilon ( E ) = \epsilon ( F ) = 0 .
  \end{gather*}
  The antipode is given by 
  \begin{gather*}
     S ( a ) = d, \quad 
     S(b) = -q^{-1} b, \quad 
     S(c) = -q c, \quad 
     S(d) = a , \quad
     S ( K ) = K^{-1}, \quad
     S(E) = -EK^{-1}, \quad
     S(F) = -KF .
  \end{gather*}
\end{theorem}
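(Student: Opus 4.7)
The presentation consists of three pieces: algebra generators and their relations, coalgebra structure on those generators, and antipode on those generators. My approach is to assemble ``internal'' data from the two embedded subalgebras $u_q(\mf{sl}_2)$ and $u_q(\mf{sl}_2)^{*cop}\cong \overline{\mc O_q(SL_2)}^{cop}$ (via Proposition~\ref{prop:uqsl2perfectdual}), and then to produce the ``mixed'' straightening relations via repeated application of~\eqref{eq:doublemixed}.

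For the first two ingredients: Lemma~\ref{lem:gens} gives that $D(u_q(\mf{sl}_2))$ is generated as an algebra by $K, E, F$ together with $a, b, c, d$, and each set satisfies the relations of its own factor since passing to $cop$ leaves the algebra structure unchanged. By \eqref{eq:doublecomult}, the coalgebra of $D(H)$ is the tensor-product coalgebra on $H^{*cop}\tens H$, so $\Delta$ and $\epsilon$ on $K,E,F$ come directly from $u_q(\mf{sl}_2)$ while on $a,b,c,d$ they are obtained by swapping the two tensor factors in $\Delta_{\mc O_q(SL_2)}$. The antipode comes from $S_{D(H)}(h) = S_H(h)$ for $h\in u_q(\mf{sl}_2)$ and $S_{D(H)}(p) = S_{H^*}^{-1}(p)$ for $p\in\overline{\mc O_q(SL_2)}$; the latter is computed using $S^2(b) = q^2 b$, $S^2(c) = q^{-2} c$, and $S^2(a)=a$, $S^2(d)=d$ to conclude $S^{-1}(b) = -q^{-1}b$, $S^{-1}(c) = -qc$, $S^{-1}(a)=d$, $S^{-1}(d)=a$.

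The crux of the proof is the derivation of the twelve mixed relations between each of $K, E, F$ and each of $a, b, c, d$. For each pair $(h,p)$ I would apply~\eqref{eq:doublemixed} in the form
\[
   hp \;=\; \langle \com{p}{1},\, S^{-1}(\com{h}{3})\rangle\,\langle \com{p}{3},\, \com{h}{1}\rangle\; \com{p}{2}\,\com{h}{2},
\]
expanding $\Delta^2(h)$ and $\Delta^2(p)$ (with the $cop$ convention on the latter) and evaluating pairings from Theorem~\ref{thm:uqduality}, namely $\langle a,K\rangle = q$, $\langle d,K\rangle = q^{-1}$, $\langle b,E\rangle = 1$, $\langle c,F\rangle = 1$, with all other generator-to-generator pairings zero. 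The asymmetric corrections in relations such as $Ea = q^{-1}aE - q^{-1}c$ and $Ed = qdE + qcK$ arise precisely from the cross-terms $c\tens b$ in $\Delta_{cop}(a)$ and $b\tens c$ in $\Delta_{cop}(d)$. To close the argument, I would verify that the listed relations suffice by a standard spanning argument: the mixed relations let one rewrite any word as $p\cdot h$ with $p$ a standard monomial in $a,b,c,d$ and $h$ a PBW monomial in $E, F, K$, which bounds the dimension of the algebra so presented by $n^6 = \dim D(u_q(\mf{sl}_2))$, forcing equality. The main obstacle is the bookkeeping in these twelve straightening calculations: no new conceptual content appears, but the $cop$ convention, the values $S^{-1}(E) = -q^{-2}EK^{-1}$ and $S^{-1}(F) = -q^2 KF$, and the cross-terms in $\Delta_{cop}(a)$ and $\Delta_{cop}(d)$ are all easy to misapply.
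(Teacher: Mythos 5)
Your proposal follows the paper's own proof essentially verbatim: Lemma~\ref{lem:gens} and \eqref{eq:doublecomult} give the internal relations, coalgebra structure, and antipode (your $S^{-1}(E)=-q^{-2}EK^{-1}$ and $S^{-1}(F)=-q^{2}KF$ agree with the paper's $-K^{-1}E$ and $-FK$), and the twelve mixed relations come from \eqref{eq:doublemixed} evaluated against the pairing of Proposition~\ref{prop:uqsl2perfectdual}; the closing dimension-count/spanning argument is a sensible addition that the paper leaves implicit. One concrete caution: in \eqref{eq:doublemixed} the Sweedler indices on $p$ are taken in $H^{*}$, not $H^{*cop}$ --- the paper accordingly expands $\Delta^2(a)=a\tens a\tens a+a\tens b\tens c+b\tens c\tens a+b\tens d\tens c$ --- so if you pair $\com{p}{1}$ with $S^{-1}(\com{h}{3})$ while expanding $\Delta^2(a)$ in the $cop$ convention, as your parenthetical suggests, the roles of $\com{p}{1}$ and $\com{p}{3}$ get swapped and you would obtain, e.g., $Ea=q^{-1}aE+q^{-1}cK$ rather than the correct $q^{-1}aE-q^{-1}c$.
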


As pointed out above, the generator $ a $ (or $d$) could be eliminated from the presentation, using the relation $ ad = q^{-1} b c + 1 $ and the fact that $a$ and $d$ are invertible. 
While doing so would significantly lower the number of relations, it would complicate both the relations between generators of $u_q(\mf{sl}_2)$ and $\overline{\mc O_q(SL_2)}$ and the comultiplication of the latter. 

  \begin{proof}[Proof of Theorem~\ref{thm:uqdouble}]
    The comultiplication and antipode and most of the relations of the generators follow from \eqref{eq:doublecomult} and Lemma~\ref{lem:gens}.
    For the relations involving elements of both $u_q(\mf{sl}_2)$ and its dual, we use \eqref{eq:doublemixed} and the perfect duality established in Proposition~\ref{prop:uqsl2perfectdual}.
    Note that 
    \begin{gather*}
       \Delta^2 ( a ) = a \tens a \tens a + a \tens b \tens c + b \tens c \tens a + b \tens d \tens c , \quad
       \Delta^2 ( b ) = a \tens a \tens b + a \tens b \tens d + b \tens c \tens b + b \tens d \tens d  ,  \\
       \Delta^2 ( c ) =   c \tens a \tens a + c \tens b \tens c + d \tens c \tens a + d \tens d \tens c  , \quad
       \Delta^2 ( d ) = c \tens a \tens b + c \tens b \tens d + d \tens c \tens b + d \tens d \tens d , \\
       \Delta^2 ( E ) = 1 \tens 1 \tens E + 1 \tens E \tens K + E \tens K \tens K  , \quad
       \Delta^2 ( K ) = K \tens K \tens K ,  \\
       \Delta^2 ( F ) = K^{-1} \tens K^{-1} \tens F + K^{-1} \tens F \tens 1 + F \tens 1 \tens 1 ,  \quad
       S^{-1} ( E ) = - K^{-1} E , \quad
       S^{-1} ( F ) = - F K .
    \end{gather*}
    For example, we have
    \begin{align*}
       Ea &= \langle \com a 1 , S^{-1} ( \com E 3 ) \rangle \langle \com a 3, \com E 1 \rangle \com a 2 \com E 2   \\
       &= \hspace{3mm} %
          \langle a, -K^{-1}E \rangle \langle a, 1 \rangle a 1 + %
          \langle a, K^{-1} \rangle \langle a, 1 \rangle a E + %
          \langle a, K^{-1} \rangle \langle a, E \rangle a K  \\
       & \hspace{4mm} +  %
          \langle a, -K^{-1}E \rangle \langle c, 1 \rangle b 1 + %
          \langle a, K^{-1} \rangle \langle c, 1 \rangle b E + %
          \langle a, K^{-1} \rangle \langle c, E \rangle b K  \\
       & \hspace{4mm} +  %
          \langle b, -K^{-1}E \rangle \langle a, 1 \rangle c 1 + %
          \langle b, K^{-1} \rangle \langle a, 1 \rangle c E + %
          \langle b, K^{-1} \rangle \langle a, E \rangle c K  \\
       & \hspace{4mm} +  %
          \langle b, -K^{-1}E \rangle \langle c, 1 \rangle d 1 + %
          \langle b, K^{-1} \rangle \langle c, 1 \rangle d E + %
          \langle b, K^{-1} \rangle \langle c, E \rangle d K     \\
       &= q^{-1} aE - q^{-1} c  .
    \end{align*}
    \begin{align*}
       Fa 
       &= \hspace{3mm} %
          \langle a, -FK \rangle \langle a, K^{-1} \rangle a K^{-1} + %
          \langle a, 1 \rangle \langle a, K^{-1} \rangle a F + %
          \langle a, 1 \rangle \langle a, F \rangle a 1  \\
       & \hspace{4mm} +  %
          \langle a, -FK \rangle \langle c, K^{-1} \rangle b K^{-1} + %
          \langle a, 1 \rangle \langle c, K^{-1} \rangle b F + %
          \langle a, 1 \rangle \langle c, F \rangle b 1  \\
       & \hspace{4mm} +  %
          \langle b, -FK \rangle \langle a, K^{-1} \rangle c K^{-1} + %
          \langle b, 1 \rangle \langle a, K^{-1} \rangle c F + %
          \langle b, 1 \rangle \langle a, F \rangle c 1  \\
       & \hspace{4mm} +  %
          \langle b, -FK \rangle \langle c, K^{-1} \rangle d K^{-1} + %
          \langle b, 1 \rangle \langle c, K^{-1} \rangle d F + %
          \langle b, 1 \rangle \langle c, F \rangle d 1     \\
       &= q^{-1} aF + b  .
    \end{align*}
    The rest of the relations follow similarly.
        
  \end{proof}

\bibliography{articlebib}

\end{document}